\newcommand{\nc}{\newcommand}
\nc{\browntext}[1]{\textcolor{brown}{#1}}
\nc{\greentext}[1]{\textcolor{green}{#1}}
\nc{\redtext}[1]{\textcolor{red}{#1}}
\nc{\bluetext}[1]{\textcolor{blue}{#1}}
\nc{\brown}[1]{\browntext{ #1}}
\nc{\green}[1]{\greentext{ #1}}
\nc{\red}[1]{\redtext{ #1}}
\nc{\blue}[1]{\bluetext{ #1}}
\nc{\zb}[1]{\redtext{From zb: #1}}
\newtheorem{thm}{Theorem}  [section]
\newtheorem{cor}[thm]{Corollary}
\newtheorem{lem}[thm]{Lemma}
\newtheorem{prop}[thm]{Proposition}
\newtheorem{definition}[thm]{Definition}
\newtheorem{example}[thm]{Example}
\theoremstyle{remark}
\newtheorem{rem}[thm]{Remark}
\numberwithin{equation}{section}
\newcommand{\cbinom}[2]{\left\{ \begin{matrix} #1\\#2 \end{matrix} \right\}}
\newcommand{\mbf}{\mathbf}
\newcommand{\mrm}{\mathrm}
\newcommand{\A}{\mathcal A}
\newcommand{\dvev}[1]{{\mathfrak{t}}_{\ev}^{{(#1)}}}
\newcommand{\dv}[1]{{\mathfrak{t}}_{\odd}^{{(#1)}}}
\newcommand{\dvd}[1]{t_{\odd}^{{(#1)}}}
\newcommand{\dvp}[1]{t_{\ev}^{{(#1)}}}
\newcommand{\ev}{\mrm{ev}}
\newcommand{\p}{\mathfrak p}
\newcommand{\g}{\mathfrak g}
\newcommand{\kk}{h}
\newcommand{\ka}{\kappa}
\newcommand{\la}{\lambda}
\newcommand{\LR}[2]{\left\llbracket \begin{matrix} #1\\#2 \end{matrix} \right\rrbracket}
\newcommand{\m}{\diamondsuit}
\newcommand{\mc}{\mathcal}
\newcommand{\mf}{\mathfrak}
\newcommand{\ms}{\mathscr}
\newcommand{\N}{\mathbb N}
\newcommand{\odd}{\mrm{odd}}
\newcommand{\one}{\mathbf 1}
\newcommand{\ov}{\overline}
\newcommand{\qbinom}[2]{\begin{bmatrix} #1\\#2 \end{bmatrix} }
\newcommand{\Q}{\mathbb Q}
\newcommand{\sll}{\mathfrak{sl}}
\newcommand{\U}{\mbf U}
\newcommand{\Udot}{\dot{\mbf U}}
\newcommand{\UAdot}{{}_\A{\dot{\mbf U}}}
\newcommand{\Ui}{{\mbf U}^\imath}
\newcommand{\vs}{\varsigma}
\newcommand{\vev}{v^+_{2\la} }
\newcommand{\vodd}{v^+_{2\la+1} }
\newcommand{\Y}{\check{E}}
\newcommand{\Z}{\mathbb Z}
\title[Formulae of $\imath$-divided powers, II]{Formulae of $\imath$-divided powers  in ${\bf U}_q(\mathfrak{sl}_2)$, II}
\author[Weiqiang Wang]{Weiqiang Wang}
\author[Collin Berman]{Collin Berman}
\address{ Department of Mathematics\\ University of Virginia\\ Charlottesville, VA 22904}
\email{ww9c@virginia.edu (Wang), cmb5nh@virginia.edu (Berman)}
\keywords{} 
\subjclass{}
\begin{document}

 \begin{abstract} 
 The coideal subalgebra of the quantum $\mathfrak{sl}_2$ is a polynomial algebra in a generator $t$ which depends on a parameter $\kappa$. The existence of the $\imath$-canonical basis (also known as the $\imath$-divided powers) for the coideal subalgebra of the quantum $\mathfrak{sl}_2$ were established by Bao and Wang. We establish closed formulae for the $\imath$-divided powers as polynomials in $t$ and also in terms of Chevalley generators of the quantum $\mathfrak{sl}_2$ when the parameter $\kappa$ is an arbitrary $q$-integer. The formulae were known earlier when $\kappa=0, 1$. 
\end{abstract}

\maketitle

\begin{quote}
{\em Dedicated to  Vyjayanthi Chari for her 60th birthday with admiration
}
\end{quote}

\setcounter{tocdepth}{1}
\tableofcontents

\section{Introduction}

%

A highlight in the theory of Drinfeld-Jimbo quantum groups is the construction of canonical bases by Lusztig and Kashiwara; cf.  \cite{Lu93}. Let $(\U, \Ui)$ be a quantum symmetric pair of finite type \cite{Le99} (also cf. \cite{BK19}).  A general theory of ($\imath$-)canonical bases arising from quantum symmetric pairs of finite type was developed recently by Bao and the first author in \cite{BW18b}, where $\imath$-canonical bases on based $\U$-modules and on the modified form of $\Ui$ were constructed.

One notable feature of the definition of quantum symmetric pairs is its dependence on parameters; see \cite{Le99, BK19, BW18b} where various conditions on parameters are imposed at different levels of generalities for various constructions. In \cite{BWW18} it is shown that the unequal parameters of type B Hecke algebras correspond under the $\imath$-Schur duality to certain specializations of parameters in the type AIII/AIV quantum symmetric pairs. 

%

\vspace{3mm}

For the remainder of the paper we let $\U$ be the quantum group of $\sll_2$ over $\Q(q)$ with generators $E, F, K^{\pm 1}$. Let $\Ui$ be the $\Q(q)$-subalgebra of $\U$ (with a parameter $\ka$), which is a polynomial algebra in  $t$, where 
\[
t = F+ q^{-1}EK^{-1}  +\ka K^{-1}. 
\]
Then $\Ui$ is a coideal subalgebra, and $(\U, \Ui)$ is an example of quantum symmetric pairs; cf. \cite{Ko93}. For the consideration of $\imath$-canonical bases (which will be referred to as $\imath$-divided powers from now on) the parameter $\ka$ is taken to be a bar invariant element in $\A =\Z[q,q^{-1}]$; cf. \cite{BW18b}. 

In contrast to the usual quantum $\sll_2$ case where the divided powers are simple to define, finding explicit formulae for the $\imath$-divided powers is a highly nontrivial problem. Closed formulae for the $\imath$-divided powers in $\Ui$ were known earlier only in two distinguished cases when $\ka=0$ or $1$ (cf. \cite{BeW18}); this verified a conjecture in \cite{BW18a} when $\ka=1$.

The goal of this paper is to establish closed formulae  for the $\imath$-divided powers in $\Ui$  as polynomials in $t$ and also viewed as elements in $\U$ (via the embedding of $\Ui$ to $\U$), when $\ka$ is an arbitrary $q$-integer which is clearly bar invariant. For arbitrary $\ov{\ka} =\ka \in \A$, we are able to present a closed formula only for the {\em second} $\imath$-divided power (note the first $\imath$-divided power is simply $t$ itself).

%

\vspace{3mm}

In contrast to the quantum group setting, there are {\em two} $\A$-forms, ${}_\A \Ui_{\rm ev}$ and ${}_\A \Ui_{\rm odd}$, for $\Ui$ corresponding to the parities $\{\rm ev, \rm odd\}$ of highest weights of finite-dimensional simple $\U$-modules \cite{BW18a, BW18b}. As a very special case of a main theorem in \cite{BW18b}, ${}_\A \Ui_{\rm ev}$ (and respectively, ${}_\A \Ui_{\rm odd}$) admits  $\imath$-canonical bases ($= \imath$-divided powers) for an arbitrary parameter $\overline{\ka} =\ka \in \A$, which are invariant with respect to a bar map (which fixes $t$ and hence is not a restriction of Lusztig's bar map on $\U$ to $\Ui$) and satisfy an asympototic compatibility with the $\imath$-canonical bases on finite-dimensional simple $\U$-modules. 

Computations by hand and by Mathematica have 
led us to make an ansatz for the formulae for the $\imath$-divided powers as polynomials in $t$ when $\ka$ is an arbitrary $q$-integer. In further discussions we need to separate the cases when $\ka$ is an even or odd $q$-integer, and let us restrict ourselves to the case for the $q$-integer $\ka$ being even in the remainder of the Introduction. Our ansatz is that the $\imath$-divided powers $\dvev{n}$ in ${}_\A \Ui_{\rm ev}$ and $\dv{n}$ in ${}_\A \Ui_{\rm odd}$ are given as follows: for $a\in \N$, 
\begin{align}
\label{def:idp:evevKa}
\dvev{n} = 
\begin{cases}
\frac{t}{[2a]!}  (t  - [-2a+2])(t -[-2a+4]) \cdots (t -[2a-4]) (t  - [2a-2]), & \text{if } n=2a, \\
\\
\frac{1}{[2a+1]!} (t  - [-2a])(t -[-2a+2]) \cdots (t -[2a-2]) (t  - [2a]), &\text{if } n=2a+1.
\end{cases}
\end{align}
\begin{align}
\label{def:dv:evoddK}
{\small \dv{n} = 
\begin{cases}
\frac{1}{[2a]!}   (t  - [-2a+1] ) (t  - [-2a+3]) \cdots (t  - [2a-3]) (t -[2a-1]), & \text{if } n=2a, \\
\\
\frac{t}{[2a+1]!}     (t  - [-2a+1] ) (t  - [-2a+3]) \cdots (t  - [2a-3]) (t -[2a-1]), &\text{if } n=2a+1.
\end{cases}
}
\end{align}
That is, the $\imath$-divided power formulas as polynomials in $t$ for $\ka$ being even $q$-integers are the same as for $\ka=0$  \cite{BW18a, BeW18}. 

To verify the above formulae \eqref{def:idp:evevKa}-\eqref{def:dv:evoddK} are indeed the $\imath$-canonical bases as defined and established in \cite{BW18b}, we need to verify 2 properties: (1) these polynomials in $t$ lie in the corresponding $\A$-forms of $\Ui$; (2) they satisfy the asymptotic compatibility with $\imath$-canonical bases on finite-dimensional simple $\U$-modules. 

To that end, we find the expansions for the polynomials in $t$ defined in \eqref{def:idp:evevKa}-\eqref{def:dv:evoddK}  in terms of $E, F, K^{\pm 1}$ of $\U$ (via the embedding $\Ui \to \U$); they are given by explicit triple sum formulae. Once these explicit formulae are found, they are verified by lengthy inductions. In the formulation of the expansion formulae, a sequence of degree $n$ polynomials $p^{(n)} (x) =p_n(x)/ [n]!$ which are defined recursively arise naturally; see \S \ref{subsec:pn}. The presence of $p^{(n)} (x)$ makes the formulation of the expansion formula and its proof much more difficult than the distinguished cases when $\ka=0$ or $1$ treated in \cite{BeW18}. 

The polynomials  $p^{(n)} (x)$ satisfy a crucial integrality property in the sense that $p^{(n)} (\ka)\in \A$ which leads to the integral property (1) above. To prove such an integrality we express these polynomials in terms of another sequence of polynomials (which are a reincarnation of the $\imath$-divided powers viewed as polynomials) with some $q^2$-binomial coefficients. Note that $p_0(0)=1$ and $p_n(0)=0$ for $n\ge 1$, our triple sum expansion formula reduces at $\ka=0$ to a double sum formula in \cite{BeW18}. 

When applying the expansion formulas for the $\imath$-divided powers to the highest weight vectors of the finite-dimensional simple $\U$-modules, we establish the asymptotic compatibility property (2) above explicitly. Note this form of compatibility cannot be made as strong as the compatibility between $\imath$-divided powers on $\Ui$ and simple $\U$-modules when $\ka=0$ or $1$ (as expected in \cite{BW18b}).

With Huanchen Bao's help, we compute the closed formulae for the second $\imath$-divided power for an arbitrary parameter $\ov{\ka} =\ka \in \A$; see Appendix~\ref{sec:genK}. It is an interesting open problem to find closed formulae for higher $\imath$-divided power with such an arbitrary parameter $\ka$. 
 
 %
\vspace{3mm}
The paper is organized as follows. There are 4 sections depending on the parities of the weights and of the parameter $\ka$. 

In Section~\ref{sec:evevK}, we recall the basics of the quantum symmetric pair $(\U, \Ui)$. Throughout the section we take $\ka$ to be an arbitrary even $q$-integer. We establish a key integral property of the polynomials  $p^{(n)} (x)$, and use it to formulate and establish the expansion formula for the $\imath$-divided powers $\dvev{n}$ in ${}_\A \Ui_{\rm ev}$. We prove that $\{\dvev{n} | n\ge 0 \}$ form an $\imath$-canonical basis for ${}_\A \Ui_{\rm ev}$ by showing $\dvev{n} \vev$ is an $\imath$-canonical basis element on the finite-dimensional simple $\U$-modules $L(2\la)$, for integers $\la \gg n$. 

In Section~\ref{sec:oddevK}, letting $\ka$ be an arbitrary even $q$-integer, we establish the expansion formula for the $\imath$-divided powers $\dv{n}$ in ${}_\A \Ui_{\rm odd}$. We prove that $\{\dv{n} | n \ge 0\}$ form an $\imath$-canonical basis for ${}_\A \Ui_{\rm odd}$ by showing $\dv{n}\vodd$ is an $\imath$-canonical basis element on the finite-dimensional simple $\U$-modules $L(2\la+1)$, for integers $\la \gg n$. 

In Section~\ref{sec:evoddK}, we take $\ka$ to be an arbitrary odd $q$-integer. We establish a key integral property of another sequence of polynomials  $\p^{(n)} (x)$, and use it to establish the expansion formula for the $\imath$-divided powers $\dvp{n}$ in ${}_\A \Ui_{\rm ev}$. We show that $\{\dvp{n} | n \ge 0\}$ form an $\imath$-canonical basis for ${}_\A \Ui_{\rm ev}$ and $\dvp{n}\vev$ is an $\imath$-canonical basis element on the finite-dimensional simple $\U$-modules $L(2\la)$, for integers $\la \gg n$.

In Section~\ref{sec:oddoddK}, we take $\ka$ to be an arbitrary odd $q$-integer. We establish the expansion formula for the $\imath$-divided powers $\dvd{n}$ in ${}_\A \Ui_{\rm odd}$. We show that $\{\dvd{n} | n \ge 0\}$ form an $\imath$-canonical basis for ${}_\A \Ui_{\rm odd}$ and $\dvd{n}\vodd$ is an $\imath$-canonical basis element on the finite-dimensional simple $\U$-modules $L(2\la+1)$, for integers $\la \gg n$.

In Appendix~\ref{sec:genK}, for arbitrary $\overline{\ka} =\ka \in \A$, we present closed formulae for the second $\imath$-divided powers in ${}_\A \Ui_{\rm ev}$ and ${}_\A \Ui_{\rm odd}$.

\vspace{.3cm}

{\bf Acknowledgement.} 
WW thanks Huanchen Bao for his insightful collaboration. The formula in the Appendix for the second $\imath$-divided power with arbitrary parameter $\ka$ (which was obtained with help from Huanchen) was crucial to this project, and to a large extent this paper grows by exploring for what values for the parameter $\ka$ reasonable formulae for higher divided powers can be obtained. The research of WW  and the undergraduate research of CB are partially supported by a grant from National Science Foundation. Mathematica was used intensively in this work.

\section{The $\imath$-divided powers $\dvev{n}$ for even weights and even $\ka$}
  \label{sec:evevK}

\subsection{The quantum $\sll_2$}

Recall  the quantum group $\U=\U_q(\mathfrak{sl}_2)$ is the $\Q(q)$-algebra generated by $F, E, K, K^{-1}$, subject to the relations:  
\[
KK^{-1}=K^{-1}K=1,
 \;
 EF -FE =\frac{K-K^{-1}}{q-q^{-1}}, 
 \;
 K E =q^2 E K, 
 \;
 K F=q^{-2} FK.
\]
There is an anti-involution $\vs$ of the $\Q$-algebra $\U$:
\begin{align}
 \label{eq:vs}
\vs: \U \longrightarrow \U,
\qquad E\mapsto E, \quad F\mapsto F, \quad K\mapsto K, \quad q\mapsto q^{-1}.
\end{align}

 Let $\A =\Z[q,q^{-1}]$ and $\ka \in \A$.  Set
\begin{equation}  \label{eq:Y}
\Y :=q^{-1}EK^{-1}, 
\qquad
\kk :=\frac{K^{-2}-1}{q^2-1}.
\end{equation}

  Define, for $a\in \Z, n\ge 0$, 
\begin{equation}  \label{kbinom}
\qbinom{\kk;a}{n} =\prod_{i=1}^n \frac{q^{4a+4i-4} K^{-2} -1}{q^{4i} -1},
\qquad 
[\kk;a]= \qbinom{\kk;a}{1}.
\end{equation}
Then we have, for $a\in \Z, n\in \N$, 
\begin{align}
  \label{FEk}
  \begin{split}
F \Y  - & q^{-2} \Y  F =\kk,
\\
\qbinom{\kk;a}{n} F = F \qbinom{\kk;a+1}{n}, &
\qquad \qbinom{\kk;a}{n} \Y  =\Y  \qbinom{\kk;a-1}{n}.
\end{split}
\end{align}
It follows by definition that
\begin{equation}  \label{kbinom2}
\qbinom{\kk;a}{n} =\qbinom{\kk;1+a}{n}
 -q^{4a}K^{-2} \qbinom{\kk;1+a}{n-1}.
\end{equation}

The following formula holds for $n\ge 0$:  
\begin{align}   \label{FYn}
F \Y^{(n)} &=q^{-2n} \Y^{(n)} F +  \Y^{(n-1)} \frac{q^{3-3n} K^{-2} - q^{1-n} }{q^2-1}.
\end{align} 

%


Let 
\[
\Y^{(n)} =\Y^n/[n]!, \quad
F^{(n)} =F^n/[n]!,\quad \text{ for } n\ge 1.
\] 
Then $\Y^{(n)}= q^{-n^2} E^{(n)} K^{-n}$. It is understood that $\Y^{(n)}=0$ for $n<0$ and $\Y^{(0)}=1$.

%
%
\subsection{The coideal subalgebra $\Ui$}

Recall $\Y$ from \eqref{eq:Y}. Let 
\begin{align}
  \label{eq:t}
t &=F + \Y +\ka K^{-1}. 
\end{align}
Denote by $\Ui$ the $\Q(q)$-subalgebra of $\U$ generated by $t$. Then $\Ui$ is a coideal subalgebra of $\U$ and $(\U, \Ui)$ forms a quantum symmetric pair \cite{Ko93, Le99}; also cf. \cite{BW18b}.

Denote by $\Udot$ the modified quantum group of $\sll_2$ \cite{Lu93}, which is the $\Q(q)$-algebra generated by $E\one_\la, F\one_\la$ and the  idempotents $\one_\la$, for $\la\in \Z$. Let $\UAdot$ (respectively, $\UAdot_{\rm{ev}}$, $\UAdot_{\text{odd}}$) be the $\A$-subalgebra of $\Udot$  generated by $E^{(n)} \one_\la, F^{(n)} \one_\la, \one_\la$, for all $n\ge 0$ and for $\la\in \Z$ (respectively, for $\la$ even, for $\la$ odd). Note $\UAdot =\UAdot_{\rm{ev}} \oplus \UAdot_{\text{odd}}$.
There is a natural left action of $\U$ (and hence $\Ui$) on $\Udot$ such that $K \one_\la =q^\la \one_\la$. 
For $\mu\in \N$, denote by $L(\mu)$ the finite-dimensional simple $\U$-module of highest weight $\mu$, and denote by $L(\mu)_\A$ its Lusztig $\A$-form. 

Following \cite{BW18b}, we introduce the following $\A$-subalgebras of $\Ui$, for ${\rm p}\in \{ {\rm ev}, {\rm odd} \}$: 
\begin{align*}
{}_\A \Ui_{\rm p}
&=\{x\in \Ui~|~x u \in \UAdot_{\rm p}, \forall u\in \UAdot_{\rm p} \}
\\
&=\big \{x\in \Ui~|~x v \in L(\mu)_\A, \forall v\in L(\mu)_\A, \forall {\rm p}\equiv\mu\pmod{2} \big \}. 
\end{align*}
By \cite{BW18b}, ${}_\A \Ui_{\rm p}$ is a free $\A$-submodule of $\Ui$ such that $\Ui =\Q(q) \otimes_\A {}_\A \Ui_{\rm p}$, for ${\rm p}\in \{ {\rm ev}, {\rm odd} \}$. 

\subsection{Definition of $\dvev{n}$ for even $\ka$}

In this section~\ref{sec:evevK} we shall always take $\ka$ to be an even $q$-integer, i.e., 
\begin{align}  
   \label{eq:evenK}
\ka & =[2\ell], \quad  \text{ for } \ell  \in \Z.
\end{align}
We shall take the following as a definition of the $\imath$-divided powers $\dvev{n}$. 

\begin{definition}
Set $\dvev{1} =t = F +\Y +\ka K^{-1}$. The divided powers $\dvev{n}$, for $n\ge 1$, are defined by the recursive relations: 
\begin{align}   
  \label{eq:tt}
\begin{split}
t \cdot \dvev{2a-1} &=[2a] \dvev{2a},
\\
t \cdot \dvev{2a} &=  [2a+1] \dvev{2a+1} +   [2a] \dvev{2a-1}, \quad \text{ for } a\ge 1.
\end{split}
\end{align}   
Equivalently, $\dvev{n}$ is defined by the closed formula  \eqref{def:idp:evevKa}. 
\end{definition}
The bar involution $\psi_\imath$ on $\Ui$, which fixed $t$ and sends $q\mapsto q^{-1}$, clearly fixes the above $\imath$-divided powers. 

The following is a variant of \cite[Lemma~2.2]{BeW18} (where $\ka=0$) with the same proof.

\begin{lem} 
  \label{lem:anti}
The anti-involution $\vs$ on $\U$ 
sends $
F\mapsto F,  \Y \mapsto \Y,  K^{-1} \mapsto K^{-1},  q \mapsto q^{-1}$;  in addition, $\vs$ sends 
\[
\kk\mapsto -q^{2} \kk, 
\quad
\dvev{n} \mapsto \dvev{n}, 
\quad
\qbinom{\kk;a}{n}\mapsto (-1)^n q^{2n(n+1)} \qbinom{\kk;1-a-n}{n}, \; \forall a\in\Z, n\in\N.
\] 
\end{lem}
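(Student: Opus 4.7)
\medskip

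The plan is to verify each assertion in the order listed, since each depends on the earlier ones only through the definition of $\vs$ as an anti-involution.

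For the action on generators, I would first note that $\vs(F)=F$ is immediate from the definition. For $\vs(K^{-1})$, applying $\vs$ to $KK^{-1}=1$ and using that $\vs(K)=K$ gives $\vs(K^{-1})=K^{-1}$. For $\vs(\Y)$, compute directly: since $\vs$ reverses order and $\Y=q^{-1}EK^{-1}$,
\[
\vs(\Y)=\vs(q^{-1})\vs(K^{-1})\vs(E)=qK^{-1}E,
\]
and then use the defining relation $KE=q^2EK$, i.e.\ $K^{-1}E=q^{-2}EK^{-1}$, to rewrite this as $q\cdot q^{-2}EK^{-1}=\Y$.

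For $\vs(\kk)$, plug directly into the definition \eqref{eq:Y}: $\vs(\kk)=(K^{-2}-1)/(q^{-2}-1)$, and factor $q^{-2}-1=-q^{-2}(q^2-1)$ to obtain $-q^2\kk$. For the $q$-binomial, I would apply $\vs$ factor-by-factor to \eqref{kbinom}:
\[
\vs\!\left(\qbinom{\kk;a}{n}\right)=\prod_{i=1}^n\frac{q^{-4a-4i+4}K^{-2}-1}{q^{-4i}-1}.
\]
The denominator simplifies via $q^{-4i}-1=-q^{-4i}(q^{4i}-1)$, producing an overall factor $(-1)^n q^{-2n(n+1)}$ in the denominator, i.e.\ $(-1)^n q^{2n(n+1)}$ out front. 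For the numerators I would reindex by $j=n-i+1$ and recognize the resulting product as exactly $\qbinom{\kk;1-a-n}{n}$. The main obstacle here is purely bookkeeping: confirming that the exponent $-4a-4i+4$ lines up with $4(1-a-n)+4j-4$ after the substitution.

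Finally, for $\vs(\dvev{n})=\dvev{n}$ I would argue as follows. The parameter $\ka=[2\ell]$ is a $q$-integer and hence bar invariant, so $\vs(\ka)=\ka$; combining with the three generator computations gives $\vs(t)=F+\Y+\ka K^{-1}=t$. Now \eqref{def:idp:evevKa} expresses $\dvev{n}$ as a polynomial in the single element $t$, and the coefficients (products of $[j]$'s and inverses of $[2a]!$) are bar invariant in $\Q(q)$ because the factors $t-[-j]$ and $t-[j]$ occur together (using $[-j]=-[j]$) so the expanded coefficients are symmetric functions of bar-invariant scalars. Since $\vs$ restricted to $\Q(q)$ is the bar map and $\vs(t)=t$, applying $\vs$ to the polynomial $\dvev{n}$ leaves it fixed, completing the lemma. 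This final step is exactly the point where the argument differs from the $\ka=0$ case of \cite{BeW18}: it is precisely the bar invariance of $\ka=[2\ell]$ that keeps $t$, and hence the polynomials $\dvev{n}$, fixed by $\vs$.
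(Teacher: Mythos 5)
Your proof is correct, and it is exactly the direct verification that the paper has in mind: the paper gives no argument of its own, simply citing \cite[Lemma~2.2]{BeW18} ``with the same proof,'' and your computation (order-reversal on $\Y$, factor-by-factor action on $\qbinom{\kk;a}{n}$ with the reindexing $j=n-i+1$, and $\vs(t)=t$ from the bar invariance of $\ka=[2\ell]$ feeding into the polynomial formula \eqref{def:idp:evevKa}) supplies precisely the omitted details. The only cosmetic point is that the pairing of $t-[j]$ with $t-[-j]$ is unnecessary, since each $q$-integer $[j]$ is already individually fixed by $q\mapsto q^{-1}$.
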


\subsection{Polynomials $p_n(x)$ and $p^{(n)}(x)$}
 \label{subsec:pn}
 
\begin{definition}
For $n\in \N$, the monic polynomial $p_n(x)$ in $x$ of degree $n$ is defined as
\begin{align}
 \label{def:pn}
p_{n+1} =x p_n  + q^{1-2n} [n] [n-1] p_{n-1}, \qquad  p_0 =1.
\end{align}
\end{definition}
Also set $p_n =0$ for all $n<0.$
Note that $p_n$ is an odd polynomial for $n$ odd while it is an even polynomial for $n$ even. These polynomials $p_n$ will appear in the expansion formula for the $\imath$-divided powers in $\U$. 

\begin{example}
Here are $p_n(x)$, for the first few $n\ge 0$:
\begin{align*}
p_0& =1, \qquad p_1 =x,
\qquad 
p_2=x^2,
\qquad
p_3=x^3 +(q^{-4} +q^{-2})x,
\\
p_4&=x^4 + (q^{-4} +q^{-2})(q^{-4} +q^{-2}+2)x^2,
\\
p_5&=x^5 + (q^{-4} +q^{-2}) (q^{-8}+q^{-6}+3q^{-4} +2q^{-2}+3)x^3
\\
& \qquad \qquad + (q^{-4} +q^{-2})^2 (q^{-8}+q^{-6}+2q^{-4} +q^{-2}+1)x.
\end{align*}
\end{example}

Introduce the monic polynomial of degree $n$:
\begin{align*}
g_n(x) =
\begin{cases}
\prod_{i=0}^{m-1} (x^2-[2i]^2), & \text{ if } n=2m \text{ is even};
\\
x \prod_{i=1}^{m} (x^2-[2i]^2), & \text{ if } n=2m+1 \text{ is odd}.
\end{cases}
\end{align*}
Define 
\begin{equation}  \label{eq:divpg}
p^{(n)}(x) =p_n(x)/[n]!, \qquad
g^{(n)}(x) =g_n(x)/[n]!. 
\end{equation} 
The recursion for $p_n$ can be rewritten as
\begin{equation}   \label{eq:f(n)}
[n+1]p^{(n+1)} =x p^{(n)}  + q^{1-2n} [n-1] p^{(n-1)}.
\end{equation}

Our next goal is to show that $p^{(n)}([2\ell]) \in \A$ for all $\ell \in \Z$; cf. Proposition~\ref{prop:fg:ev}. This is carried out by relating $p^{(n)}$ to $g^{(n)}$ for various $n$, and showing that $g^{(n)}([2\ell]) \in \A$ for all $\ell \in \Z$.

\begin{lem} \label{lem:ev:g-integral}
For $\ka =[2\ell]$ with $\ell \in \Z$ (see \eqref{eq:evenK}), we have $g^{(n)}(\ka) \in \A$, for all $n\ge 0$. 
\end{lem}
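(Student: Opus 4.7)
The plan is to obtain a closed-form evaluation of $g^{(n)}([2\ell])$ and exhibit it as a manifestly $\A$-integral expression built from $q$-binomial coefficients and the elements $q^j+q^{-j}$. I would first note that $g_n(x)$ is either even or odd as a polynomial in $x$, so that $g^{(n)}([-2\ell]) = \pm g^{(n)}([2\ell])$, reducing to $\ell\ge 0$. For $0\le \ell < m$ (when $n=2m$) or $0\le\ell\le m$ (when $n=2m+1$) the defining product for $g_n([2\ell])$ contains a vanishing factor $[2\ell]^2-[2i]^2$, so $g^{(n)}([2\ell]) = 0\in\A$ trivially; the remaining range is $\ell\ge m$ (even $n$) or $\ell\ge m+1$ (odd $n$).

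The first step is to apply the identity $[a]^2 - [b]^2 = [a+b][a-b]$ to each factor $[2\ell]^2-[2i]^2 = [2\ell+2i][2\ell-2i]$. After collecting terms this collapses the defining product to
\[
g_{2m+1}([2\ell]) = \prod_{j=\ell-m}^{\ell+m}[2j], \qquad g_{2m}([2\ell]) = [2\ell]\prod_{j=\ell-m+1}^{\ell+m-1}[2j].
\]
The second step is to use $[2j] = [j](q^j+q^{-j})$ to split each product into an ordinary-$q$-integer part and a factor manifestly in $\A$. Dividing by $[n]!$, the ordinary part telescopes to a standard $q$-binomial coefficient. In the odd case this works cleanly and yields
\[
g^{(2m+1)}([2\ell]) = \qbinom{\ell+m}{2m+1}\prod_{j=\ell-m}^{\ell+m}(q^j+q^{-j}) \in \A.
\]

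The main obstacle is the even case, where the extra factor $[2\ell]$ leaves behind an awkward ratio $[2\ell]/[2m]$ that is not individually in $\A$:
\[
g^{(2m)}([2\ell]) = \frac{[2\ell]}{[2m]}\qbinom{\ell+m-1}{2m-1}\prod_{j=\ell-m+1}^{\ell+m-1}(q^j+q^{-j}).
\]
To push this into $\A$, I would split $[2\ell]$ via the addition law $[a+b] = q^b[a]+q^{-a}[b]$ with $(a,b) = (\ell+m,\ell-m)$, giving $[2\ell] = q^{\ell-m}[\ell+m]+q^{-(\ell+m)}[\ell-m]$, and then absorb each summand into the $q$-binomial using the two incarnations of the identity $[n]\qbinom{n-1}{k-1} = [k]\qbinom{n}{k} = [n-k+1]\qbinom{n}{k-1}$. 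Taking $(n,k) = (\ell+m,2m)$ gives $[\ell+m]\qbinom{\ell+m-1}{2m-1} = [2m]\qbinom{\ell+m}{2m}$, while $(n,k) = (\ell+m-1,2m)$ in the second form gives $[\ell-m]\qbinom{\ell+m-1}{2m-1} = [2m]\qbinom{\ell+m-1}{2m}$. Combining,
\[
\frac{[2\ell]}{[2m]}\qbinom{\ell+m-1}{2m-1} = q^{\ell-m}\qbinom{\ell+m}{2m} + q^{-(\ell+m)}\qbinom{\ell+m-1}{2m} \in \A,
\]
which finishes the even case and thus the lemma.
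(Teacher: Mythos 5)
Your proof is correct. In the odd case you arrive at exactly the paper's computation: the factorization $[2\ell]^2-[2i]^2=[2\ell+2i][2\ell-2i]$ followed by $[2j]=[j](q^j+q^{-j})$ is the same as the paper's single identity $\ka-[2i]=[\ell-i](q^{\ell+i}+q^{-\ell-i})$ applied to the linear factors of $g_{2m+1}$, and you land on the identical closed form $\qbinom{\ell+m}{2m+1}\prod(q^{\ell+i}+q^{-\ell-i})$. Where you genuinely diverge is the even case. The paper disposes of the extra factor of $\ka$ by the split $\ka=(\ka-[2m])+[2m]$: the first piece extends the product to a clean $2m$-fold product divisible by $[2m]!$, and the second piece cancels one factorial and reduces to the already-proved odd case, giving $g^{(2m)}(\ka)=\qbinom{\ell+m-1}{2m}\prod_{i=1-m}^{m}(q^{\ell+i}+q^{-\ell-i})+g^{(2m-1)}(\ka)$. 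You instead keep the awkward ratio $[2\ell]/[2m]$ and resolve it by the addition law $[2\ell]=q^{\ell-m}[\ell+m]+q^{-(\ell+m)}[\ell-m]$ together with the Pascal-type absorptions $[\ell+m]\qbinom{\ell+m-1}{2m-1}=[2m]\qbinom{\ell+m}{2m}$ and $[\ell-m]\qbinom{\ell+m-1}{2m-1}=[2m]\qbinom{\ell+m-1}{2m}$; both identities check out. Your route buys a fully explicit closed formula for $g^{(2m)}([2\ell])$ with no recursion, at the cost of the preliminary reduction to $\ell\ge m$ (needed to justify the factorial quotient $[\ell+m-1]!/[\ell-m]!$), whereas the paper's version is uniform in $\ell\in\Z$ and shorter because it recycles the odd case. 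Both are complete proofs.
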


\begin{proof} 
We separate the cases for $n=2m+1$ and $n=2m$. 
Noting that
\begin{equation}  \label{eq:A}
\ka -[2i] =[2\ell]-[2i] = [\ell-i] (q^{\ell+i} +q^{-\ell-i}),
\end{equation} 
we have
 \begin{align*}
 g^{(2m+1)}(\ka) &= \frac1{[2m+1]!} \prod_{i=-m}^{m} (\ka-[2i])
 = 
 \qbinom{\ell+m}{2m+1} \prod_{i=-m}^{m}(q^{\ell+i} +q^{-\ell-i} ) \in \A.
 \end{align*}

Similarly, we have
 \begin{align*}
 g^{(2m)}(\ka) &= \frac1{[2m]!} \ka \prod_{i=1-m}^{m-1} (\ka-[2i])
 \\
 &= \frac1{[2m]!} \prod_{i=1-m}^{m} (\ka-[2i]) + \frac1{[2m-1]!}  \prod_{i=1-m}^{m-1} (\ka-[2i])
 \\
 &=\qbinom{\ell+m-1}{2m} \prod_{i=1-m}^{m}(q^{\ell+i} +q^{-\ell-i} ) 
  + g^{(2m-1)}(\ka)  \in \A.
  \end{align*}
  The lemma is proved. 
\end{proof}

\begin{prop}
 \label{prop:fg:ev}
 For $m \in \Z_{\ge 1}$, we have
\begin{align*}
p^{(2m)} &= \sum_{a=0}^{m-1} q^{(1-2m)a} \qbinom{m-1}{a}_{q^2} g^{(2m-2a)},
\\
p^{(2m-1)} &= \sum_{a=0}^{m-1} q^{(3-2m)a} \qbinom{m-1}{a}_{q^2} g^{(2m-2a-1)}.
\end{align*}
In particular, we have $p^{(n)} (\ka) \in \A$, for all $\ka =[2\ell]$ with $\ell \in \Z$ and all $n\ge 0$. 
\end{prop}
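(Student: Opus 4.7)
The plan is to prove both expansion formulae by simultaneous induction on $m$; once established, the integrality claim $p^{(n)}(\ka) \in \A$ for $\ka = [2\ell]$ follows immediately from Lemma~\ref{lem:ev:g-integral} together with $\qbinom{m-1}{a}_{q^2} \in \A$. The base cases read $p^{(1)} = x = g^{(1)}$ and $p^{(2)} = x^2/[2]! = g^{(2)}$, matching the claim.

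As a preliminary step, I would derive two recursions for $g^{(n)}$ directly from the product definition. The identity $g_{2k+2}(x) = x^2 \prod_{i=1}^{k}(x^2 - [2i]^2) = x \cdot g_{2k+1}(x)$ yields $x g^{(2k+1)} = [2k+2]\, g^{(2k+2)}$, while writing $x^2 = (x^2 - [2k]^2) + [2k]^2$ inside $g_{2k} = x^2 \prod_{i=1}^{k-1}(x^2 - [2i]^2)$ gives $x g_{2k} = g_{2k+1} + [2k]^2 g_{2k-1}$, hence $x g^{(2k)} = [2k+1]\, g^{(2k+1)} + [2k]\, g^{(2k-1)}$. The induction step then proceeds by applying \eqref{eq:f(n)} with $n = 2m$ and with $n = 2m+1$, substituting the inductive hypotheses for $p^{(2m-1)}$, $p^{(2m)}$, and $p^{(2m+1)}$, replacing each resulting $x g^{(j)}$ by the two $g$-recursions above, and collecting the coefficient of each $g^{(j)}$ on both sides. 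Matching the result against the proposed formula reduces to the $q^2$-Pascal identity
\[
\qbinom{m}{b}_{q^2} = \qbinom{m-1}{b}_{q^2} + q^{2(m-b)} \qbinom{m-1}{b-1}_{q^2},
\]
supplemented by the standard $q$-integer identities $[a+b] = q^a[b] + q^{-b}[a]$ at the boundary indices.

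I expect the main obstacle to be the bookkeeping of the $q$-powers: the inductive exponents $q^{(1-2m)a}$ (for $p^{(2m)}$) and $q^{(3-2m)a}$ (for $p^{(2m-1)}$) differ by $2a$, and the factor $q^{1-2n}$ from \eqref{eq:f(n)} combined with the $q^{2(m-b)}$ produced by the $q^2$-Pascal identity must reproduce precisely the exponents appearing in the formulae for $p^{(2m+1)}$ and $p^{(2m+2)}$. The boundary indices $a=0$ and $a=m-1$, where only one of the two terms of the $g$-recursion contributes to a given $g^{(j)}$, should be treated separately from the generic interior terms; a representative sanity check is that the coefficient of $g^{(1)}$ in $[2m+1] p^{(2m+1)}$ collapses to the identity $[2m+1] = q^{2m-1}[2] + q^{-2}[2m-1]$. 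Once these $q$-power matches are verified across all indices, the two expansion formulae hold, and the integrality of $p^{(n)}(\ka)$ is an immediate consequence of Lemma~\ref{lem:ev:g-integral}.
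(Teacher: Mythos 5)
Your proposal follows essentially the same route as the paper's own proof: induction on $m$ using the recursion \eqref{eq:f(n)} for $p^{(n)}$ together with the two recursions $x\, g^{(2a-1)}=[2a]\,g^{(2a)}$ and $x\, g^{(2a)}=[2a+1]\,g^{(2a+1)}+[2a]\,g^{(2a-1)}$ (which the paper states and you correctly derive from the product form of $g_n$), followed by an index shift and $q^2$-Pascal plus $q$-integer identities to match coefficients of each $g^{(j)}$, with integrality then immediate from Lemma~\ref{lem:ev:g-integral}. The plan and its key ingredients are correct, and your boundary sanity check $[2m+1]=q^{2m-1}[2]+q^{-2}[2m-1]$ does hold.
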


\begin{proof}  
 It follows from  these formulae for $p^{(n)}$ and Lemma~\ref{lem:ev:g-integral} that $p^{(n)}([2\ell]) \in \A$.

It remains to prove these formulae. Recall
\begin{align}  \label{recursive:f}
 x \cdot p^{(n)} = [n+1] p^{(n+1)}  -  q^{1-2n} [n-1] p^{(n-1)}, \quad \text{ for } n \ge 1.
\end{align}
Also recall
\begin{align}  \label{recursive:g}
\begin{split}
x \cdot g^{(2a-1)} &=[2a] g^{(2a)},
\\
x \cdot g^{(2a)} &=  [2a+1] g^{(2a+1)} +   [2a] g^{(2a-1)}, \quad \text{ for } a\ge 1.
\end{split}
\end{align}

We prove the formulae by induction on $m$, with the base cases for $p^{(1)}$ and $p^{(2)}$ being clear. We separate in two cases. 

(1). Let us prove the formula for $p^{(2m+1)}$, assuming the formulae for $p^{(2m-1)}$ and $p^{(2m)}$.
By \eqref{recursive:f}--\eqref{recursive:g} we have
\begin{align*}
[2m+1]p^{(2m+1)} 
&= x p^{(2m)} +q^{1-4m} [2m-1] p^{(2m-1)}  \\
&= \sum_{a=0}^{m-1}q^{(1-2m)a} \qbinom{m-1}{a}_{q^2} x \cdot g^{(2m-2a)} \\
&\qquad
  +\sum_{a=0}^{m-1} q^{(3-2m)a+1-4m} [2m-1] \qbinom{m-1}{a}_{q^2} g^{(2m-2a-1)}
 \\
&= \sum_{a=0}^{m-1}q^{(1-2m)a} \qbinom{m-1}{a}_{q^2}
      \left( [2m-2a+1]g^{(2m-2a+1)} +[2m-2a]g^{(2m-2a-1)}
    \right) \\
&\qquad
  +\sum_{a=0}^{m-1} q^{(3-2m)a+1-4m} [2m-1] \qbinom{m-1}{a}_{q^2} g^{(2m-2a-1)}.
\end{align*}
By combining the like terms for $g^{(2m-2a-1)}$ above and using the identity
\[
q^{(1-2m)a} [2m-2a] +q^{(3-2m)a+1-4m} [2m-1] =q^{(1-2m)(a+1)} [4m-2a-1], 
\]
we obtain
\begin{align*}
[2m+1]p^{(2m+1)} 
&= \sum_{a=0}^{m-1}q^{(1-2m)a}  [2m-2a+1] \qbinom{m-1}{a}_{q^2}  g^{(2m-2a+1)} 
     \\
&\qquad
  +\sum_{a=0}^{m-1} q^{(1-2m)(a+1)} [4m-2a-1] \qbinom{m-1}{a}_{q^2} g^{(2m-2a-1)}
  \\ %
&= [2m+1]\sum_{a=0}^{m}q^{(1-2m)a} \qbinom{m}{a}_{q^2} g^{(2m-2a+1)}, 
\end{align*}
where the last equation results from shifting the second summation index from $a\to a-1$ and using
the identity
\[
[4m-2a+1] \qbinom{m-1}{a}_{q^2} +q^{(1-2m)a} [4m-2a+1] \qbinom{m-1}{a-1}_{q^2}
=[2m+1] \qbinom{m}{a}_{q^2}. 
\]

(2) We shall prove the formula for $p^{(2m+1)}$, assuming the formulae for $p^{(2m-1)}$ and $p^{(2m)}$.
By \eqref{recursive:f}--\eqref{recursive:g} we have
\begin{align*}
&[2m+2]p^{(2m+2)} 
\\
&= x p^{(2m+1)} +q^{-1-4m} [2m] p^{(2m)}  \\
&= \sum_{a=0}^{m}q^{(1-2m)a} \qbinom{m}{a}_{q^2} x \cdot g^{(2m-2a+1)}
 +\sum_{a=0}^{m-1} q^{(1-2m)a-1-4m} [2m] \qbinom{m-1}{a}_{q^2} g^{(2m-2a)}
 \\
&= \sum_{a=0}^{m} q^{(1-2m)a} [2m-2a+2] \qbinom{m}{a}_{q^2}  g^{(2m-2a+2)}
 +\sum_{a=0}^{m-1} q^{(1-2m)a-1-4m} [2m] \qbinom{m-1}{a}_{q^2} g^{(2m-2a)}
 \\
 &= [2m+2] \sum_{a=0}^{m} q^{(-1-2m)a} \qbinom{m}{a}_{q^2} g^{(2m-2a+2)},
 \end{align*}
where the last equation results from shifting the second summation index from $a\to a-1$ and 
 using the following identity
 \[
 [2m-2a+2] \qbinom{m}{a}_{q^2} 
+ q^{-2-2m} [2m] \qbinom{m-1}{a-1}_{q^2}
=  q^{-2a} [2m+2] \qbinom{m}{a}_{q^2}. 
 \]
The proposition is proved.
\end{proof}

\begin{rem}
\label{rem:positive-p}
Note that $p_n(x) \in \N [q^{-1}] [x]$ for all $n$. Therefore, for all non-negative even $q$-integer $\ka$, we have $p_n (\ka) \in \N [q,q^{-1}]$. 
\end{rem}

\subsection{Formulae for $\dvev{n}$ with even $\ka$}

Recall $p^{(n)}(x)$ from \eqref{def:pn}--\eqref{eq:divpg}. 
\begin{thm}   
 \label{thm:dvev:evKappa}
Assume $\ka$ is an even $q$-integer as in \eqref{eq:evenK}. Then we have, for $m\ge 1$,  
\begin{align}
\dvev{2m} &= 
\sum_{b=0}^{2m} \sum_{a=0}^{b} \sum_{c \geq 0} q^{\binom{2c}{2} -b(2m-b-2c) -a(b-a)} 
p^{(2m-b-2c)}(\kappa) 
\label{t2m:evev2} \\
&\qquad \qquad \qquad\quad \cdot \Y^{(a)}  \qbinom{\kk;1-m}{c} K^{b-2m+2c} F^{(b-a)}, 
\notag
\\
\dvev{2m-1} &= 
\sum_{b=0}^{2m-1} \sum_{a=0}^{b} \sum_{c \geq 0} q^{\binom{2c}{2}+2c -b(2m-b-2c-1) -a(b-a)}
 p^{(2m-b-2c-1)}(\kappa) 
\label{t2m-1:evev2}
\\
&\qquad\qquad\qquad\quad \cdot \Y^{(a)}  \qbinom{\kk;1-m}{c} K^{b-2m+2c+1} F^{(b-a)}. 
\notag
\end{align}
\end{thm}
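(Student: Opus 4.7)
The plan is to prove the two formulas \eqref{t2m:evev2}--\eqref{t2m-1:evev2} simultaneously by induction on $m$, using the defining recurrences $t\cdot \dvev{2a-1} = [2a] \dvev{2a}$ and $t\cdot \dvev{2a} = [2a+1]\dvev{2a+1} + [2a]\dvev{2a-1}$ from \eqref{eq:tt}. The base case is $\dvev{1} = t = F + \Y + \ka K^{-1}$: substituting $m=1$ in \eqref{t2m-1:evev2}, only the three terms with $c=0$ and $(b,a)\in\{(0,0),(1,0),(1,1)\}$ survive (all others vanish because $p^{(n)}(\ka)=0$ for $n<0$), and these three terms read off exactly $\ka K^{-1}, F, \Y$. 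For the inductive step, assuming the formulas for $\dvev{2m-2}$ and $\dvev{2m-1}$, I would compute $t\cdot \dvev{2m-1}$ and $t\cdot \dvev{2m}$ explicitly and check that the results match $[2m]\dvev{2m}$ and $[2m+1]\dvev{2m+1}+[2m]\dvev{2m-1}$ respectively.

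To execute the left multiplication by $t = F + \Y + \ka K^{-1}$ summand by summand on a generic monomial $\Y^{(a)}\qbinom{\kk;1-m}{c}K^{b-2m+2c+1}F^{(b-a)}$: the $\ka K^{-1}$ piece only contributes uniform $q$-scalings via $K^{-1}\Y^{(a)}K = q^{-2a}\Y^{(a)}$ and $K^{-1}F^{(b-a)}K = q^{2(b-a)}F^{(b-a)}$; the $\Y$ piece uses $\Y\cdot\Y^{(a)} = [a+1]\Y^{(a+1)}$ together with the binomial shift $\Y\qbinom{\kk;1-m}{c} = \qbinom{\kk;-m}{c}\Y$ from \eqref{FEk}. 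The delicate piece is $F$: formula \eqref{FYn} splits $F\Y^{(a)} = q^{-2a}\Y^{(a)}F + \Y^{(a-1)}\bigl(q^{3-3a}K^{-2}-q^{1-a}\bigr)/(q^2-1)$; rewriting $K^{-2}=1+(q^2-1)\kk$ turns the correction into $\Y^{(a-1)}\bigl(-q^{1-2a}[a-1]+q^{3-3a}\kk\bigr)$, and the $\kk$-summand is then absorbed into the binomial via \eqref{kbinom2} as the $c\mapsto c+1$ shift. After all three pieces are pushed through and the commutation $F\qbinom{\kk;1-m}{c}=\qbinom{\kk;-m}{c}F$ from \eqref{FEk} is applied, the three contributions fit into a common triple sum in the target indexing.

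At that point the coefficient of a generic monomial $\Y^{(a)}\qbinom{\kk;1-m}{c}K^{b-2m+2c}F^{(b-a)}$ in $t\cdot\dvev{2m-1}$ is a $q$-polynomial in two neighboring values $p^{(2m-b-2c)}(\ka)$ and $p^{(2m-b-2c-2)}(\ka)$, and the polynomial recursion \eqref{eq:f(n)}, specialized at $x=\ka$, collapses the combination to exactly $[2m]\,p^{(2m-b-2c)}(\ka)$ times the prefactors of \eqref{t2m:evev2}; the parallel computation on $t\cdot\dvev{2m}$ produces the two-term tail because one portion of the coefficient combination cannot be absorbed by \eqref{eq:f(n)} and instead matches the $[2m]\dvev{2m-1}$ summand. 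The main obstacle is the combinatorial bookkeeping: three interlocking summation indices $a$, $b$, $c$ shift simultaneously under the above moves, and the exact $q$-exponents $\binom{2c}{2}$, $\binom{2c}{2}+2c$, $-b(2m-b-2c)$, $-a(b-a)$ are forced by a suite of auxiliary $q$-integer identities of the form $q^{\alpha}[r]+q^{\beta}[s]=[r+s]$ needed to merge the $F$-commuted and $\Y$-commuted contributions. Tracking these cancellations carefully is the only substantive work, and once done the induction closes.
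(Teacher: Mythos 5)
Your proposal is correct and follows essentially the same route as the paper: induction via the recursions \eqref{eq:tt}, splitting left multiplication by $t=F+\Y+\ka K^{-1}$ into three summands, commuting $F$ past $\Y^{(a)}$ with \eqref{FYn}, shifting the $\kk$-binomials with \eqref{FEk} and \eqref{kbinom2}, and invoking the polynomial recursion \eqref{eq:f(n)} at $x=\ka$ to collapse the $\ka K^{-1}$ contribution (with the uncollapsed remainder accounting for the $[2m]\dvev{2m-1}$ tail in the odd-to-even-plus-one step). The remaining work you defer --- matching the $q$-exponents and verifying the resulting five-term coefficient identities --- is exactly what occupies the bulk of \S\ref{subsec:proof:dvev:evKappa}, so the plan is sound and complete in outline.
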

The proof of Theorem~\ref{thm:dvev:evKappa} will be given in \S\ref{subsec:proof:dvev:evKappa} below. 
Let us list some equivalent formulae here. 
By applying the anti-involution $\vs$ we convert the formulae in Theorem~\ref{thm:dvev:evKappa} into the following.

\begin{thm}   
 \label{thm:dvev:evKappa2}
Assume $\ka$ is an even $q$-integer as in \eqref{eq:evenK}. Then we have, for $m\ge 1$,   
\begin{align}
\dvev{2m} &= \sum_{b=0}^{2m} \sum_{a=0}^{b} \sum_{c \geq 0} (-1)^c q^{3c+b(2m-b-2c) +a(b-a)}  p^{(2m-b-2c)}(\ka) 
\label{t2m:evevFhE} 
\\
&\qquad\qquad\qquad\quad \cdot  F^{(b-a)} K^{b-2m+2c} \qbinom{\kk;m-c}{c}  \Y^{(a)},
\notag
\\ %
\dvev{2m-1} &= 
\sum_{b=0}^{2m-1} \sum_{a=0}^{b} \sum_{c \geq 0} (-1)^c q^{c +b(2m-b-2c-1) +a(b-a)}
 p^{(2m-b-2c-1)}(\kappa) 
\label{t2m-1:evevFhE}
\\
&\qquad\qquad\qquad\quad \cdot  F^{(b-a)} K^{b-2m+2c+1}  \qbinom{\kk;m-c}{c}  \Y^{(a)}.
\notag
\end{align}
\end{thm}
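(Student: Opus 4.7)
I would prove Theorem \ref{thm:dvev:evKappa2} by applying the anti-involution $\vs$ of \eqref{eq:vs} to both identities in Theorem \ref{thm:dvev:evKappa}. By Lemma \ref{lem:anti}, $\vs$ fixes each $\dvev{n}$, so the left-hand sides remain unchanged. On the right-hand sides, since $\vs$ is an anti-homomorphism fixing $F^{(j)}$, $\Y^{(a)}$, and $K^{r}$, the product order is reversed, converting $\Y^{(a)} \qbinom{\kk;1-m}{c} K^{b-2m+2c} F^{(b-a)}$ into $F^{(b-a)} K^{b-2m+2c}\, \vs(\qbinom{\kk;1-m}{c})\, \Y^{(a)}$.

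The next step is to substitute the transformation rule $\vs(\qbinom{\kk;1-m}{c}) = (-1)^c q^{2c(c+1)} \qbinom{\kk;m-c}{c}$ from Lemma \ref{lem:anti}; this produces precisely the operator monomial appearing in \eqref{t2m:evevFhE} and \eqref{t2m-1:evevFhE}. For the scalar prefactor, $\vs$ acts via $q \mapsto q^{-1}$ and thereby negates the $q$-exponents. Combining with the $q^{2c(c+1)}$ factor contributed by the binomial transformation, the key telescoping is
$$-\binom{2c}{2} + 2c(c+1) = -c(2c-1) + 2c(c+1) = 3c.$$
This converts the even-case exponent $\binom{2c}{2} - b(2m-b-2c) - a(b-a)$ into $3c + b(2m-b-2c) + a(b-a)$, matching \eqref{t2m:evevFhE}. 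The odd case runs entirely parallel: the starting exponent $\binom{2c}{2} + 2c - b(2m-b-2c-1) - a(b-a)$ becomes $c + b(2m-b-2c-1) + a(b-a)$ as in \eqref{t2m-1:evevFhE}, after the same $-\binom{2c}{2} + 2c(c+1) = 3c$ is combined with the extra $-2c$ from negating the $+2c$ already present.

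The main point requiring care, and what I expect to be the principal obstacle to writing this out cleanly, is the action of $\vs$ on the scalar $p^{(n)}(\ka) \in \A$: $\vs$ bar-conjugates the rational coefficients of the polynomial $p^{(n)}$ while fixing $\ka$, so it is not a priori obvious that the resulting coefficient matches the $p^{(n)}(\ka)$ in the statement. Either a direct symmetry within the triple sum absorbs the bar conjugation, or one appeals to the fact that both sides of \eqref{t2m:evev2} and \eqref{t2m:evevFhE} represent the single element $\dvev{2m} \in \Ui$, so agreement across the whole sum is automatic. Once this identification is settled, the reordering and the exponent arithmetic described above complete the proof.
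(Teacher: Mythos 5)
Your route is the same as the paper's (apply $\vs$ to Theorem \ref{thm:dvev:evKappa} and invoke Lemma \ref{lem:anti}), and the mechanical parts are right: the reversal of the monomial order, the transformation of $\qbinom{\kk;1-m}{c}$, and the exponent bookkeeping $-\binom{2c}{2}+2c(c+1)=3c$ (resp.\ $3c-2c=c$ in the odd case) all check out. But the point you flag at the end is not a formality to be deferred --- it is a genuine gap, and neither of your two proposed resolutions closes it. Since $\vs$ acts on scalars by $q\mapsto q^{-1}$, it sends $p^{(n)}(\ka)$ to $\ov{p^{(n)}(\ka)}$, and this is \emph{not} equal to $p^{(n)}(\ka)$ once $n\ge 3$ and $\ka\ne 0$: for instance $p_3(\ka)=\ka^3+(q^{-4}+q^{-2})\ka$, so $\ov{p_3(\ka)}-p_3(\ka)=(q^4+q^2-q^{-2}-q^{-4})\ka\ne 0$ for $\ka=[2\ell]$ with $\ell\ne 0$. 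There is also no compensating symmetry inside the triple sum: isolating the coefficient of the PBW element $F^{(0)}K^{-3}\Y^{(0)}$ in $\dvev{3}$ (only $a=b=0$, $c\in\{0,1\}$ contribute), a direct computation from $\dvev{3}=(t^3-[2]^2t)/[3]!$ gives $\tfrac{1}{[3]!}\bigl(\ka^3-\tfrac{(2q^2+q^4)\ka}{q^2-1}\bigr)$, which agrees with $\ov{p^{(3)}(\ka)}-\tfrac{q^5\ka}{q^4-1}$ but not with $p^{(3)}(\ka)-\tfrac{q^5\ka}{q^4-1}$. Your second suggested escape --- that agreement is automatic because both sides represent the same element of $\Ui$ --- is circular: the content of the theorem is precisely that the displayed right-hand side equals that element.

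So what applying $\vs$ actually yields is \eqref{t2m:evevFhE}--\eqref{t2m-1:evevFhE} with $p^{(2m-b-2c)}(\ka)$ and $p^{(2m-b-2c-1)}(\ka)$ replaced by their bar-conjugates; either the statement should be read that way or a further argument is required. To be fair, the paper's own one-line proof makes the same silent identification (it lists $\ka$ among the elements fixed by $\vs$ but never addresses $p^{(n)}(\ka)$), so your writeup is, if anything, more candid about where the difficulty lies; but as it stands the argument does not establish the statement in the form given.
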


\begin{proof}
By Lemma~\ref{lem:anti}, the anti-involution $\vs$ sends 
$\qbinom{\kk;1-m}{c}\mapsto (-1)^c q^{2c(c+1)} \qbinom{\kk;m-c}{c}$, 
$q\mapsto q^{-1}$, while fixing $\ka, K, \Y^{(a)}, F^{(a)}$ and $\dvev{n}$. The formulae \eqref{t2m:evevFhE}--\eqref{t2m-1:evevFhE} now follows by applying $\vs$ to the formulae  \eqref{t2m:evev2}--\eqref{t2m-1:evev2} in Theorem~\ref{thm:dvev:evKappa}. 
%
\end{proof}
For $c\in \N, m\in\Z$, we define
\begin{align}   \label{cbinom}
  \cbinom{m}{c} &= \prod_{i=1}^c \frac{q^{4(m+i-1)} -1}{q^{-4i}-1}  \in \A,
  \qquad \cbinom{m}{0}=1.
\end{align}
 Note that $\cbinom{m}{c}$ are $q^2$-binomial coefficients up to some $q$-powers.
Let $\la, m \in \Z$ with $m\ge 1$.  We note that
\begin{align}
  \label{eq:qbinom-hc}
\qbinom{\kk;m-c}{c} \one_{2\la} 
&= \prod_{i=1}^c \frac{q^{4(m-\la-c+i-1)} -1}{q^{4i} -1}  \one_{2\la}
= (-1)^c q^{-2c(c+1)} \cbinom{m-\la-c}{c}  \one_{2\la}.
\end{align}

The following corollary is immediate from \eqref{eq:qbinom-hc}, Proposition~\ref{prop:fg:ev}, and Theorem~\ref{thm:dvev:evKappa}. 
\begin{cor}  
We have $\dvev{n} \in {}_\A \Ui_{\rm{ev}}$, for all $n$. 
\end{cor}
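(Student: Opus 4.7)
The plan is to use the equivalent definition
\[
{}_\A \Ui_{\rm ev} = \{x\in \Ui \mid x u \in \UAdot_{\rm ev}, \forall u\in \UAdot_{\rm ev}\}.
\]
Since $\UAdot_{\rm ev}$ is $\A$-spanned by divided-power monomials applied to the idempotents $\one_{2\la}$ for $\la\in\Z$, and since $t=F+\Y+\ka K^{-1}$ shifts weights only by even amounts, the claim reduces to showing that $\dvev{n}\one_{2\la}\in\UAdot_{\rm ev}$ for every $\la\in\Z$.

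Next, I would substitute the expansion from Theorem~\ref{thm:dvev:evKappa2} and act term by term on $\one_{2\la}$. A typical monomial has the shape
\[
(\text{sign})\cdot q^{N}\cdot p^{(2m-b-2c)}(\ka)\cdot F^{(b-a)} K^{j} \qbinom{\kk;m-c}{c} \Y^{(a)} \one_{2\la},
\]
in which $F^{(b-a)}$, $K^{\pm 1}$, and $\Y^{(a)}=q^{-a^2}E^{(a)}K^{-a}$ manifestly send $\one_{2\la}$ into $\UAdot_{\rm ev}$. Two scalar factors require verification. The coefficient $p^{(2m-b-2c)}(\ka)$ lies in $\A$ by Proposition~\ref{prop:fg:ev}, because $\ka=[2\ell]$ is an even $q$-integer. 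The factor $\qbinom{\kk;m-c}{c}$, when it meets $\one_{2\la}$, becomes the scalar $(-1)^c q^{-2c(c+1)}\cbinom{m-\la-c}{c}$ by the identity \eqref{eq:qbinom-hc}, and this scalar lies in $\A$ by the definition \eqref{cbinom}. The remaining signs and powers of $q$ appearing in Theorem~\ref{thm:dvev:evKappa2} lie in $\A$ by inspection, so each term in the expansion of $\dvev{n}\one_{2\la}$ is an $\A$-multiple of an element of $\UAdot_{\rm ev}$, as required.

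The main conceptual work, namely the integrality $p^{(n)}(\ka)\in\A$, has already been discharged in Proposition~\ref{prop:fg:ev} via the expansion of $p^{(n)}$ in the manifestly integral polynomials $g^{(n)}$; with that in hand, the present corollary is a routine bookkeeping check combining \eqref{eq:qbinom-hc}, Proposition~\ref{prop:fg:ev}, and Theorem~\ref{thm:dvev:evKappa2}, and presents no further obstacle.
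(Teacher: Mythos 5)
Your proof is correct and follows exactly the route the paper takes: the paper declares the corollary ``immediate from \eqref{eq:qbinom-hc}, Proposition~\ref{prop:fg:ev}, and Theorem~\ref{thm:dvev:evKappa},'' and your write-up simply fills in the routine bookkeeping of acting on $\one_{2\la}$ term by term. No discrepancies to report.
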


\begin{rem}   
Note $p_n(0)=0$ for $n>0$, and $p_0(0)=1$. 
The formulae in Theorems~\ref{thm:dvev:evKappa} and  \ref{thm:dvev:evKappa2} in the special case for $\ka=0$ recover the formulae in \cite[Theorem~2.5, Proposition~2.7]{BeW18}.
\end{rem}

For $n\in \N$, we denote
\begin{align}  \label{eq:divB}
b^{(n)}  = \sum_{a=0}^n  q^{-a(n-a)}  \Y^{(a)}F^{(n-a)}.
\end{align}

\begin{example}
The formulae for $\dvev{n}$ in Theorem~\ref{thm:dvev:evKappa}, for $1\le n \le 3$, read as follows:
\begin{align*}
\dvev{1} 
&= F +\Y + \ka K^{-1},
\\
\dvev{2} 
&= \Y^{(2)} +q^{-1} \Y F + F^{(2)}  + q [\kk;0] 
 + \ka (q^{-1}K^{-1}F + q^{-1} \Y K^{-1})  + \ka^2 \frac{K^{-2}}{[2]},
\\
\dvev{3}  &   = b^{(3)} + q^3[\kk;-1]F+ q^3 \Y [\kk;-1]
\\
&\qquad 
+ \big(q^{-2} \Y^{(2)}K^{-1} +q^{-3} \Y K^{-1}F + q^{-2} K^{-1}F^{(2)} 
+ q^3 [h;-1] K^{-1} \big) \ka
\\
&\qquad + \frac{q^{-2}}{[2]} (\Y K^{-2} +K^{-2}F) \ka^2
+\frac{\ka^3 + (q^{-4} +q^{-2})\ka}{[3]!} K^{-3}. 
\end{align*}
\end{example}

\subsection{The $\imath$-canonical basis for $\Udot^\imath_{\rm{ev}}$ for even $\ka$}
  \label{sec:iCB:ev}
  
Denote by $L(\mu)$ the  
simple $\U$-module of highest weight $\mu \in \N$, with highest weight vector $v^+_\mu$.
Then $L(\mu)$ admits a canonical basis $\{ F^{(a)} v^+_\mu\mid 0\le a \le \mu\}$. 
Following \cite{BW18a, BW18b}, there exists a new bar involution $\psi_\imath$ on $L(\mu)$, which, in our current rank one setting, can be defined simply by requiring $\dvev{n} v^+_\mu$ to be $\psi_\imath$-invariant for all $n$. 
As a very special case of the general results in \cite[Corollary~F]{BW18b} (also cf. \cite{BW18a}), we know that
 the $\imath$-canonical basis $\{b^{\mu}_{a} \}_{0\le a \le \mu}$ of $L(\mu)$ exists and is characterized by the following 2 properties: 

(iCB1) $b^{\mu}_{a}$ is $\psi_\imath$-invariant; \qquad 

(iCB2) $b^{\mu}_{a} \in F^{(a)} v^+_\mu + \sum_{0\le r< a} q^{-1} \Z[q^{-1}] F^{(r)} v^+_\mu$.

 \begin{thm}  \label{thm:iCB:ev}
   $\quad$
\begin{enumerate}
\item
Let $n\in \N$. 
For each integer $\la \gg n$, the element $\dvev{n} \vev$ is an $\imath$-canonical basis element for $L(2\la)$. 
 
 \item
The set $\{\dvev{n} \mid n \in \N \}$ forms the $\imath$-canonical basis for ${\U}^\imath$ (and an $\A$-basis in ${}_\A \Ui_{\rm{ev}}$). 
\end{enumerate}
 \end{thm}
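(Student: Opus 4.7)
The plan is to prove (1) using the expansion formula of Theorem~\ref{thm:dvev:evKappa} and then derive (2) as a formal consequence. Granting (1), the identity $\dvev{n}\vev = b^{2\la}_n$ for all $\la\gg n$ supplies the asymptotic compatibility with the $\imath$-canonical bases of $\{L(2\la)\}$ required by the framework of \cite{BW16}; together with the $\psi_\imath$-invariance of $\dvev{n}$ this identifies $\{\dvev{n}\}_{n\ge 0}$ with the $\imath$-canonical basis of $\Ui$. The $\A$-basis statement then follows from the preceding corollary ($\dvev{n}\in{}_\A\Ui_{\rm ev}$) combined with the fact that the recursion \eqref{eq:tt} gives a unitriangular change of basis between $\{t^n\}$ and $\{\dvev{n}\}$ over $\A$.

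For part (1), property (iCB1) is built in: by the convention recalled just before the theorem, $\psi_\imath$ on $L(2\la)$ is defined so that $\dvev{n}\vev$ is $\psi_\imath$-invariant. The serious point is (iCB2). I would substitute $\vev=v^+_{2\la}$ into the triple sum \eqref{t2m:evev2} (respectively \eqref{t2m-1:evev2}), use $K^j\vev=q^{2\la j}\vev$, evaluate $\qbinom{\kk;1-m}{c}$ on each weight space via the analogue of \eqref{eq:qbinom-hc}, and apply Lusztig's formula $E^{(a)}F^{(n)}v^+_{\mu}=\qbinom{\mu-n+a}{a}F^{(n-a)}v^+_{\mu}$ to rewrite $\Y^{(a)}F^{(b-a)}\vev$ as a scalar multiple of $F^{(b-2a)}\vev$. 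The resulting expression is an explicit combination $\sum_{r=0}^{n}c_r(\la)F^{(r)}\vev$. Direct inspection shows that $(a,b,c)=(0,n,0)$ is the unique summand producing $F^{(n)}\vev$ and contributes coefficient $p^{(0)}(\ka)=1$, while every other summand contributes to some $F^{(r)}\vev$ with $r<n$.

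The hard part is the degree condition in (iCB2): proving $c_r(\la)\in q^{-1}\Z[q^{-1}]$ for every $r<n$ and all $\la\gg n$. After the substitution above, each summand is a product of (i) the integral factor $p^{(2m-b-2c)}(\ka)\in\A$ furnished by Proposition~\ref{prop:fg:ev}, (ii) the pure $q$-power from the $K$-action together with the explicit $q$-exponent in \eqref{t2m:evev2}, (iii) a $q^2$-binomial factor of the shape $\cbinom{\ast-\la}{c}$ coming from $\qbinom{\kk;1-m}{c}$, and (iv) a $q$-binomial $\qbinom{2\la-b+2a}{a}$ coming from Lusztig's formula. For each fixed $r<n$ one collects these contributions over all triples $(a,b,c)$ with $b-2a=r$ and verifies that the leading $q$-exponent is strictly negative for $\la$ sufficiently large; crucially, the $\la$-linear exponents in (ii) and (iv) cancel in the leading term, while the factor in (iii) already lies in $q^{-1}\Z[q^{-1}]$ for $c\ge 1$ in the asymptotic regime, so the remaining task is to check that the $\la$-free leading exponent is negative whenever $r<n$. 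This bookkeeping is nontrivial but parallels the $\ka=0,1$ analysis in \cite{BeW18}; once the bound is established, $c_r(\la)\in q^{-1}\Z[q^{-1}]$, completing the proof of (iCB2) and hence of the theorem.
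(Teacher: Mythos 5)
Your strategy for part (1) is sound in outline, but you have picked the harder of the two equivalent expansion formulae, and the step you defer as ``bookkeeping'' is the entire mathematical content of (iCB2). The paper instead applies the $\vs$-transposed formula of Theorem~\ref{thm:dvev:evKappa2} to $\vev$: there $\Y^{(a)}$ sits on the \emph{right}, so $\Y^{(a)}\vev=0$ for $a>0$ and the triple sum collapses at once to a double sum over $(b,c)$, with $F^{(b)}\vev$ carrying the coefficient $q^{(b-2\la)(2m-b-2c)-2c^2+c}\,p^{(2m-b-2c)}(\ka)\,\cbinom{m-\la-c}{c}$. The degree estimate is then a one-line check: $\cbinom{m-\la-c}{c}\in\N[q^{-1}]$ for $\la\ge m$, and $\deg_q p^{(n)}(\ka)=(2\ell-1)n-\binom{n}{2}$ is independent of $\la$, so the exponent $(b-2\la)(2m-b-2c)$ pushes every term with $b+2c<2m$ into $q^{-1}\Z[q^{-1}]$ once $\la\gg n$, while the $c\ge 1$, $b+2c=2m$ terms are handled by the prefactor $q^{-2c^2\pm c}$. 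Your route through Theorem~\ref{thm:dvev:evKappa} keeps all three indices alive, forces Lusztig's $E^{(a)}F^{(k)}$ formula into the picture, and produces the $q$-binomial $\qbinom{2\la-b+2a}{a}$ whose top $q$-degree grows linearly in $\la$; the cancellation of $\la$-linear exponents you assert does in fact occur, but verifying it is exactly the nontrivial computation you have not carried out. So: same theorem, genuinely different and avoidably painful computation, with the decisive estimate only sketched. (Also, your parenthetical that the factor $\cbinom{\ast-\la}{c}$ ``already lies in $q^{-1}\Z[q^{-1}]$ for $c\ge 1$'' is false as stated --- its constant term in $q^{-1}$ is $1$; the strictly negative power for $c\ge1$ comes from the explicit prefactor $q^{-2c(c+1)}$ in \eqref{eq:qbinom-hc}, not from the binomial itself.)

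One genuine error in part (2): the claim that the recursion \eqref{eq:tt} gives ``a unitriangular change of basis between $\{t^n\}$ and $\{\dvev{n}\}$ over $\A$'' is wrong. From \eqref{eq:tt} one gets $t^n=[n]!\,\dvev{n}+(\text{lower terms})$, so the transition matrix is triangular with diagonal entries $[n]!$, which are not units in $\A$; correspondingly $\A[t]$ is a proper $\A$-submodule of ${}_\A\Ui_{\rm{ev}}$, and $\{t^n\}$ is not an $\A$-basis of the latter. The $\A$-basis statement cannot be derived this way; it should be quoted from \cite{BW16}, where the $\imath$-canonical basis of ${}_\A\Ui_{\rm{ev}}$ is constructed as an $\A$-basis via the projective system $\{L(2\la)\}_{\la\in\N}$, after which part (1) identifies that basis with $\{\dvev{n}\}$ --- which is exactly how the paper concludes.
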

 
 \begin{proof}
Let $\la, m \in \N$ with $m\ge 1$.  
We compute by Theorem~\ref{thm:dvev:evKappa2}(1) and \eqref{eq:qbinom-hc} that 
\begin{align} 
\dvev{2m} \vev 
&= \sum_{b=0}^{2m} \sum_{c \geq 0} (-1)^c q^{3c+b(2m-b-2c)}  p^{(2m-b-2c)}(\ka) 
F^{(b)} K^{b-2m+2c} \qbinom{\kk;m-c}{c} \vev 
\notag \\
&= \sum_{b=0}^{2m} \sum_{c \geq 0} q^{(b-2\la)(2m-b-2c)-2c^2+c}  p^{(2m-b-2c)}(\ka) 
\cbinom{m-\la-c}{c}F^{(b)}   \vev 
  \label{dvev:2m-mod}.
\end{align}
Similarly we compute by Theorem~\ref{thm:dvev:evKappa2}(2) that 
\begin{align} 
\dvev{2m-1} \vev 
&= 
\sum_{b=0}^{2m-1} \sum_{c \geq 0} (-1)^c q^{c +b(2m-b-2c-1)}
 p^{(2m-b-2c-1)}(\kappa) 
F^{(b)} K^{b-2m+2c+1}  \qbinom{\kk;m-c}{c}  \vev
\notag
\\
&= 
\sum_{b=0}^{2m-1} \sum_{c \geq 0}  q^{(b-2\la)(2m-b-2c-1)-2c^2-c}
 p^{(2m-b-2c-1)}(\kappa) 
\cbinom{m-\la-c}{c} F^{(b)}  \vev. 
 \label{dvev:2m-1-mod}
\end{align}

Note that
\begin{equation}
  \label{eq:q-1}
\cbinom{m-\la-c}{c} \in \N[q^{-1}], \qquad \text{ for }\la \ge m, c\ge 0.
\end{equation}  

Let $n\in \N$. Recall $\ka =[2\ell]$, for $\ell \in \Z$.  
One computes that $\deg_q p^{(n)} (\ka) =(2\ell-1)n - {n \choose 2}$ and hence $q^{(b-2\la)(n-b-2c)-2c^2 \pm c}
 p^{(n-b-2c)}(\kappa) \in q^{-1} \N[q^{-1}]$ when $\la \gg n>b+2c$. It follows by \eqref{dvev:2m-mod}, \eqref{dvev:2m-1-mod} and \eqref{eq:q-1} that 
 \begin{align}
   \label{eq:lattice}
 \dvev{n} \vev  & \in F^{(n)}  \vev + \sum_{b<n}q^{-1} \Z[q^{-1}] F^{(b)} \vev,
 \qquad \text{ for } \la \gg n.
 \end{align}
 Therefore, the first statement follows by the characterization properties (iCB1)--(iCB2) of the $\imath$-canonical basis for $L(2\la)$.
 
 The second statement follows now from the definition of  the $\imath$-canonical basis for ${\U}^\imath$ using
the projective system $\{ L(2\la) \}_{\la\in \N}$ with $\Ui$-homomorphisms $L(2\la+2) \rightarrow L(2\la)$; cf. \cite[\S6]{BW18b}; this set $\{\dvev{n} \mid n \in \N \}$ forms an $\A$-basis for  ${}_\A \Ui_{\rm{ev}}$. 
The theorem is proved. 
 \end{proof}
 
 \begin{rem}
It follows by \eqref{dvev:2m-mod} that 
\[
\dvev{2m} \vev   \in F^{(2m)}  \vev +  q^{2m-2\la-1} \ka F^{(2m-1)}  \vev +\sum_{k\ge 2} \A \cdot F^{(2m-k)}  \vev.
\]
Recall $\ka=[2\ell]$. When $\ell -1 \ge \la -m \ge 0$, we have $q^{2m-2\la-1} \ka \not \in q^{-1} \Z[q^{-1}]$, and therefore, the nonzero element $\dvev{2m} \vev$ is not an $\imath$-canonical basis element in $L(2\la)$. So we cannot expect to have a stronger form of Theorem~\ref{thm:iCB:ev} as \cite[Theorem 2.10(1)]{BeW18} in  the case when $\ka=0$ (i.e., $\ell=0$). 
 \end{rem}


%
%
\subsection{Proof of Theorem~\ref{thm:dvev:evKappa} }
\label{subsec:proof:dvev:evKappa}
We prove the formulae for $\dvev{n}$ by induction on $n$, in two steps (1)--(2) below.  The base cases when $n=1,2$ are clear. 

(1) We shall prove the formula \eqref{t2m:evev2} for $\dvev{2m}$, assuming the formula \eqref{t2m-1:evev2} for $\dvev{2m-1}$.

Recall $[2m] \dvev{2m} = t \cdot \dvev{2m-1}$, and $t= F+ \Y  +\ka K^{-1}$. 
Let us compute 
\[
I=\Y  \dvev{2m-1},
\qquad
II=F \dvev{2m-1},
\qquad
III=\ka K^{-1} \dvev{2m-1}.
\]
First we have
\begin{align*}
I 
&=\sum_{b=0}^{2m-1} \sum_{a=0}^{b} \sum_{c \geq 0} q^{\binom{2c}{2}+2c -b(2m-b-2c-1) -a(b-a)}
    [a+1] p^{(2m-b-2c-1)}(\kappa) \\
  & \qquad\qquad \cdot \Y^{(a+1)}  \qbinom{\kk;1-m}{c} K^{b-2m+2c+1} F^{(b-a)}
\\
&=\sum_{b=0}^{2m} \sum_{a=0}^{b} \sum_{c \geq 0} q^{\binom{2c}{2}+2c -(b-1)(2m-b-2c) -(a-1)(b-a)}
    [a] p^{(2m-b-2c)}(\kappa) \\
  & \qquad\qquad \cdot \Y^{(a)}  \qbinom{\kk;1-m}{c} K^{b-2m+2c} F^{(b-a)},
\end{align*}
where the last equation is obtained by shifting indices $a\to a-1$, $b\to b-1$ (and then adding some zero terms to make the bounds of the summations uniform throughout).
Using \eqref{FYn} we  have
\begin{align*}
II  
&=\sum_{b=0}^{2m-1} \sum_{a=0}^{b} \sum_{c \geq 0} q^{\binom{2c}{2}+2c -b(2m-b-2c-1) -a(b-a)}
   p^{(2m-b-2c-1)}(\kappa) \\
  & \qquad \cdot 
  \left( 
   q^{-2a} \Y^{(a)} F + \Y^{(a-1)} \frac{q^{3-3a}K^{-2}-q^{1-a}}{q^2-1}
  \right )
   \qbinom{\kk;1-m}{c} K^{b-2m+2c+1} F^{(b-a)} =II^1 +II^2,
\end{align*}
where $II^1$ and $II^2$ are the two natural summands associated to the plus sign.
By shifting index $b\to b-1$ and then adding some zero terms, we further have
\begin{align*}
II^1&=\sum_{b=0}^{2m} \sum_{a=0}^{b} \sum_{c \geq 0} q^{\binom{2c}{2}+2c - (b-1)(2m-b-2c) -a(b-a-1)-2a+2b+4c-4m}
   [b-a] p^{(2m-b-2c)}(\kappa) \\
  & \qquad \qquad \cdot  \Y^{(a)}  \qbinom{\kk;-m}{c} K^{b-2m+2c} F^{(b-a)}.
\end{align*}
By shifting the indices $a\to a+1$, $b\to b+1$ and $c\to c-1$, we further have
\begin{align*}
II^2
 &=\sum_{b=0}^{2m} \sum_{a=0}^{b} \sum_{c \geq 0} q^{\binom{2c-2}{2}+2c - (b+1)(2m-b-2c) - (a+1)(b-a)-2}
  p^{(2m-b-2c)}(\kappa) \\
  & \qquad \qquad \cdot  \Y^{(a)} \frac{q^{-3a}K^{-2}-q^{-a}}{q^2-1} \qbinom{\kk;1-m}{c-1} K^{b-2m+2c} F^{(b-a)}.
\end{align*}
Using \eqref{recursive:f} we also compute
\begin{align*}
III 
&=\sum_{b=0}^{2m} \sum_{a=0}^{b} \sum_{c \geq 0} q^{\binom{2c}{2}+2c -b(2m-b-2c-1) -a(b-a) -2a}
    \ka \cdot p^{(2m-b-2c-1)}(\kappa) \\
  & \qquad\qquad \cdot \Y^{(a)}  \qbinom{\kk;1-m}{c} K^{b-2m+2c} F^{(b-a)}
\\
&=\sum_{b=0}^{2m} \sum_{a=0}^{b} \sum_{c \geq 0} q^{\binom{2c}{2}+2c -b(2m-b-2c-1) -a(b-a) -2a}
    [2m-b-2c] p^{(2m-b-2c)}(\kappa)  \\
  & \qquad\qquad \cdot \Y^{(a)}  \qbinom{\kk;1-m}{c} K^{b-2m+2c} F^{(b-a)}
  \\
 & \quad+\sum_{b=0}^{2m} \sum_{a=0}^{b} \sum_{c \geq 0} q^{\binom{2c}{2}+2c -b(2m-b-2c-1) -a(b-a) -2a -4m}
    [2m-b-2c] p^{(2m-b-2c)}(\kappa)  \\
  & \qquad\qquad \cdot \Y^{(a)}  \qbinom{\kk;1-m}{c-1} K^{b-2m+2c-2} F^{(b-a)}.
\end{align*}
(Note that we have shifted the index $c\to c-1$ in the last summand above.)

Collecting the formulae for $I, II^1, II^2$ and $III$  gives us
\[
t \cdot \dvev{2m-1} = 
 \sum_{0 \le a \le b \le 2m}  \sum_{c\ge 0}  p^{(2m-b-2c)}(\kappa)  \Y^{(a)} H_{a,b,c}  K^{b-2m+2c} F^{(b-a)}, 
\]
where   
\begin{align*}
H_{a,b,c} & :=
 q^{\binom{2c}{2}+2c -(b-1)(2m-b-2c) -(a-1)(b-a)} [a]  \qbinom{\kk;1-m}{c} 
  \\
& + q^{\binom{2c}{2}+2c - (b-1)(2m-b-2c) -a(b-a-1)-2a+2b+4c-4m} [b-a]  \qbinom{\kk;-m}{c} 
\\
& + q^{\binom{2c}{2}-2c+1 - (b+1)(2m-b-2c) - (a+1)(b-a)}
  \frac{q^{-3a}K^{-2}-q^{-a}}{q^2-1} \qbinom{\kk;1-m}{c-1}    
\\
&+ q^{\binom{2c}{2}+2c -b(2m-b-2c-1) -a(b-a) -2a} [2m-b-2c] \qbinom{\kk;1-m}{c}  
  \\
& - q^{\binom{2c}{2}+2c -b(2m-b-2c-1) -a(b-a) -2a -4m} [2m-b-2c] \qbinom{\kk;1-m}{c-1} K^{-2}.
\end{align*}

Recall $[2m] \dvev{2m} = t \cdot \dvev{2m-1}$. To prove the formula \eqref{t2m:evev2},  by the PBW basis theorem it suffices to prove the following identity, for all $a,b,c$:
\begin{align}  \label{eq:H}
H_{a,b,c} = q^{\binom{2c}{2} -b(2m-b-2c) -a(b-a)} [2m]  \qbinom{\kk;1-m}{c}.
\end{align}
Thanks to $q^{2m-a} [a] -[2m] =q^{-a} [a-2m]$, we can combine the RHS with the first summand of LHS \eqref{eq:H}.
Hence, after canceling out the $q$-powers $q^{\binom{2c}{2} -b(2m-b-2c) -a(b-a)-a}$ on both sides, we see that \eqref{eq:H} is equivalent to the following identity, for all $a, b, c$:
\begin{equation}  \label{ABCD}
A+B+C+D_1 +D_2=0,
\end{equation}
where   
\begin{align*}
A &= [a-2m]  \qbinom{\kk;1-m}{c},
  \\
B &=    q^{4c-2m+b} [b-a] \qbinom{\kk;-m}{c},
\\
C   &=  q^{2a-2m+1} \frac{q^{-3a}K^{-2}-q^{-a}}{q^2-1} \qbinom{\kk;1-m}{c-1},
\\
D_1 &=  q^{2c+b-a} [2m-b-2c] \qbinom{\kk; 1-m}{c},
   \\
D_2 &= -  q^{2c-4m+b-a} [2m-b-2c] \qbinom{\kk; 1-m}{c-1} K^{-2}.
\end{align*}

Let us prove the identity \eqref{ABCD}. Using \eqref{kbinom2}, we can write $B=B_1+B_2$, where 
\begin{align*}
B_1 &=  q^{4c-2m+b} [b-a]  \qbinom{\kk; 1-m}{c},
   \quad
B_2 = -  q^{4c-6m+b} [b-a] \qbinom{\kk; 1-m}{c-1} K^{-2}.
\end{align*}
Noting that  
\[
\frac{q^{-3a}K^{-2}-q^{-a}}{q^2-1} = q^{4m-4c-3a}\frac{( q^{4c-4m}K^{-2}-1)}{q^2-1}
 +  q^{2m-2c-2a-1} [2m-2c-a],
 \] 
we rewrite $C=C_1+C_2$, where
\begin{align*}
C_1 &= q^{-2c+2m-a} [2c] \qbinom{\kk; 1-m}{c},
   \quad
C_2 =   q^{-2c} [2m-2c-a]  \qbinom{\kk; 1-m}{c-1}.
\end{align*}

A direct computation gives us
\begin{align*}
A+B_1+C_1+D_1 
&= (q-q^{-1})  [2c] [2m-2c-a]  \qbinom{\kk; 1-m}{c},
\\
C_2 +(B_2+D_2)  
&= C_2 -  q^{2c-4m} [2m-2c-a] \qbinom{\kk; 1-m}{c-1} K^{-2}
\\
&= - (q-q^{-1}) [2c] [2m-2c-a] \qbinom{\kk; 1-m}{c}.
\end{align*}
Summing up these two equations, we have $A+B+C+D_1+D_2=0,$ whence \eqref{ABCD}, completing Step~(1). 

\vspace{3mm}

(2) Assuming the formulae for $\dvev{n}$ with $n\le 2m$, we shall now prove the following formula for $\dvev{2m+1}$ (obtained from \eqref{t2m-1:evev2} with $m$ replaced by $m+1$):
\begin{align}
\dvev{2m+1} &= 
\sum_{b=0}^{2m+1} \sum_{a=0}^{b} \sum_{c \geq 0} q^{\binom{2c}{2}+2c -b(2m-b-2c+1) -a(b-a)}
 p^{(2m-b-2c+1)}(\kappa) 
\label{t2m+1:evev2}
\\
&\qquad\qquad\qquad\quad \cdot \Y^{(a)}  \qbinom{\kk;-m}{c} K^{b-2m+2c-1} F^{(b-a)}. 
\notag
\end{align}
The proof is based on the recursion $t \cdot \dvev{2m} =[2m+1] \dvev{2m+1} +[2m] \dvev{2m-1}$. 
Recall $t= F +\Y +\ka K^{-1}$ and $\dvev{2m}$ from \eqref{t2m:evev2}. We shall compute 
\[
\texttt I=\Y  \dvev{2m},
\qquad
\texttt{II}=F \dvev{2m},
\qquad
\texttt{III} =\ka K^{-1} \dvev{2m}, 
\]
respectively. First we have
\begin{align*}
\texttt{I} 
&= 
\sum_{b=0}^{2m} \sum_{a=0}^{b} \sum_{c \geq 0} q^{\binom{2c}{2} -b(2m-b-2c) -a(b-a)} 
 [a+1] p^{(2m-b-2c)}(\kappa)  
 \Y^{(a+1)}  \qbinom{\kk;1-m}{c} K^{b-2m+2c} F^{(b-a)}\\
&= 
 \sum_{b=0}^{2m+1} \sum_{a=0}^{b} \sum_{c \geq 0} q^{\binom{2c}{2} -(b-1)(2m-b-2c+1) -(a-1)(b-a)} 
[a] p^{(2m-b-2c+1)}(\kappa) \\
&\qquad\qquad\qquad\qquad 
\cdot \Y^{(a)}  \qbinom{\kk;1-m}{c} K^{b-2m+2c-1} F^{(b-a)},
\end{align*}
where the last equation is obtained by shifting indices $a\to a-1$, $b\to b-1$. We also have
\begin{align*}
\texttt{II} 
&= \sum_{b=0}^{2m} \sum_{a=0}^{b} \sum_{c \geq 0} q^{\binom{2c}{2} -b(2m-b-2c) -a(b-a)} 
p^{(2m-b-2c)}(\kappa) 
\\
& \qquad \cdot 
  \left( 
   q^{-2a} \Y^{(a)} F + \Y^{(a-1)} \frac{q^{3-3a}K^{-2}-q^{1-a}}{q^2-1}
  \right )
   \qbinom{\kk;1-m}{c} K^{b-2m+2c} F^{(b-a)} 
=\texttt{II}_1 +\texttt{II}_2,
\end{align*}
where $\texttt{II}_1$ and $\texttt{II}_2$ are the two natural summands associated to the plus sign.
By shifting the index $b\to b-1$, we have
\begin{align*}
\texttt{II}_1&=\sum_{b=0}^{2m+1} \sum_{a=0}^{b} \sum_{c \geq 0} q^{\binom{2c}{2} -(b-1)(2m-b-2c+1) -a(b-a-1)-2a +2(b+2c-2m-1)} 
[b-a] p^{(2m-b-2c+1)}(\kappa) 
\\
&\qquad\qquad \qquad \cdot 
  \Y^{(a)}   
      \qbinom{\kk;-m}{c} K^{b-2m+2c-1} F^{(b-a)}.
\end{align*}
By shifting the indices $a\to a+1$, $b\to b+1$ and $c\to c-1$, we further have
\begin{align*}
\texttt{II}_2
&=\sum_{b=0}^{2m+1} \sum_{a=0}^{b} \sum_{c \geq 0} q^{\binom{2c-2}{2} -(b+1)(2m-b-2c+1) -(a+1)(b-a)} 
p^{(2m-b-2c+1)}(\kappa) 
\\
& \qquad\qquad \qquad \cdot 
  \Y^{(a)} \frac{q^{-3a}K^{-2}-q^{-a}}{q^2-1}
   \qbinom{\kk;1-m}{c-1} K^{b-2m+2c-1} F^{(b-a)}.
\end{align*}
Using \eqref{recursive:f} we also compute
\begin{align*}
\texttt{III}  
%
&= \sum_{b=0}^{2m+1} \sum_{a=0}^{b} \sum_{c \geq 0} q^{\binom{2c}{2} -b(2m-b-2c) -a(b-a) -2a} 
[2m-b-2c+1]  
\\
&\qquad\qquad\qquad\qquad \cdot p^{(2m-b-2c+1)}(\kappa) \Y^{(a)}  \qbinom{\kk;1-m}{c} K^{b-2m+2c-1} F^{(b-a)}
\\
& \quad + \sum_{b=0}^{2m+1} \sum_{a=0}^{b} \sum_{c \geq 0} q^{\binom{2c}{2} -b(2m-b-2c) -a(b-a) -2a-4m} 
[2m-b-2c+1]  
\\
&\qquad\qquad\qquad\qquad \cdot p^{(2m-b-2c+1)}(\kappa) \Y^{(a)}  \qbinom{\kk;1-m}{c-1} K^{b-2m+2c-3} F^{(b-a)}.
\end{align*}
(Note that we have shifted the index $c\to c-1$ in the last summand above.)

Collecting the formulae for $\texttt{I}, \texttt{II}_1, \texttt{II}_2$, and $\texttt{III}$, we obtain 
\[
t \cdot \dvev{2m} = 
 \sum_{0 \le a \le b \le 2m+1}  \sum_{c\ge 0} p^{(2m-b-2c+1)}(\kappa)  \Y^{(a)} L_{a,b,c} K^{b-2m+2c-1} F^{(b-a)},
\]
where  
\begin{align*}
L_{a,b,c} := &
q^{\binom{2c}{2} -(b-1)(2m-b-2c+1) -(a-1)(b-a)} [a] \qbinom{\kk;1-m}{c}  
\\
&+ q^{\binom{2c}{2} -(b-1)(2m-b-2c+1) -a(b-a-1)-2a +2(b+2c-2m-1)} [b-a]  \qbinom{\kk;-m}{c}  
\\
&+ q^{\binom{2c}{2}-4c+3 -(b+1)(2m-b-2c+1) -(a+1)(b-a)} 
 \frac{q^{-3a}K^{-2}-q^{-a}}{q^2-1}   \qbinom{\kk;1-m}{c-1}   \\
&+ q^{\binom{2c}{2} -b(2m-b-2c) -a(b-a) -2a} [2m-b-2c+1]  \qbinom{\kk;1-m}{c}  
\\
&+ q^{\binom{2c}{2} -b(2m-b-2c) -a(b-a) -2a-4m} [2m-b-2c+1]  \qbinom{\kk;1-m}{c-1} K^{-2}.
\end{align*}

On the other hand, using \eqref{t2m-1:evev2} (with an index shift $c\to c-1$) and \eqref{t2m+1:evev2} we write 
\begin{align*}
[2m+1] & \dvev{2m+1} +[2m] \dvev{2m-1} 
\\
&=\sum_{0 \le a \le b \le 2m+1} \sum_{c \geq 0} p^{(2m-b-2c+1)}(\kappa) 
\Y^{(a)} R_{a,b,c} K^{b-2m+2c-1} F^{(b-a)}, 
\end{align*}
where  
\begin{align*}
R_{a,b,c} :=&
q^{\binom{2c}{2}+2c -b(2m-b-2c+1) -a(b-a)}   [2m+1] \qbinom{\kk;-m}{c} 
\\
&+ q^{\binom{2c}{2}-2c+1 -b(2m-b-2c+1) -a(b-a)} [2m]  
 \qbinom{\kk;1-m}{c-1}.   
\end{align*}

To prove the formula \eqref{t2m+1:evev2} for $\dvev{2m+1}$, it suffices to show that, for all $a,b,c$,
\begin{equation}
  \label{L=R}
L_{a,b,c}=R_{a,b,c}.
\end{equation}
Canceling the $q$-powers $q^{\binom{2c}{2}+2bc-b(2m-b+1)-a(b-a)}$ on both sides, we see that the identity \eqref{L=R} is equivalent to the following identity, for all $a,b,c$:
\begin{align}
 \label{l=r}
 \begin{split}
q^{2m-2c-a+1} [a] \qbinom{\kk;1-m}{c}  
&+ q^{2c-2m+b-a-1} [b-a] \qbinom{\kk;-m}{c}  
\\
&+ q^{-2m-2c+a+2} \frac{q^{-3a}K^{-2}-q^{-a}}{q^2-1}   \qbinom{\kk;1-m}{c-1}    \\
&+ q^{b -2a} [2m-b-2c+1]  \qbinom{\kk;1-m}{c}  
\\
&+ q^{b -2a -4m} [2m-b-2c+1] \qbinom{\kk;1-m}{c-1} K^{-2}
\\
=& q^{2c} [2m+1] \qbinom{\kk;-m}{c}  
 + q^{-2c+1} [2m] \qbinom{\kk;1-m}{c-1}.  
   \end{split}
\end{align}

By combining the second summand of LHS with the first summand of RHS as well as combining the third summand of LHS with the second summand of RHS, the identity \eqref{l=r} is reduced to the following equivalent identity, for all $a, b, c$:
\begin{equation}  \label{WXYZ}
W+X+Y+Z_1+Z_2=0,
\end{equation}
where  
\begin{align*}
W&=  [a]  \qbinom{\kk;1-m}{c},
\\
X&= q^{4c-2m+b-1} [-2m+b-a-1] \qbinom{\kk;-m}{c},
\\
Y&= \frac{q^{-4m-a+1}K^{-2}-q^{a+1}}{q^2-1}   \qbinom{\kk;1-m}{c-1},  \\
Z_1&=  q^{2c-2m+b -a-1} [2m-b-2c+1] \qbinom{\kk;1-m}{c},
  \\
Z_2&=  q^{2c-6m+b-a-1} [2m-b-2c+1] \qbinom{\kk;1-m}{c-1} K^{-2}.  
\end{align*}

Let us prove the identity \eqref{WXYZ}. Using \eqref{kbinom2}, we can write $X=X_1+X_2$, where 
\begin{align*}
X_1 &=q^{4c-2m+b-1} [-2m+b-a-1] \qbinom{\kk; 1-m}{c},
   \\
X_2 &= - q^{4c-6m+b-1} [-2m+b-a-1] \qbinom{\kk; 1-m}{c-1} K^{-2}.
\end{align*}
Noting that  
\[
\frac{q^{-4m-a+1}K^{-2}-q^{a+1}}{q^2-1}
= q^{-4c-a+1}\frac{( q^{4c-4m}K^{-2}-1)}{q^2-1}
 +  q^{-2c} [-2c-a],
 \] 
we rewrite $Y=Y_1+Y_2$, where
\begin{align*}
Y_1 &=q^{-2c-a} [2c] \qbinom{\kk; 1-m}{c},
   \qquad
Y_2 = q^{-2c} [-2c-a] \qbinom{\kk; 1-m}{c-1}.
\end{align*}
A direct computation shows that
\begin{align*}
W+X_1+Y_1+Z_1 &=
- (q-q^{-1}) [2c+a] [2c]  \qbinom{\kk; 1-m}{c},
\\
(X_2+Z_2) +Y_2 
&= q^{2c-4m} [2c+a] \qbinom{\kk; 1-m}{c-1} K^{-2} +Y_2
\\
&= (q-q^{-1}) [2c+a] [2c] \qbinom{\kk; 1-m}{c}.
\end{align*}
Summing up these two equations, we obtain $W+X+Y+Z_1+Z_2=0$, whence \eqref{WXYZ}, completing Step ~(2).

The proof of Theorem~\ref{thm:dvev:evKappa} is completed. \qed

\section{The $\imath$-divided powers $\dv{n}$ for odd weights and even $\ka$}
  \label{sec:oddevK}

In this section we shall always take $\ka$ to be an even $q$-integer, i.e., 
\[
\ka =[2\ell], \qquad  \text{ for } \ell  \in \Z.
\]  

\subsection{Definition of $\dv{n}$ for even $\ka$}

We introduce some notations. Set, for $n\ge 1, a\in \Z$, 
\begin{equation}   \label{brace}
\LR{\kk;a}{0}=1, 
\qquad
\LR{\kk;a}{n}= \prod_{i=1}^n \frac{q^{4a+4i-4} K^{-2}-q^2}{q^{4i}-1}, 
\qquad \llbracket \kk;a\rrbracket = \LR{\kk;a}{1}.
\end{equation}
It follows from \eqref{brace} that, for $n\ge 0$ and $a\in \Z$, 
\begin{align}
 \label{eq:commLR}
 \begin{split}
\LR{\kk;a}{n} F &= F  \LR{\kk;a+1}{n},
\qquad
\LR{\kk;a}{n} \Y =\Y \LR{\kk;a-1}{n},
\\
\LR{\kk;a}{n} &=\LR{\kk;1+a}{n} -q^{4a}K^{-2} \LR{\kk;1+a}{n-1}.  
  \end{split}
\end{align}

We shall take the following as a definition of the $\imath$-divided powers $\dv{n}$. 

\begin{definition}
Set $\dv{1} =t= F +\Y +\ka K^{-1}$. The divided powers $\dv{n}$, for $n\ge 1$, are defined by the  recursive relations: 
\begin{align}   
  \label{eq:tt:oddevKa}
\begin{split}
t \cdot \dv{2a-1} &=[2a] \dv{2a}+   [2a-1] \dv{2a-2},
\\
t \cdot \dv{2a} &=  [2a+1] \dv{2a+1} , \quad \text{ for } a\ge 1.
\end{split}
\end{align}   
Equivalently, $\dv{n}$ is defined by the closed formula \eqref{def:dv:evoddK}.
\end{definition}

The following lemma is a variant of \cite[Lemma~3.3]{BeW18} (where $\ka=0$) with the same proof.

\begin{lem}   
  \label{lem:anti2}
The anti-involution $\vs$ on $\U$ fixes $F,  \Y,  K,  \dvd{n}$, respectively. Moreover, $\vs$ sends 
$q \mapsto q^{-1}$,  and 
\[
\LR{\kk;a}{n}\mapsto (-1)^n q^{2n(n-1)}  \LR{\kk;2-a-n}{n}, \quad \forall a \in \Z,\; n\in \N. 
\]
\end{lem}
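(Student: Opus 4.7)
The plan is to verify the three assertions separately: $\vs$-invariance of $F, \Y, K$ is a one-line check, $\vs$-invariance of $\dv{n}$ is an induction driven by the defining recursion, and the formula for $\vs(\LR{\kk;a}{n})$ is a direct $q$-algebra computation.

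First I would handle the generators. The maps $\vs: F \mapsto F$ and $\vs: K \mapsto K$ are immediate from \eqref{eq:vs}. For $\Y = q^{-1}EK^{-1}$, since $\vs$ is an anti-homomorphism sending $q \mapsto q^{-1}$, $E \mapsto E$ and $K^{\pm 1} \mapsto K^{\pm 1}$, we have $\vs(\Y) = q \cdot K^{-1} E = q \cdot q^{-2} E K^{-1} = \Y$, using $KE = q^2 EK$ (equivalently $K^{-1}E = q^{-2}EK^{-1}$). Also, $\ka = [2\ell]$ is $\vs$-fixed because $[m] = (q^m - q^{-m})/(q - q^{-1})$ is invariant under $q \mapsto q^{-1}$.

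Second, I would prove $\vs(\dv{n}) = \dv{n}$ by induction on $n$. The base case $n=1$ is $\dv{1} = t = F + \Y + \ka K^{-1}$, each summand of which is $\vs$-fixed by the first step. For the inductive step, observe that every $\dv{k}$ is a polynomial in $t$ and hence commutes with $t$. Applying $\vs$ to the recursion \eqref{eq:tt:oddevKa} and using that $\vs$ reverses products (but $\dv{k}$ and $t$ commute), one obtains
\[
t \cdot \vs(\dv{2a-1}) = [2a]\,\vs(\dv{2a}) + [2a-1]\,\vs(\dv{2a-2}), \qquad t \cdot \vs(\dv{2a}) = [2a+1]\,\vs(\dv{2a+1}).
\]
The lower-index $\vs(\dv{k})$'s equal $\dv{k}$ by induction, so the recursion forces $\vs(\dv{n}) = \dv{n}$.

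Third, I would compute $\vs(\LR{\kk;a}{n})$ directly from \eqref{brace}. The factors involve only $K^{\pm 1}$ and $q^{\pm 1}$ and mutually commute, so $\vs$ acts termwise:
\[
\vs(\LR{\kk;a}{n}) = \prod_{i=1}^n \frac{q^{-4a-4i+4}K^{-2} - q^{-2}}{q^{-4i} - 1}.
\]
Setting $u = q^{-4a}K^{-2}$ and clearing negative $q$-powers in each factor, the left-hand side rewrites as $q^{2n^2}\prod_{i=1}^n (1 - q^{6-4i}u)/(q^{4i}-1)$. On the other hand, factoring $q^2$ from each numerator of $\LR{\kk;2-a-n}{n}$ and absorbing the sign $(-1)^n$, the claimed right-hand side $(-1)^n q^{2n(n-1)}\LR{\kk;2-a-n}{n}$ simplifies to $q^{2n^2}\prod_{i=1}^n(1 - q^{2-4n+4i}u)/(q^{4i}-1)$. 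Equality of the two products follows because the index sets $\{6-4i : 1 \le i \le n\}$ and $\{2-4n+4i : 1 \le i \le n\}$ coincide under the substitution $i \leftrightarrow n+1-i$.

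The main obstacle is the reindexing identification in the last step, but it is routine bookkeeping. The only genuine novelty over \cite[Lemma~3.3]{BeW18} is verifying the $\ka K^{-1}$ term of $t$ is $\vs$-fixed, which reduces to the $q \mapsto q^{-1}$ invariance of $q$-integers.
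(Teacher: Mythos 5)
Your proof is correct, and it follows the route the paper intends: the paper gives no proof here, simply remarking that the lemma is a variant of \cite[Lemma~3.3]{BeW18} ``with the same proof,'' and your three steps (generator check, induction via the recursion in $t$, and the reindexing $i\leftrightarrow n+1-i$ in the product defining $\LR{\kk;a}{n}$) are exactly that standard computation, including the one genuinely new point that $\ka K^{-1}$ is $\vs$-fixed because $q$-integers are bar-invariant. The only cosmetic mismatch is that the lemma as printed says $\dvd{n}$ while you argue for $\dv{n}$; since both families are polynomials in $t$ defined by recursions with $q$-integer coefficients, your induction applies verbatim to either.
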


\subsection{Formulae of $\dv{n}$ with even $\ka$}

Recall the polynomials $p^{(n)}$, for $n\ge 0$, from \S\ref{subsec:pn}.

\begin{thm}  
  \label{thm:dv:evenKappa}
Let $\ka$ be an even $q$-integer. Then we have, for $m\ge 0$,  
\begin{align}
\dv{2m} &= 
\sum_{b=0}^{2m}  \sum_{a=0}^{b} \sum_{c \geq 0} q^{\binom{2c}{2} - b(2m-b-2c) -a(b-a)} 
p^{(2m-b-2c)}(\kappa) 
\label{t2m:oddev} \\
&\qquad \qquad \qquad\quad \cdot \Y^{(a)}  \LR{\kk;1-m}{c} K^{b-2m+2c} F^{(b-a)}, 
\notag
\\%
\dv{2m+1} &= 
\sum_{b=0}^{2m+1} \sum_{a=0}^{b} \sum_{c \geq 0} q^{\binom{2c}{2}-2c - b(2m-b-2c+1) -a(b-a)}
 p^{(2m-b-2c+1)}(\kappa) 
\label{t2m+1:oddev}
\\
&\qquad\qquad\qquad\quad \cdot \Y^{(a)}  \LR{\kk;1-m}{c} K^{b-2m+2c-1} F^{(b-a)}. 
\notag
\end{align}
\end{thm}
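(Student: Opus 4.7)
The proof will follow the inductive scheme used for Theorem~\ref{thm:dvev:evKappa}, adapted to the odd-weight recursion \eqref{eq:tt:oddevKa} and to the bracket symbols $\LR{\kk;a}{n}$ from \eqref{brace} (which replace the $\qbinom{\kk;a}{n}$). The base cases $m=0$ in \eqref{t2m:oddev} (giving $\dv{0}=1$) and $m=0$ in \eqref{t2m+1:oddev} (giving $\dv{1}=F+\Y+\ka K^{-1}$) are direct verifications.

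For the inductive step I proceed in two cases, matching the two lines of \eqref{eq:tt:oddevKa}. In Case (a), assuming \eqref{t2m:oddev} for $\dv{2m}$, I use $t\cdot\dv{2m}=[2m+1]\dv{2m+1}$ to derive \eqref{t2m+1:oddev}. Writing $t=F+\Y+\ka K^{-1}$, I compute the three pieces $\Y\cdot\dv{2m}$, $F\cdot\dv{2m}$, and $\ka K^{-1}\cdot\dv{2m}$ separately. The first uses $\Y\Y^{(a)}=[a+1]\Y^{(a+1)}$ together with the shift $\LR{\kk;a}{c}\Y=\Y\LR{\kk;a-1}{c}$ from \eqref{eq:commLR}; the second uses \eqref{FYn} to split into two summands; the third uses the $p^{(n)}$-recursion \eqref{recursive:f} together with the shift \eqref{eq:commLR}, producing two summands at levels $c$ and $c-1$. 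After re-indexing all resulting sums to share a common triple $(a,b,c)$ and matching the PBW monomial $\Y^{(a)}\,K^{b-2m+2c-1} F^{(b-a)}$, the claim reduces to a single five-term polynomial identity in $K^{-2}$, which is the $\LR{\;}{\;}$-analogue of \eqref{WXYZ}. In Case (b), assuming the formulas for $\dv{2m-1}$ and $\dv{2m-2}$, I use $t\cdot\dv{2m-1}=[2m]\dv{2m}+[2m-1]\dv{2m-2}$ to derive \eqref{t2m:oddev}; the analogous bookkeeping leads to a five-term identity in the spirit of \eqref{ABCD}.

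Each of the two key identities is proved by the same mechanism as in \S\ref{subsec:proof:dvev:evKappa}: one splits the term coming from \eqref{FYn} via
\[
\frac{q^{-3a}K^{-2}-q^{-a}}{q^2-1}=q^{4m-4c-3a}\frac{q^{4c-4m}K^{-2}-1}{q^2-1}+q^{2m-2c-2a-1}[2m-2c-a],
\]
and one splits each $\LR{\kk;a}{c}$ factor appearing with a $K^{-2}$ using the three-term relation \eqref{eq:commLR}. This reorganizes the five summands into two quadruples, each of which simplifies to a common multiple of $(q-q^{-1})[2c][\cdots]\LR{\kk;1-m}{c}$ with opposite signs, giving cancellation.

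The main obstacle is purely computational: tracking the $q$-exponents through the many index shifts. The exponent accounting here differs from that in the proof of Theorem~\ref{thm:dvev:evKappa} in two systematic ways: first, the bracket $\LR{\kk;a}{n}$ carries an extra $-q^2$ in the numerator (versus $-1$ for $\qbinom{\kk;a}{n}$), which shifts the $K^{-2}$-free part of the three-term relation; second, the role of the two lines of the recursion is swapped relative to \eqref{eq:tt}, so the parities of the $q$-exponents $\binom{2c}{2}\pm 2c$ and the $K$-exponents $b-2m+2c$ versus $b-2m+2c\pm 1$ shift by a constant. Once these offsets are absorbed into the ansatz, the cancellation mechanism is identical to the even-weight case, and no new conceptual ingredient is needed.
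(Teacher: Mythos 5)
Your proposal follows the paper's proof essentially verbatim: the same two-case induction driven by the recursion \eqref{eq:tt:oddevKa}, the same decomposition of $t\cdot\dv{n}$ into the $\Y$-, $F$-, and $\ka K^{-1}$-pieces with the same re-indexings, and the same reduction to a five-term identity resolved by splitting via \eqref{eq:commLR}. The one caveat is that the splitting identity you display is the $\qbinom{\kk;a}{c}$ version from \S\ref{subsec:proof:dvev:evKappa}; as you yourself note in the next paragraph, for $\LR{\kk;a}{c}$ the numerator must be $q^{4c-4m}K^{-2}-q^{2}$ (so the complementary term becomes $q^{2m-2c-2a}[2m-2c-a+1]$), which is exactly what the paper uses.
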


The proof of Theorem~\ref{thm:dv:evenKappa} will be given in \S\ref{subsec:proof:dv:evKappa} below. 
By applying the anti-involution $\vs$ we convert the formulae in Theorem~\ref{thm:dv:evenKappa} into the following.

\begin{thm}  
  \label{thm:dv:evenKappa2}
Let $\ka$ be an even $q$-integer. Then we have, for $m\ge 0$,  
\begin{align}
\dv{2m} &= 
\sum_{b=0}^{2m}  \sum_{a=0}^{b} \sum_{c \geq 0} (-1)^c q^{-c +b(2m-b-2c) +a(b-a)} 
p^{(2m-b-2c)}(\kappa) 
\label{t2m:oddevFhE}  \\
&\qquad \qquad \qquad\qquad \cdot F^{(b-a)} K^{b-2m+2c} \LR{\kk;1+m-c}{c}  \Y^{(a)}
\notag
\\
\dv{2m+1} &=  
\sum_{b=0}^{2m+1} \sum_{a=0}^{b} \sum_{c \geq 0} (-1)^c q^{c +b(2m-b-2c+1) +a(b-a)}
 p^{(2m-b-2c+1)}(\kappa) 
\label{t2m+1:oddevFhE}  \\
&\qquad\qquad\qquad\qquad \cdot F^{(b-a)} K^{b-2m+2c-1} \LR{\kk;1+m-c}{c}  \Y^{(a)}.
\notag
\end{align}
\end{thm}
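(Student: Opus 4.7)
The plan is to apply the anti-involution $\vs$ of Lemma~\ref{lem:anti2} to the expansion formulae \eqref{t2m:oddev} and \eqref{t2m+1:oddev} of Theorem~\ref{thm:dv:evenKappa}, following the same strategy used to derive Theorem~\ref{thm:dvev:evKappa2} from Theorem~\ref{thm:dvev:evKappa}.

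First I would verify that $\dv{n}$ is $\vs$-invariant as an element of $\U$. This follows because $\vs$ fixes $t = F + \Y + \ka K^{-1}$ (using that $\ka = [2\ell]$ is bar-invariant and $\vs$ fixes $F$, $\Y$, $K^{-1}$), and hence fixes every polynomial in $t$ with bar-invariant coefficients; the closed formula \eqref{def:dv:evoddK} exhibits $\dv{n}$ as precisely such a polynomial. With this in hand, applying $\vs$ to both sides of \eqref{t2m:oddev} leaves the left-hand side unchanged and yields a new expansion for the right-hand side in $\U$.

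Next, I would carefully track the transformations under $\vs$. The product $\Y^{(a)} \LR{\kk;1-m}{c} K^{b-2m+2c} F^{(b-a)}$ is reversed into $F^{(b-a)} K^{b-2m+2c} \cdot \vs(\LR{\kk;1-m}{c}) \cdot \Y^{(a)}$, and by Lemma~\ref{lem:anti2} the bracket transforms as $\vs(\LR{\kk;1-m}{c}) = (-1)^c q^{2c(c-1)} \LR{\kk;1+m-c}{c}$. The original $q$-exponent $\binom{2c}{2} - b(2m-b-2c) - a(b-a)$ is negated by $\vs$, so combining with the $q^{2c(c-1)}$ contributed by the bracket gives total exponent
\[
-\binom{2c}{2} + 2c(c-1) + b(2m-b-2c) + a(b-a) = -c + b(2m-b-2c) + a(b-a),
\]
which coincides with the exponent in \eqref{t2m:oddevFhE}. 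An analogous bookkeeping for \eqref{t2m+1:oddev}, starting from exponent $\binom{2c}{2} - 2c - b(2m-b-2c+1) - a(b-a)$, produces
\[
-\binom{2c}{2} + 2c + 2c(c-1) + b(2m-b-2c+1) + a(b-a) = c + b(2m-b-2c+1) + a(b-a),
\]
which is precisely the exponent in \eqref{t2m+1:oddevFhE}.

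The only work in the argument is therefore a careful accounting of signs, $q$-powers, and reversed products under $\vs$, in direct parallel with the proof of Theorem~\ref{thm:dvev:evKappa2}. The main (modest) obstacle is correctly invoking the index shift $a \mapsto 2-a-n$ in the bracket transformation rule from Lemma~\ref{lem:anti2} with $a = 1-m$ and $n = c$, so that $\LR{\kk;1-m}{c}$ is sent to $\LR{\kk;1+m-c}{c}$ rather than a shifted variant; after this, everything reduces to elementary exponent arithmetic and requires no new induction or combinatorial identity.
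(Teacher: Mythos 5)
Your proposal is correct and follows exactly the paper's own route: apply the anti-involution $\vs$ of Lemma~\ref{lem:anti2} to the formulae of Theorem~\ref{thm:dv:evenKappa}, using that $\vs$ fixes $\dv{n}$ (as a polynomial in $t$ with bar-invariant coefficients) and sends $\LR{\kk;1-m}{c}\mapsto (-1)^c q^{2c(c-1)}\LR{\kk;1+m-c}{c}$, and your exponent bookkeeping ($-\binom{2c}{2}+2c(c-1)=-c$, etc.) checks out. The paper's proof is just a terser version of the same argument.
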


\begin{proof}
Recall from Lemma~\ref{lem:anti2} that the anti-involution $\vs$ on $\U$   
fixes $F,  \Y,  K,  \dvd{n}, \ka$ while sending
$q \mapsto q^{-1}$, 
$\LR{\kk;1-m}{c}\mapsto (-1)^c q^{2c(c-1)}  \LR{\kk;1+m-c}{c}$.
The formulae  \eqref{t2m:oddevFhE}--\eqref{t2m+1:oddevFhE} now follows from \eqref{t2m:oddev}--\eqref{t2m+1:oddev}.


\end{proof}

  Let $\la, m \in \Z$ and $c\in \N$.  Recall $\cbinom{m}{c}$ from \eqref{cbinom}.
 We note that
\begin{align}
  \label{eq:LRhc}
\LR{\kk; 1+m-c}{c} \one_{2\la+1}  
&= \prod_{i=1}^c \frac{q^{4m-4c+4i} K^{-2} -q^2}{q^{4i} -1} \one_{2\la+1}   
= (-1)^c q^{-2c^2} \cbinom{m-\la-c}{c} \one_{2\la+1}.
\end{align}

The following corollary is immediate from \eqref{eq:LRhc}, Proposition~\ref{prop:fg:ev}, and Theorem~\ref{thm:dv:evenKappa}. 
\begin{cor}  
We have $\dv{n} \in {}_\A \Ui_{\rm{odd}}$, for all $n$. 
\end{cor}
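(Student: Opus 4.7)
The plan is to invoke the second characterization of ${}_\A \Ui_{\rm odd}$ recalled earlier from \cite{BW16}, namely
\[
{}_\A \Ui_{\rm odd} = \bigl\{ x \in \Ui \mid x\cdot L(\mu)_\A \subseteq L(\mu)_\A \text{ for all odd } \mu \bigr\},
\]
or equivalently that $\dv{n}\cdot u \in \UAdot_{\rm odd}$ for every $u \in \UAdot_{\rm odd}$; it suffices to verify this on the generators $\one_{2\la+1}$. The explicit expansions in Theorem~\ref{thm:dv:evenKappa2}, written in the $FKE$-ordered form \eqref{t2m:oddevFhE}--\eqref{t2m+1:oddevFhE}, are set up precisely so that the weight-dependent factor $\LR{\kk;1+m-c}{c}$ meets $\one_{2\la+1}$ immediately upon evaluation.

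After applying $\dv{n}$ to $\one_{2\la+1}$, each summand is a scalar multiple of
\[
p^{(n')}(\ka)\; F^{(b-a)}\, K^{s}\, \LR{\kk;1+m-c}{c}\, \Y^{(a)}\, \one_{2\la+1},
\]
where the overall sign and $q$-power prefactor is visibly in $\A$ and $n', s$ are integers determined by $m,a,b,c$. Three inputs now force every such summand to lie in $\UAdot_{\rm odd}$: first, since $\ka$ is an even $q$-integer, Proposition~\ref{prop:fg:ev} gives $p^{(n')}(\ka) \in \A$; second, by \eqref{eq:LRhc},
\[
\LR{\kk;1+m-c}{c}\one_{2\la+1} = (-1)^c q^{-2c^2}\cbinom{m-\la-c}{c}\one_{2\la+1},
\]
with $\cbinom{m-\la-c}{c} \in \A$ by \eqref{cbinom}; third, $\Y^{(a)} = q^{-a^2}E^{(a)}K^{-a}$ and $F^{(b-a)}$ are Lusztig divided powers, so they preserve $\UAdot_{\rm odd}$.

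Combining these three observations, each term of $\dv{n}\one_{2\la+1}$ lies in $\UAdot_{\rm odd}$, and hence so does the whole expression, yielding $\dv{n} \in {}_\A \Ui_{\rm odd}$. There is no real obstacle in this corollary: all the heavy lifting was done in establishing the triple-sum expansion Theorem~\ref{thm:dv:evenKappa} and in the integrality statement Proposition~\ref{prop:fg:ev}. The only point requiring attention is the choice of the $FKE$-ordered form rather than the $EKF$-ordered one, so that the binomial-type factor $\LR{\kk;1+m-c}{c}$ sits adjacent to the weight vector and its action becomes manifestly integral through \eqref{eq:LRhc}.
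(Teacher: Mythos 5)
Your proposal is correct and is exactly the argument the paper intends: the corollary is stated there as immediate from the triple-sum expansion (Theorem~\ref{thm:dv:evenKappa}/\ref{thm:dv:evenKappa2}), the integrality $p^{(n)}(\ka)\in\A$ of Proposition~\ref{prop:fg:ev}, and the evaluation \eqref{eq:LRhc} of the $\kk$-binomial factor on odd-weight idempotents, which is precisely what you assemble. The only tiny imprecision is that in the $FKE$-ordered form the factor $\LR{\kk;1+m-c}{c}$ does not literally meet $\one_{2\la+1}$ first (the $\Y^{(a)}$ sits between them), but since $\Y^{(a)}$ preserves odd weights and \eqref{eq:LRhc} holds for every $\la\in\Z$, the conclusion is unaffected.
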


\begin{example}
The formulae of $\dv{n}$, for $1\le n\le 3$, in Theorem~\ref{thm:dv:evenKappa} reads as follows. 
\begin{align*}
\dv{1} 
&= F +\Y + \ka K^{-1},
\\%
\dv{2} 
&=  b^{(2)}  + q\llbracket \kk;0 \rrbracket + \ka (q^{-1}K^{-1}F + q^{-1} \Y K^{-1})  +  \frac{\ka^{2}}{[2]} K^{-2},  
 \\%
 \dv{3}  &   = b^{(3)} + q^{-1} \llbracket \kk;0 \rrbracket F + q^{-1} \Y \llbracket \kk;0 \rrbracket
\\
 & \qquad +(q^{-2} \Y^{(2)} K^{-1} + q^{-3} \Y K^{-1} F + q^{-2} K^{-1} F^{(2)} + q^{-1} \llbracket \kk;0 \rrbracket K^{-1})\ka
\\
 & \qquad + \frac{q^{-2}}{[2]}(\Y K^{-2} + K^{-2} F)\ka^2
 + \frac{K^{-3}}{[3]!}\big( \ka^3 + (q^{-4} + q^{-2})\ka \big).
\end{align*}
\end{example}

\begin{rem}
The formula for $\dv{2m}$ is formally obtained from $\dvev{2m}$ in Theorem~\ref{thm:dvev:evKappa},
with $\qbinom{h;a}{c}$ replaced by $\LR{h;a}{c}$.
The formula for $\dv{2m-1}$ is formally obtained from $\dvev{2m-1}$ in Theorem~\ref{thm:dvev:evKappa},
with $\qbinom{h;a}{c}$ replaced by $q^{-4c}\LR{h;a+1}{c}$.
\end{rem}

\begin{rem}   
Note $p_n(0)=0$ for $n>0$, and $p_0(0)=1$. 
The formulae in Theorems~\ref{thm:dv:evenKappa} and  \ref{thm:dv:evenKappa2} in the special case for $\ka=0$ recover the formulae in \cite[Theorem~3.1, Proposition~3.4]{BeW18}.
\end{rem}

\subsection{The $\imath$-canonical basis for $\Udot_{\text{odd}}$ for even $\ka$}
  \label{sec:iCB:odd}
  
Recall from \S\ref{sec:iCB:ev} the $\imath$-canonical basis on simple $\U$-modules $L(\mu)$, for $\mu \in \N$. 
  
\begin{thm}  \label{thm:iCB:odd}
    $\quad$
\begin{enumerate}
\item
Let $n\in \N$. 
For each integer $\la \gg n$, the element $\dv{n} \vodd$ is an $\imath$-canonical basis element for $L(2\la+1)$. 
 
 \item
The set $\{\dv{n} \mid n \in \N \}$ forms the $\imath$-canonical basis for ${\U}^\imath$ (and an $\A$-basis in ${}_\A \Ui_{\rm{odd}}$). 
\end{enumerate}
\end{thm}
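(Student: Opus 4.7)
The plan is to mirror, in the odd-weight setting, the proof of Theorem~\ref{thm:iCB:ev} given in \S\ref{sec:iCB:ev}. First I would apply the expansion formulae of Theorem~\ref{thm:dv:evenKappa2} to the highest weight vector $\vodd\in L(2\la+1)$. Since $E\vodd=0$ and $\Y=q^{-1}EK^{-1}$, we have $\Y^{(a)}\vodd=0$ for every $a\ge 1$, so only the $a=0$ summands survive. Using $K\vodd=q^{2\la+1}\vodd$ together with the specialization \eqref{eq:LRhc}, a short computation of $q$-exponents gives, for $\la,m\in \N$,
\begin{align*}
\dv{2m}\vodd
&= \sum_{b=0}^{2m}\sum_{c\ge 0} q^{(b-2\la-1)(2m-b-2c)-2c^2-c}\, p^{(2m-b-2c)}(\ka)\, \cbinom{m-\la-c}{c}\, F^{(b)}\vodd,\\
\dv{2m+1}\vodd
&= \sum_{b=0}^{2m+1}\sum_{c\ge 0} q^{(b-2\la-1)(2m-b-2c+1)-2c^2+c}\, p^{(2m-b-2c+1)}(\ka)\, \cbinom{m-\la-c}{c}\, F^{(b)}\vodd,
\end{align*}
which are the exact odd-weight analogues of \eqref{dvev:2m-mod}--\eqref{dvev:2m-1-mod}.

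Next I would run the same triangularity estimate as in Theorem~\ref{thm:iCB:ev}: by \eqref{eq:q-1} one has $\cbinom{m-\la-c}{c}\in \N[q^{-1}]$ whenever $\la\ge m$, while Proposition~\ref{prop:fg:ev} combined with the degree computation $\deg_q p^{(n)}(\ka) = (2\ell-1)n-\binom{n}{2}$ shows that for $\la\gg n$ and any $b+2c<n$ the coefficient of $F^{(b)}\vodd$ lies in $q^{-1}\Z[q^{-1}]$. The unique $(b,c)=(n,0)$ contribution has coefficient $p^{(0)}(\ka)=1$, hence
\[
\dv{n}\vodd \in F^{(n)}\vodd + \sum_{b<n} q^{-1}\Z[q^{-1}]\, F^{(b)}\vodd \qquad (\la\gg n).
\]
Combined with $\psi_\imath$-invariance of $\dv{n}\vodd$ (inherited from $\dv{n}$), the characterization (iCB1)--(iCB2) of \S\ref{sec:iCB:ev} applied to $L(2\la+1)$ identifies $\dv{n}\vodd$ with the $\imath$-canonical basis element $b^{2\la+1}_{n}$, proving~(1).

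For~(2), I would invoke the projective limit construction of the $\imath$-canonical basis of $\Udot^{\imath}$ from \cite[\S6]{BW16}, now applied to the inverse system $\{L(2\la+1)\}_{\la\in\N}$ equipped with the $\Ui$-equivariant projections $L(2\la+3)\to L(2\la+1)$. Since $\dv{n}$ is $\psi_\imath$-invariant and the statement of~(1) holds uniformly for all sufficiently large $\la$, the family $\{\dv{n}\mid n\in\N\}$ satisfies both defining conditions for the $\imath$-canonical basis of $\Ui$ and hence furnishes an $\A$-basis of ${}_\A \Ui_{\rm{odd}}$. The only step requiring genuine care is the bookkeeping in the first paragraph: one must track how the passage $\qbinom{\kk;a}{c}\leadsto \LR{\kk;a}{c}$ in the formulae and the shifted evaluation $K^{\pm 1}\vodd=q^{\pm(2\la+1)}\vodd$ produce exactly the signed half-integer shifts $-2c^2 \mp c$ in the exponents above; this is mechanical but is where the odd-weight argument diverges numerically from the even-weight one, so it is the most error-prone step.
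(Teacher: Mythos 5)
Your proposal is correct and follows essentially the same route as the paper: the same specialization of Theorem~\ref{thm:dv:evenKappa2} via \eqref{eq:LRhc} yielding exactly the expansions \eqref{t2m:oddevF}--\eqref{t2m+1:oddevF}, the same triangularity estimate transported from the proof of Theorem~\ref{thm:iCB:ev}, and the same appeal to the projective system $\{L(2\la+1)\}$ for part (2). The paper simply cites ``a similar argument for \eqref{eq:lattice}'' where you spell out the degree bookkeeping explicitly.
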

 
 \begin{proof}
  Recall $\cbinom{m}{c}$ from \eqref{cbinom} and $\LR{\kk; a}{c}$ from \eqref{brace}.  Let $\la, m \in \N$.  
It follows by a direct computation using Theorem~\ref{thm:dv:evenKappa2} and \eqref{eq:LRhc} that 
\begin{align}
\dv{2m} \vodd &= 
\sum_{b=0}^{2m} \sum_{c \geq 0} (-1)^c q^{-c +b(2m-b-2c)} 
p^{(2m-b-2c)}(\kappa) 
\notag
 \\
&\qquad \qquad \qquad\qquad \cdot F^{(b)} K^{b-2m+2c} \LR{\kk;1+m-c}{c}  \vodd
\notag
\\
&=  \sum_{b=0}^{2m} \sum_{c \geq 0} q^{-2c^2-c +(b-2\la-1)(2m-b-2c)}  p^{(2m-b-2c)}(\kappa) \cbinom{m-\la-c}{c}  F^{(b)}  \vodd.
\label{t2m:oddevF}  
\end{align}
Similarly using Theorem~\ref{thm:dv:evenKappa2} we have 
\begin{align}
\dv{2m+1} \vodd &=  
\sum_{b=0}^{2m+1} \sum_{c \geq 0} (-1)^c q^{c +b(2m-b-2c+1)}
 p^{(2m-b-2c+1)}(\kappa) 
\notag  \\
&\qquad\qquad\qquad\qquad \cdot F^{(b)} K^{b-2m+2c-1} \LR{\kk;1+m-c}{c} \vodd
\notag
\\
&=  \sum_{b=0}^{2m+1} \sum_{c \geq 0} q^{-2c^2+c +(b-2\la-1)(2m-b-2c+1)} p^{(2m-b-2c+1)}(\kappa) \cbinom{m-\la-c}{c} F^{(b)} \vodd. 
\label{t2m+1:oddevF} 
\end{align}

By a similar argument for \eqref{eq:lattice}, using \eqref{t2m:oddevF}--\eqref{t2m+1:oddevF}  we obtain
 \begin{align*}
 \dv{n} \vodd  & \in F^{(n)}  \vodd + \sum_{b<n}q^{-1} \Z[q^{-1}] F^{(b)} \vodd,
 \qquad \text{ for } \la \gg n.
 \end{align*}
 
The second statement follows now from the definition of the $\imath$-canonical basis for $\Ui$ using
the projective system $\{ L(2\la+1) \}_{\la\ge 0}$; cf. \cite[\S6]{BW18b}. 
 \end{proof}

\subsection{Proof of Theorem~\ref{thm:dv:evenKappa} }
   \label{subsec:proof:dv:evKappa}

 
We prove the formulae for $\dv{n}$ by induction on $n$, in two steps (1)--(2) below. The base cases when $n=1,2$ are clear. 

(1) We shall prove the formula \eqref{t2m+1:oddev} for $\dv{2m+1}$, assuming the formula \eqref{t2m:oddev} for $\dv{2m}$.

Recall $[2m+1] \dvev{2m+1} = t \cdot \dv{2m}$, and $t= F +\Y +\ka K^{-1}$. 
Let us compute 
\[
I=\Y  \dv{2m},
\qquad
II=F \dv{2m},
\qquad
III=\ka K^{-1} \dv{2m}.
\]

First we have
\begin{align*}
I 
&= \sum_{b=0}^{2m}  \sum_{a=0}^{b} \sum_{c \geq 0} 
q^{\binom{2c}{2} - b(2m-b-2c) -a(b-a)} [a+1] p^{(2m-b-2c)}(\kappa) 
 \\
&\qquad \qquad \qquad\quad \cdot \Y^{(a+1)}  \LR{\kk;1-m}{c} K^{b-2m+2c} F^{(b-a)}
\\%
&= \sum_{b=0}^{2m+1}  \sum_{a=0}^{b} \sum_{c \geq 0} 
q^{\binom{2c}{2} - (b-1)(2m-b-2c+1) -(a-1)(b-a)} [a] p^{(2m-b-2c+1)}(\kappa) 
 \\
&\qquad \qquad \qquad\quad \cdot \Y^{(a)}  \LR{\kk;1-m}{c} K^{b-2m+2c-1} F^{(b-a)},
\end{align*}
where the last equation is obtained by shifting indices $a\to a-1$, $b\to b-1$ (and then adding some zero terms to make the bounds of the summations uniform throughout).
Using \eqref{FYn} we  have
\begin{align*}
II  
&=   \sum_{b=0}^{2m}  \sum_{a=0}^{b} \sum_{c \geq 0} 
q^{\binom{2c}{2} - b(2m-b-2c) -a(b-a)}  p^{(2m-b-2c)}(\kappa) 
 \\
&\qquad \qquad \qquad\quad \cdot 
 \left( 
   q^{-2a} \Y^{(a)} F + \Y^{(a-1)} \frac{q^{3-3a}K^{-2}-q^{1-a}}{q^2-1}
  \right )
    \LR{\kk;1-m}{c} K^{b-2m+2c} F^{(b-a)}
\end{align*}  
where $II^1$ and $II^2$ are the two natural summands associated to the plus sign.
By shifting index $b\to b-1$, we further have
\begin{align*}
II^1
&=\sum_{b=0}^{2m+1}  \sum_{a=0}^{b} \sum_{c \geq 0} 
q^{\binom{2c}{2} - (b+1)(2m-b-2c+1) -a(b-a+1)} [b-a] p^{(2m-b-2c+1)}(\kappa) 
 \\
&\qquad \qquad \qquad\quad \cdot 
 \Y^{(a)} 
       \LR{\kk;-m}{c} K^{b-2m+2c-1} F^{(b-a)}. 
\end{align*}
By shifting the indices $a\to a+1$, $b\to b+1$ and $c\to c-1$, we further have
\begin{align*}
II^2 
&=\sum_{b=0}^{2m+1}  \sum_{a=0}^{b} \sum_{c \geq 0} 
q^{\binom{2c-2}{2} - (b+1)(2m-b-2c+1) -(a+1)(b-a)}  p^{(2m-b-2c+1)}(\kappa) 
 \\
&\qquad \qquad \qquad\quad \cdot 
 \Y^{(a)} \frac{q^{-3a}K^{-2}-q^{-a}}{q^2-1}
    \LR{\kk;1-m}{c-1} K^{b-2m+2c-1} F^{(b-a)}. 
\end{align*}  
Using \eqref{recursive:f}
we also compute
\begin{align*}
III 
&= \sum_{b=0}^{2m}  \sum_{a=0}^{b} \sum_{c \geq 0} 
q^{\binom{2c}{2} - b(2m-b-2c) -a(b-a)-2a} 
\ka \cdot p^{(2m-b-2c)}(\kappa) 
 \\
&\qquad \qquad \qquad\quad \cdot \Y^{(a)}  \LR{\kk;1-m}{c} K^{b-2m+2c-1} F^{(b-a)}
 \\
&= \sum_{b=0}^{2m+1}  \sum_{a=0}^{b} \sum_{c \geq 0} 
q^{\binom{2c}{2} - b(2m-b-2c) -a(b-a)-2a} 
[2m-b-2c+1]  
 \\
&\qquad \qquad \qquad\quad \cdot p^{(2m-b-2c+1)}(\kappa) \Y^{(a)}  \LR{\kk;1-m}{c} K^{b-2m+2c-1} F^{(b-a)}
\\%
&\quad 
- \sum_{b=0}^{2m+1}  \sum_{a=0}^{b} \sum_{c \geq 0} 
q^{\binom{2c-2}{2} - (b+2)(2m-b-2c+2) -a(b-a)-2a+1} 
[2m-b-2c+1] 
 \\
&\qquad \qquad \qquad\quad \cdot p^{(2m-b-2c+1)}(\kappa) \Y^{(a)}  \LR{\kk;1-m}{c-1} K^{b-2m+2c-3} F^{(b-a)}.
\end{align*}
(Note that we have shifted the index $c\to c-1$ in the last summand above.)

Collecting the formulae for $I, II^1, II^2$ and $III$  gives us
\[
t \cdot \dv{2m} = 
 \sum_{0 \le a \le b \le 2m+1}  \sum_{c\ge 0}  p^{(2m-b-2c+1)}(\kappa)  \Y^{(a)} 
 \mc H_{a,b,c}  K^{b-2m+2c-1} F^{(b-a)}, 
\]
where   
\begin{align*}
\mc H_{a,b,c}  :=
& \; q^{\binom{2c}{2} - (b-1)(2m-b-2c+1) -(a-1)(b-a)} [a] \LR{\kk;1-m}{c}  
\\
&+ q^{\binom{2c}{2} - (b+1)(2m-b-2c+1) -a(b-a+1)} [b-a] \LR{\kk;-m}{c}  
\\
&+ q^{\binom{2c-2}{2} - (b+1)(2m-b-2c+1) -(a+1)(b-a)}
 \frac{q^{-3a}K^{-2}-q^{-a}}{q^2-1} \LR{\kk;1-m}{c-1}  
\\
&+ q^{\binom{2c}{2} - b(2m-b-2c) -a(b-a)-2a} 
[2m-b-2c+1]  \LR{\kk;1-m}{c}
\\
&- q^{\binom{2c-2}{2} - (b+2)(2m-b-2c+2) -a(b-a)-2a+1} 
[2m-b-2c+1] \LR{\kk;1-m}{c-1} K^{-2}.
\end{align*}
Recall $[2m+1] \dv{2m+1} = t \cdot \dv{2m}$. To prove the formula \eqref{t2m+1:oddev} for $\dv{2m+1}$,  by the PBW basis theorem and the inductive assumption it suffices to prove the following identity, for all $a,b,c$:
\begin{align}  \label{eq:H3}
\mc H_{a,b,c} = q^{\binom{2c}{2}-2c - b(2m-b-2c+1) -a(b-a)} [2m+1] \LR{\kk;1-m}{c}.
\end{align}
Thanks to $q^{2m-a+1} [a] -[2m+1] = q^{-a} [a-2m-1]$, we can combine the RHS with the first summand of LHS \eqref{eq:H3}.
Hence, after canceling out the $q$-powers $q^{\binom{2c}{2}-2c - b(2m-b-2c+1) -a(b-a) -a}$
on both sides, we see that \eqref{eq:H3} is equivalent to the following identity, for all $a, b, c$:
\begin{equation}  \label{ABCD3}
\mc A+\mc B+\mc C+\mc D_1 +\mc D_2=0,
\end{equation}
where 
\begin{align*}
\mc A &= [a-2m-1] \LR{\kk;1-m}{c},
\\
\mc B &= q^{4c-2m+b-1} [b-a]  \LR{\kk;-m}{c},   
\\
\mc C &= q^{2a-2m+2} \frac{q^{-3a}K^{-2}-q^{-a}}{q^2-1} \LR{\kk;1-m}{c-1},   
\\
\mc D_1 &= q^{2c+b-a} [2m-b-2c+1]  \LR{\kk;1-m}{c},
\\
\mc D_2 &= - q^{2c-4m+b-a} [2m-b-2c+1] \LR{\kk;1-m}{c-1} K^{-2}.
\end{align*}

Let us prove the identity \eqref{ABCD3}. Using \eqref{eq:commLR}, we can write $\mc B=\mc B_1+\mc B_2$, where 
\begin{align*}
\mc B_1 &=  q^{4c-2m+b-1} [b-a]  \LR{\kk;-m}{c},
   \quad
\mc B_2 = -  q^{4c-6m+b-1} [b-a]  \LR{\kk;-m}{c}.
\end{align*}
Noting that  
\[
\frac{q^{-3a}K^{-2}-q^{-a}}{q^2-1} = q^{4m-4c-3a}\frac{( q^{4c-4m}K^{-2}-q^2)}{q^2-1}
 +  q^{2m-2c-2a} [2m-2c-a+1],
 \] 
we rewrite $\mc C=\mc C_1+\mc C_2$, where
\begin{align*}
\mc C_1 &= q^{2m-2c-a+1} [2c] \LR{\kk; 1-m}{c},
   \quad
\mc C_2 =   q^{2-2c} [2m-2c-a+1]  \LR{\kk; 1-m}{c-1}.
\end{align*}

A direct computation gives us
\begin{align*}
\mc A+\mc B_1+\mc C_1+\mc D_1 
&= (q-q^{-1}) [2c] [2m-2c-a+1]  \LR{\kk; 1-m}{c},
\\
\mc C_2 +(\mc B_2+\mc D_2)  
&= \mc C_2 -  q^{2c-4m} [2m-2c-a+1] \LR{\kk; 1-m}{c-1} K^{-2}
\\
&= - (q-q^{-1}) [2c] [2m-2c-a+1]  \LR{\kk; 1-m}{c}.
\end{align*}
Summing up these two equations, we have $\mc A+\mc B+\mc C+\mc D_1+\mc D_2=0,$ whence \eqref{ABCD3}, completing Step~(1). 

\vspace{3mm}
(2) Assuming the formulae for $\dv{n}$ with $n\le 2m+1$, we shall now prove the following formula \eqref{t2m+2:oddev} for $\dv{2m+2}$ (obtained with $m$ replaced by $m+1$ in \eqref{t2m:oddev}):
\begin{align}
\dv{2m+2} &= 
\sum_{b=0}^{2m+2}  \sum_{a=0}^{b} \sum_{c \geq 0} q^{\binom{2c}{2} - b(2m-b-2c+2) -a(b-a)} 
p^{(2m-b-2c+2)}(\kappa) 
  \label{t2m+2:oddev}
 \\
&\qquad \qquad \qquad\quad \cdot \Y^{(a)}  \LR{\kk;-m}{c} K^{b-2m+2c-2} F^{(b-a)}. 
\notag
\end{align}
The proof is based on the recursion $t \cdot \dv{2m+1} =[2m+2] \dv{2m+2} +[2m+1] \dv{2m}$. 
Recall $t= F +\Y +\ka K^{-1}$ and $\dv{2m+1}$ from \eqref{t2m+1:oddev}. We shall compute 
\[
\texttt{I}=\Y  \dv{2m+1},
\qquad
\texttt{II}=F \dv{2m+1},
\qquad
\texttt{III} =\ka K^{-1} \dv{2m+1}, 
\]
respectively. First by by shifting indices $a\to a-1$ and $b\to b-1$, we have
\begin{align*}
\texttt{I} 
&=\sum_{b=0}^{2m+1} \sum_{a=0}^{b} \sum_{c \geq 0} q^{\binom{2c}{2}-2c - b(2m-b-2c+1) -a(b-a)}
[a+1] p^{(2m-b-2c+1)}(\kappa) 
\\
&\qquad\qquad\qquad\quad \cdot \Y^{(a+1)}  \LR{\kk;1-m}{c} K^{b-2m+2c-1} F^{(b-a)}
\\
&=\sum_{b=0}^{2m+2} \sum_{a=0}^{b} \sum_{c \geq 0} 
q^{\binom{2c}{2}-2c - (b-1)(2m-b-2c+2) -(a-1)(b-a)}
[a] p^{(2m-b-2c+2)}(\kappa) 
\\
&\qquad\qquad\qquad\quad \cdot \Y^{(a)}  \LR{\kk;1-m}{c} K^{b-2m+2c-2} F^{(b-a)}.
\end{align*}
Using \eqref{FYn} we   also have
\begin{align*}
\texttt{II} 
&= \sum_{b=0}^{2m+1} \sum_{a=0}^{b} \sum_{c \geq 0} q^{\binom{2c}{2}-2c - b(2m-b-2c+1) -a(b-a)}
 p^{(2m-b-2c+1)}(\kappa) 
\\
&\quad \cdot 
 \left( 
   q^{-2a} \Y^{(a)} F + \Y^{(a-1)} \frac{q^{3-3a}K^{-2}-q^{1-a}}{q^2-1}
  \right )
 \LR{\kk;1-m}{c} K^{b-2m+2c-1} F^{(b-a)}
=\texttt{II}_1 +\texttt{II}_2,
\end{align*}
where $\texttt{II}_1$ and $\texttt{II}_2$ are the two natural summands associated to the plus sign.
By shifting the index $b\to b-1$, we have
\begin{align*}
\texttt{II}_1 
&= \sum_{b=0}^{2m+2} \sum_{a=0}^{b} \sum_{c \geq 0} 
q^{\binom{2c}{2}-2c - (b+1)(2m-b-2c+2) -a(b-a-1)-2a}
[b-a] p^{(2m-b-2c+2)}(\kappa) 
\\
&\qquad\qquad\qquad\quad \cdot 
    \Y^{(a)} 
    \LR{\kk;-m}{c} K^{b-2m+2c-2} F^{(b-a)}. 
\end{align*}
By shifting the indices $a\to a+1$, $b\to b+1$ and $c\to c-1$, we further have
\begin{align*}
\texttt{II}_2 
&= \sum_{b=0}^{2m+2} \sum_{a=0}^{b} \sum_{c \geq 0} 
q^{\binom{2c-2}{2}-2c - (b+1)(2m-b-2c+2) -(a+1)(b-a)+2}
 p^{(2m-b-2c+2)}(\kappa) 
\\
&\qquad\qquad\qquad\quad \cdot 
 \Y^{(a)} \frac{q^{-3a}K^{-2}-q^{-a}}{q^2-1}
 \LR{\kk;1-m}{c-1} K^{b-2m+2c-2} F^{(b-a)}.
 \end{align*}

Using \eqref{eq:pn2div} we also compute
\begin{align*}
\texttt{III}  
&= \sum_{b=0}^{2m+1} \sum_{a=0}^{b} \sum_{c \geq 0} q^{\binom{2c}{2}-2c - b(2m-b-2c+1) -a(b-a)-2a}
\ka \cdot p^{(2m-b-2c+1)}(\kappa) 
\\
&\qquad\qquad\qquad\quad \cdot \Y^{(a)}  \LR{\kk;1-m}{c} K^{b-2m+2c-2} F^{(b-a)}
\\
&= \sum_{b=0}^{2m+2} \sum_{a=0}^{b} \sum_{c \geq 0} q^{\binom{2c}{2}-2c - b(2m-b-2c+1) -a(b-a)-2a}
[2m-b-2c+2]  
\\
&\qquad\qquad\qquad\quad \cdot 
p^{(2m-b-2c+2)}(\kappa) \Y^{(a)}  \LR{\kk;1-m}{c} K^{b-2m+2c-2} F^{(b-a)}
\\%
& - \sum_{b=0}^{2m+2} \sum_{a=0}^{b} \sum_{c \geq 0} 
q^{\binom{2c-2}{2}-2c - (b+2)(2m-b-2c+3) -a(b-a)-2a+3}
[2m-b-2c+2] 
\\
&\qquad\qquad\qquad\quad \cdot 
p^{(2m-b-2c+2)}(\kappa) \Y^{(a)}  \LR{\kk;1-m}{c-1} K^{b-2m+2c-4} F^{(b-a)}.
\end{align*}
(Note that we have shifted the index $c\to c-1$ in the last summand above.)

Collecting the formulae for $\texttt{I}, \texttt{II}_1, \texttt{II}_2$, and $\texttt{III}$, we obtain 
\[
t \cdot \dv{2m+1} = 
 \sum_{0 \le a \le b \le 2m+2}  \sum_{c\ge 0} p^{(2m-b-2c+2)}(\kappa)  
  \Y^{(a)} \mc L_{a,b,c} K^{b-2m+2c-2} F^{(b-a)},
\]
where  
\begin{align*}
\mc L_{a,b,c} := 
& \; q^{\binom{2c}{2}-2c - (b-1)(2m-b-2c+2) -(a-1)(b-a)} 
[a] \LR{\kk;1-m}{c} 
\\
&+q^{\binom{2c}{2}-2c - (b+1)(2m-b-2c+2) -a(b-a-1)-2a} 
[b-a]  \LR{\kk;-m}{c}  
\\
&+q^{\binom{2c-2}{2}-2c - (b+1)(2m-b-2c+2) -(a+1)(b-a)+2}
\frac{q^{-3a}K^{-2}-q^{-a}}{q^2-1} \LR{\kk;1-m}{c-1}  
\\
&+q^{\binom{2c}{2}-2c - b(2m-b-2c+1) -a(b-a)-2a}
[2m-b-2c+2]  \LR{\kk;1-m}{c}
\\
&- q^{\binom{2c-2}{2}-2c - (b+2)(2m-b-2c+3) -a(b-a)-2a+3}
[2m-b-2c+2]  \LR{\kk;1-m}{c-1} K^{-2}. 
\end{align*}

On the other hand, using \eqref{t2m:oddev} (with an index shift $c\to c-1$) and \eqref{t2m+2:oddev} we write 
\begin{align*}
 [2m+2] & \dv{2m+2} +[2m+1] \dv{2m} 
\\
&=\sum_{0 \le a \le b \le 2m+2} \sum_{c \geq 0} 
p^{(2m-b-2c+2)}(\kappa) \Y^{(a)} \mc R_{a,b,c} K^{b-2m+2c-2} F^{(b-a)}, 
\end{align*}
where  
\begin{align*}
\mc R_{a,b,c} 
&:= q^{\binom{2c}{2} - b(2m-b-2c+2) -a(b-a)} 
[2m+2]  \LR{\kk;-m}{c}  
\\
&\qquad + q^{\binom{2c-2}{2} - b(2m-b-2c+2) -a(b-a)} 
[2m+1]  \LR{\kk;1-m}{c-1}.    
\end{align*}

To prove the formula \eqref{t2m+2:oddev} for $\dv{2m+2}$, it suffices to show that, for all $a,b,c$,
\begin{equation}
  \label{L=R3}
\mc L_{a,b,c}= \mc R_{a,b,c}.
\end{equation}
Canceling the $q$-powers $q^{\binom{2c}{2}-2c - (b-1)(2m-b-2c+2) -(a-1)(b-a)}$ on both sides, we see that the identity \eqref{L=R3} is equivalent to the following identity, for all $a,b,c$:
\begin{align}
 \label{l=r3}
 \begin{split}
[a] \LR{\kk;1-m}{c}  
&+q^{4c-4m+b-4}  [b-a]  \LR{\kk;-m}{c}   
\\
&+q^{2a-4m+1} \frac{q^{-3a}K^{-2}-q^{-a}}{q^2-1} \LR{\kk;1-m}{c-1}   
\\
&+q^{2c-2m+b-a-2} [2m-b-2c+2]  \LR{\kk;1-m}{c}
\\
&- q^{2c-6m+b-a-2} [2m-b-2c+2]  \LR{\kk;1-m}{c-1} K^{-2}
\\
&= q^{4c-2m+a-2}  [2m+2]  \LR{\kk;-m}{c}  
+ q^{a-2m+1}  [2m+1]  \LR{\kk;1-m}{c-1}.  
   \end{split}
\end{align}

By combining the second summand of LHS with the first summand of RHS as well as combining the third summand of LHS with the second summand of RHS, the identity \eqref{l=r3} is reduced to the following equivalent identity, for all $a, b, c$:
\begin{equation}  \label{WXYZ3}
\mc W+\mc X+\mc Y+\mc Z_1+\mc Z_2=0,
\end{equation}
where  
\begin{align*}
\mc W & =[a] \LR{\kk;1-m}{c},
\\
\mc X & =q^{4c-2m+b-2}  [b-a-2m-2]  \LR{\kk;-m}{c},   
\\
\mc Y & =  \frac{q^{-a-4m+1}K^{-2}-q^{a+3}}{q^2-1} \LR{\kk;1-m}{c-1},    
\\
\mc Z_1 & =q^{2c-2m+b-a-2} [2m-b-2c+2]  \LR{\kk;1-m}{c},
\\
\mc Z_2 & = - q^{2c-6m+b-a-2} [2m-b-2c+2]  \LR{\kk;1-m}{c-1} K^{-2}.
\end{align*}

Let us finally prove the identity \eqref{WXYZ3}. Using \eqref{eq:commLR}, we can write $\mc X=\mc X_1+\mc X_2$, where 
\begin{align*}
\mc X_1 &=q^{4c-2m+b-2}  [b-a-2m-2]  \LR{\kk; 1-m}{c},
   \\
\mc X_2 &= - q^{4c-6m+b-2}  [b-a-2m-2]  \LR{\kk; 1-m}{c-1} K^{-2}.
\end{align*}
Noting that  
\[
\frac{q^{-a-4m+1}K^{-2}-q^{a+3}}{q^2-1}  
= q^{-4c-a+1}\frac{( q^{4c-4m}K^{-2}-q^2)}{q^2-1}
 +  q^{2-2c} [-2c-a],
 \] 
we rewrite $\mc Y=\mc Y_1+\mc Y_2$, where
\begin{align*}
\mc Y_1 &=q^{-2c-a} [2c] \LR{\kk; 1-m}{c},
   \qquad
\mc Y_2 = q^{2-2c} [-2c-a] \LR{\kk; 1-m}{c-1}.
\end{align*}
A direct computation shows that
\begin{align*}
\mc W+\mc X_1+\mc Y_1+\mc Z_1 &=
- (q-q^{-1}) [2c+a] [2c]  \LR{\kk; 1-m}{c},
\\
(\mc X_2+\mc Z_2) +\mc Y_2 
&= q^{2c-4m} [2c+a] \LR{\kk; 1-m}{c-1} K^{-2} +\mc Y_2
\\
&= (q-q^{-1}) [2c+a] [2c] \LR{\kk; 1-m}{c}.
\end{align*}
Summing up these two equations, we obtain $\mc W+\mc X+\mc Y+\mc Z_1+\mc Z_2=0$, whence \eqref{WXYZ3}, completing Step ~(2).

The proof of Theorem~\ref{thm:dv:evenKappa} is completed. \qed

\section{The $\imath$-divided powers  $\dvp{n}$ for even weights and odd $\ka$}
  \label{sec:evoddK}

In this section~\ref{sec:evoddK} we shall always take $\ka$ to be an odd $q$-integer, i.e., 
\begin{align}  
   \label{eq:oddK}
\ka & =[2\ell-1], \quad  \text{ for } \ell  \in \Z.
\end{align}
\subsection{Definition of $\dvp{n}$ for odd $\ka$}

We shall take the following as a definition of the $\imath$-divided powers  $\dvp{n}$ with odd $\ka$, which is formally identical to the formulae for $\dv{n}$ with even $\ka$. 

\begin{definition}
Set $\dvp{1} = t =  F +\Y +\ka K^{-1}$. The divided powers $\dvp{n}$, for $n\ge 1$, are defined by the recursive relations: 
\begin{align}   
  \label{eq:tt:oddevKa}
\begin{split}
t \cdot \dvp{2a-1} &=[2a] \dvp{2a}+   [2a-1] \dvp{2a-2},
\\
t \cdot \dvp{2a} &=  [2a+1] \dvp{2a+1} , \quad \text{ for } a\ge 1.
\end{split}
\end{align}   
\end{definition}
Equivalently, we have the following closed formula for $\dvp{n}$, for $a\in\N$:
\begin{align*}
\dvp{n} = 
\begin{cases}
\frac{1}{[2a]!}  (t  - [-2a+1] ) (t  - [-2a+3]) \cdots (t  - [2a-3]) (t -[2a-1]), & \text{if } n=2a, \\
\\
\frac{t}{[2a+1]!}  (t  - [-2a+1] ) (t  - [-2a+3]) \cdots (t  - [2a-3]) (t -[2a-1]), &\text{if } n=2a+1.
\end{cases}
\end{align*}
Note the above formulae are formally the same as \eqref{def:dv:evoddK} for $\dv{n} \in {}_\A\Ui_{\rm odd}$ for  $\ka$ an even $q$-integer.

\subsection{Polynomials $\p_n(x)$  and $\p^{(n)}(x)$}
  \label{subsec:pn2}

We define a sequence of polynomials $\p_n(x)$ in a variable $x$, for $n\in \N$,  by letting
\begin{align}
  \label{eq:pn2}
\p_{n+1} =x \p_n  + q^{2-2n} [n] [n-2] \p_{n-1}, \qquad \p_0=1.
\end{align}
Note $\p_n$ is a monic polynomial in $x$ of degree $n$. Also set $\p_n=0$ for $n<0$. These polynomials $\p_n$ will appear in the expansion formula for the $\imath$-divided powers in $\U$.

\begin{example}
Here are the polynomials $\p_n(x)$, for $1\le n \le 5$:
\begin{align*}
& \p_1(x)  =x,
\qquad
\p_2(x) =x^2-1,
\qquad
\p_3(x) =x^3-x,
\\
\p_4(x) &=x^4 +(q^{-4}[3]-1)x^2 -q^{-4}[3],  
\quad
\p_5(x) = x (x^2 - 1) (x^2+q^{-5}[3]! + q^{-6}[5]).
\end{align*}
\end{example}
A simple induction using the recursive formula \eqref{eq:pn2} shows that $\p_n/(x-1) \in \N[q^{-1}][x]$, for $n\ge 2$. In particular,  for $\ka$ any positive odd $q$-integer, we always have $\p_n (\ka) \in \N[q,q^{-1}]$. 

Introduce the monic polynomials $\mathfrak g_n(x)$ of degree $n$:
\begin{align}   \label{eq:gn-ood}
\mathfrak g_n(x) =
\begin{cases}
\prod_{i=1}^{m} (x^2-[2i-1]^2), & \text{ if } n=2m \text{ is even};
\\
x \prod_{i=1}^{m} (x^2-[2i-1]^2), & \text{ if } n=2m+1 \text{ is odd}.
\end{cases}
\end{align}

Define 
\[
\p^{(n)}(x) =\p_n(x)/[n]!, 
\qquad
\mathfrak g^{(n)}(x) =\mathfrak g_n(x)/[n]!. 
\]
Then Equation \eqref{eq:pn2} implies that
\begin{align}
  \label{eq:pn2div}
x \cdot \p^{(n)}  =[n+1] \p^{(n+1)}-q^{2-2n} [n-2] \p^{(n-1)} \quad (n\ge 1).
\end{align}

Our next goal is to prove that $\p^{(n)}(\ka)$ are integral, i.e., $\p^{(n)}([2\ell-1]) \in \A$, for all $n, \ell \in \Z$; see Proposition~\ref{prop:fg:odd}. This is achieved by relating $\p^{(n)}$ to $\mathfrak g^{(n)}$ for varied $n$. 

\begin{lem} \label{lem:odd:g-integral}
For $\ka =[2\ell-1]$ as in \eqref{eq:oddK}, we have $\mathfrak g^{(n)} (\ka) \in \A$, for all $n\in \N$. 
\end{lem}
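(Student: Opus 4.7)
The plan is to mimic the proof of Lemma~\ref{lem:ev:g-integral} closely, separating the cases $n = 2m$ and $n = 2m+1$. The key ingredient is the pair of identities
\[
[2\ell-1] - [2i-1] = (q^{\ell+i-1} + q^{-\ell-i+1})\,[\ell-i],
\qquad
[2\ell-1] + [2i-1] = [\ell+i-1]\,(q^{\ell-i} + q^{-\ell+i}),
\]
both consequences of the general telescoping formula $[A+B] \pm [A-B]$ applied with $A = \ell+i-1$, $B = \ell-i$. These factorizations exhibit each quadratic factor $\ka^2 - [2i-1]^2 = (\ka-[2i-1])(\ka+[2i-1])$ as a product of two $q$-integers times an element of $\A$.

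For the even case $n = 2m$, I would rewrite the product as $\prod_{j=1-m}^{m}(\ka - [2j-1])$ and collect the resulting $[\ell-j]$'s into $[\ell+m-1]!/[\ell-m-1]!$, obtaining
\[
\mathfrak g^{(2m)}(\ka) = \qbinom{\ell+m-1}{2m}\prod_{j=1-m}^{m}\bigl(q^{\ell+j-1}+q^{-\ell-j+1}\bigr),
\]
which is manifestly in $\A$.

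For the odd case $n = 2m+1$, the obstacle is the extra factor of $\ka$, which has no obvious way of being absorbed into the $q$-factorial $[2m+1]!$. I would use the same index-shift trick as in Lemma~\ref{lem:ev:g-integral}: write $\ka = (\ka - [2m+1]) + [2m+1]$, and invoke $[2m+1]/[2m+1]! = 1/[2m]!$ to split
\[
\mathfrak g^{(2m+1)}(\ka) = \frac{1}{[2m+1]!}\prod_{j=1-m}^{m+1}(\ka - [2j-1]) \;+\; \mathfrak g^{(2m)}(\ka).
\]
The second summand lies in $\A$ by the even case; for the first, applying $\ka - [2j-1] = (q^{\ell+j-1}+q^{-\ell-j+1})[\ell-j]$ to each factor collapses the product of $q$-integers into $\qbinom{\ell+m-1}{2m+1}$, times a polynomial in $q^{\pm 1}$.

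The main technical point is seeing in the odd case that augmenting the product by exactly one additional factor $(\ka - [2m+1])$ produces $2m+1$ consecutive factors $\ka - [2j-1]$, just enough to form a clean $q$-binomial coefficient; the rest is routine bookkeeping analogous to Lemma~\ref{lem:ev:g-integral}.
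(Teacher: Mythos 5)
Your proof is correct and follows essentially the same route as the paper's: both use the factorization $\ka-[2j-1]=[\ell-j](q^{\ell+j-1}+q^{-\ell-j+1})$ to collapse the $2m$ (resp.\ $2m+1$) consecutive factors into a $q$-binomial coefficient, and both handle the stray factor of $\ka$ in the odd case by absorbing one extra term $\ka\mp[2m+1]$ into the product and peeling off $\mathfrak g^{(2m)}(\ka)$ (the paper extends the index range downward to $i=-m$ where you extend it upward to $j=m+1$, an immaterial symmetry). Incidentally, your exponent $\ell+j-1$ is the correct one; the paper's displayed $q^{\ell+i-2}+q^{-\ell-i+2}$ appears to be a typo, though it does not affect the integrality conclusion.
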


\begin{proof}
We separate the cases for $n=2m+1$ and $n= 2m$. 
Noting that
\begin{equation}  \label{eq:A2}
\ka -[2i-1] =[2\ell-1]-[2i-1] = [\ell-i] (q^{\ell+i-2} +q^{-\ell-i+2}),
\end{equation} 
we have
 \begin{align*}
 \g^{(2m)}(\ka) &= \frac1{[2m]!} \prod_{i=1-m}^{m} (\ka-[2i-1])
 =  
 \qbinom{\ell+m-1}{2m} \prod_{i=1-m}^{m}(q^{\ell+i-2} +q^{-\ell-i+2} ) \in \A.
 \end{align*}
 
Similarly, using \eqref{eq:A2} we have
 \begin{align*}
 \g^{(2m+1)}(\ka) &= \frac1{[2m+1]!} \ka \prod_{i=1-m}^{m} (\ka-[2i-1])
 \\
 &= \frac1{[2m+1]!} \prod_{i=-m}^{m} (\ka-[2i-1]) - \frac1{[2m]!}  \prod_{i=1-m}^{m} (\ka-[2i-1])
 \\
 &=\qbinom{\ell+m}{2m+1} \prod_{i=-m}^{m}(q^{\ell+i-2} +q^{-\ell-i+2} ) 
  + \qbinom{\ell+m-1}{2m} \prod_{i=1-m}^{m}(q^{\ell+i-2} +q^{-\ell-i+2} ) \in \A.
  \end{align*}
  The lemma is proved. 
\end{proof}

\begin{prop}
 \label{prop:fg:odd}
For $m \in \N$, we have
\begin{align*}
\p^{(2m)} &= \sum_{a=0}^{m-1} q^{(3-2m)a} \qbinom{m-1}{a}_{q^2} \mathfrak g^{(2m-2a)},
\\
\p^{(2m+1)} &= \sum_{a=0}^{m-1} q^{(1-2m)a} \qbinom{m-1}{a}_{q^2} \mathfrak g^{(2m-2a+1)}.
\end{align*}
In particular, we have $\p^{(n)} (\ka) \in \A$, for all $n\in \N$ and all $\ka =[2\ell-1]$ with $\ell \in \Z$. 
\end{prop}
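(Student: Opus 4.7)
The plan is to follow the template of the proof of Proposition~\ref{prop:fg:ev}. Once the two expansion formulas of $\p^{(n)}$ in terms of the $\mathfrak g^{(n)}$ are established, the integrality statement $\p^{(n)}(\ka) \in \A$ follows immediately, since Lemma~\ref{lem:odd:g-integral} gives $\mathfrak g^{(k)}(\ka) \in \A$ and the remaining coefficients ($q^2$-binomial numbers and powers of $q$) also lie in $\A$.

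I would prove the two expansion formulas by induction on $m \ge 1$, with base case $m=1$ reading $\p^{(2)} = \mathfrak g^{(2)}$ and $\p^{(3)} = \mathfrak g^{(3)}$, both of which are immediate from $\p_2 = x^2-1 = \mathfrak g_2$ and $\p_3 = x^3-x = x\mathfrak g_2 = \mathfrak g_3$. The inductive step rests on two recursions: the first is \eqref{eq:pn2div} for $\p^{(n)}$, rewritten as $[n+1]\p^{(n+1)} = x \cdot \p^{(n)} + q^{2-2n}[n-2]\p^{(n-1)}$; the second is a pair of recursions for $\mathfrak g^{(n)}$ derived directly from the product definition \eqref{eq:gn-ood}, namely
\[
x \cdot \mathfrak g^{(2a)} = [2a+1]\, \mathfrak g^{(2a+1)}, \qquad x \cdot \mathfrak g^{(2a-1)} = [2a]\, \mathfrak g^{(2a)} + [2a-1]\, \mathfrak g^{(2a-2)}.
\]
These are formally identical to the defining relations \eqref{eq:tt:oddevKa} for $\dvp{n}$, which is consistent with the tautological identification $\dvp{n} = \mathfrak g^{(n)}(t)$.

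The induction then splits into two parts, parallel to the proof of Proposition~\ref{prop:fg:ev}: in part (a), assuming the formulas for $\p^{(2m-1)}$ and $\p^{(2m)}$, I would derive the formula for $\p^{(2m+1)}$; in part (b), assuming the formulas for $\p^{(2m)}$ and $\p^{(2m+1)}$, I would derive the formula for $\p^{(2m+2)}$. In each part one substitutes the induction hypothesis into the right-hand side of the $\p$-recursion, applies the $\mathfrak g$-recursions to rewrite each $x \cdot \mathfrak g^{(k)}$, and then shifts summation indices so that the like terms in $\mathfrak g^{(j)}$ can be collected and compared term-by-term with the claimed right-hand side.

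The main obstacle will be verifying that the coefficient of each $\mathfrak g^{(j)}$ matches on the two sides. This reduces, in each of parts (a) and (b), to a Pascal-type identity combining $\qbinom{m-1}{a}_{q^2}$ and $\qbinom{m-2}{a-1}_{q^2}$ (weighted by appropriate $q$-integer factors) into $\qbinom{m-1}{a}_{q^2}$; these identities are analogous in structure to the two appearing at the end of the proof of Proposition~\ref{prop:fg:ev}, but carry different $q$-exponents reflecting the factor $q^{2-2n}[n-2]$ in \eqref{eq:pn2div} versus the factor $q^{1-2n}[n-1]$ in \eqref{recursive:f}. Each such identity can be verified using the standard quantum Pascal rule together with elementary manipulations of $q$-integers, completing the induction.
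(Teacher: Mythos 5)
Your proposal matches the paper's own proof essentially step for step: the paper also proves the two expansions by induction on $m$ in two parts (odd index from $\p^{(2m)},\p^{(2m-1)}$; even index from $\p^{(2m+1)},\p^{(2m)}$), using the recursion \eqref{eq:pn2div} together with exactly the $\mathfrak g$-recursions you state (which are \eqref{recursive:g2}), then shifting indices and collecting like terms via Pascal-type $q^2$-binomial identities, with the integrality conclusion drawn from Lemma~\ref{lem:odd:g-integral}. No gaps; this is the intended argument.
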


\begin{proof}
 It follows from  these formulae for $\p^{(n)}$ and Lemma~\ref{lem:odd:g-integral} that $\p^{(n)} ([2\ell-1]) \in \A$.

Let us prove these formulae. Note that 
\begin{align}  \label{recursive:g2}
\begin{split}
x \cdot \g^{(2a-1)} &=[2a] \g^{(2a)} +   [2a-1] \g^{(2a-2)},
\\
x \cdot \g^{(2a)} &=  [2a+1] \g^{(2a+1)}, \quad \text{ for } a\ge 1.
\end{split}
\end{align}

We prove by induction on $m$, with the base cases for $\p^{(1)}$ and $\p^{(2)}$ being clear. We separate in two cases. 

(1) Let us prove the formula for $\p^{(2m+1)}$, assuming the formulae for $\p^{(2m-1)}$ and $\p^{(2m)}$.
By \eqref{eq:pn2div} and \eqref{recursive:g2} we have 
\begin{align*} 
[2m+1] \p^{(2m+1)} 
&= x \p^{(2m)} +q^{2-4m} [2m-2] \p^{(2m-1)}  \\
&= \sum_{a=0}^{m-1} q^{(3-2m)a} \qbinom{m-1}{a}_{q^2} x \cdot \g^{(2m-2a)}
\\&\qquad 
  +\sum_{a=0}^{m-2} q^{(3-2m)a+2-4m}  [2m-2] \qbinom{m-2}{a}_{q^2} \mathfrak g^{(2m-2a-1)}
  \\
&= \sum_{a=0}^{m-1} q^{(3-2m)a} [2m-2a+1] \qbinom{m-1}{a}_{q^2}   \g^{(2m-2a+1)}
\\&\qquad 
  +\sum_{a=0}^{m-2} q^{(3-2m)a+2-4m}  [2m-2] \qbinom{m-2}{a}_{q^2} \mathfrak g^{(2m-2a-1)}
  \\
&= \sum_{a=0}^{m-1} q^{(3-2m)a} [2m-2a+1] \qbinom{m-1}{a}_{q^2}   \g^{(2m-2a+1)}
\\&\qquad 
  +\sum_{a=1}^{m-1} q^{(3-2m)(a-1)+2-4m}  [2m-2] \qbinom{m-2}{a-1}_{q^2} \mathfrak g^{(2m-2a+1)}
  \\
  &=[2m+1] \sum_{a=0}^{m-1} q^{(1-2m)a} \qbinom{m-1}{a}_{q^2} \mathfrak g^{(2m-2a+1)},
\end{align*}
where the last equation results from shifting the second summation by $a\to a-1$ and using
the identity 
\[
q^{2a} [2m-2a+1] \qbinom{m-1}{a}_{q^2} +q^{2a-2m-1}  [2m-2] \qbinom{m-2}{a-1}_{q^2}
=[2m+1] \qbinom{m-1}{a}_{q^2}. 
\]
\vspace{3mm} 

(2) We shall prove the formula for $\p^{(2m+2)}$, assuming the formulae for $\p^{(2m+1)}$ and $\p^{(2m)}$.
By \eqref{eq:pn2div} and \eqref{recursive:g2} we have  
\begin{align*} 
&[2m+2] \p^{(2m+2)} 
\\
&= x \p^{(2m+1)} +q^{-4m} [2m-1] \p^{(2m)} 
 \\
&= \sum_{a=0}^{m-1} q^{(1-2m)a} \qbinom{m-1}{a}_{q^2} x\cdot \g^{(2m-2a+1)}
+ \sum_{a=0}^{m-1} q^{(3-2m)a-4m} [2m-1] \qbinom{m-1}{a}_{q^2} \g^{(2m-2a)}
\\%
&= \sum_{a=0}^{m-1} q^{(1-2m)a} \qbinom{m-1}{a}_{q^2} 
\left( [2m-2a+2] \g^{(2m-2a+2)} +[2m-2a+1]  \g^{(2m-2a)} \right)
\\& \qquad 
+ \sum_{a=0}^{m-1} q^{(3-2m)a-4m} [2m-1] \qbinom{m-1}{a}_{q^2} \g^{(2m-2a)}. 
\end{align*}
By combining the like terms for $\g^{(2m-2a)}$ and using the identity
\[
q^{(1-2m)a} [2m-2a+1] + q^{(3-2m)a-4m} [2m-1]
=q^{(1-2m)(a+1)} [4m-2a],
\]
we have
\begin{align*} 
[2m+2] \p^{(2m+2)} 
&= \sum_{a=0}^{m-1} q^{(1-2m)a}  [2m-2a+2] \qbinom{m-1}{a}_{q^2} \g^{(2m-2a+2)}  
\\& \qquad 
+ \sum_{a=0}^{m-1} q^{(1-2m)(a+1)} [4m-2a] \qbinom{m-1}{a}_{q^2} \g^{(2m-2a)}
\\%
&= [2m+2]  \sum_{a=0}^{m} q^{(1-2m)a} \qbinom{m}{a}_{q^2} \mathfrak g^{(2m-2a+2)},
\end{align*}
where the last equation results from shifting the second summation index from $a\to a-1$ and using the identity
\[
[2m-2a+2] \qbinom{m-1}{a}_{q^2} + [4m-2a+2] \qbinom{m-1}{a-1}_{q^2} 
= [2m+2] \qbinom{m}{a}_{q^2}.
\]
The proposition is proved.
\end{proof}

\subsection{Formulae for $\dvp{n}$ with odd $\ka$}

\begin{thm}   
  \label{thm:dvp:oddKappa}
Let $\ka$ be an odd $q$-integer. Then, for $m\ge 0$, we have
\begin{align}
\dvp{2m} &= 
\sum_{b=0}^{2m} \sum_{a=0}^{b} \sum_{c\geq 0} q^{\binom{2c}{2}+2c - b(2m-b-2c) -a(b-a)} 
\p^{(2m-b-2c)}(\kappa) 
\label{t2m:evodd} \\
&\qquad \qquad \qquad\quad \cdot \Y^{(a)}  \qbinom{\kk;1-m}{c} K^{b-2m+2c} F^{(b-a)}, 
\notag
\\
\dvp{2m+1} &= 
\sum_{b=0}^{2m+1} \sum_{a=0}^{b} \sum_{c\geq 0} q^{\binom{2c}{2} -b(2m-b-2c+1) -a(b-a)}
 \p^{(2m-b-2c+1)}(\kappa)
\label{t2m+1:evodd}
\\
&\qquad\qquad\qquad\quad \cdot \Y^{(a)}  \qbinom{\kk;1-m}{c} K^{b-2m+2c-1} F^{(b-a)}. 
\notag
\end{align}
\end{thm}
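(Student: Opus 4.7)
The plan is to mimic closely the inductive scheme used for Theorems~\ref{thm:dvev:evKappa} and~\ref{thm:dv:evenKappa}, since the defining recursions \eqref{eq:tt:oddevKa} for $\dvp{n}$ are formally identical to those for $\dv{n}$ (odd weights, even $\ka$), while the combinatorial data differ in two essential ways: the weight is even so the brackets $\qbinom{\kk;a}{c}$ from \eqref{kbinom} appear (not $\LR{\kk;a}{c}$), and the polynomial in $\ka$ is $\p^{(n)}$ (not $p^{(n)}$), satisfying the modified recursion \eqref{eq:pn2div}. We induct on $n$ with obvious base cases $n=0,1,2$, and split the induction into two steps as in \S\ref{subsec:proof:dv:evKappa}.

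In Step~(1), assuming \eqref{t2m:evodd} for $\dvp{2m}$, we establish \eqref{t2m+1:evodd} for $\dvp{2m+1}$ by expanding $[2m+1]\dvp{2m+1}=t\cdot\dvp{2m}$ as $I+II+III$, where $I=\Y\cdot\dvp{2m}$, $II=F\cdot\dvp{2m}$, and $III=\ka K^{-1}\cdot\dvp{2m}$. The term $I$ is handled by shifting $a\mapsto a-1$, $b\mapsto b-1$. The term $II$ is split as $II=II^1+II^2$ via \eqref{FYn}, with the second piece produced by moving $F$ past $\Y^{(a)}$. The term $III$ is the new ingredient: here one uses \eqref{eq:pn2div} to rewrite $\ka\cdot\p^{(2m-b-2c)}(\ka)$ as a combination of $[2m-b-2c+1]\p^{(2m-b-2c+1)}(\ka)$ and $[2m-b-2c-2]\p^{(2m-b-2c-1)}(\ka)$; the second of these must then be reindexed $c\mapsto c-1$ using \eqref{kbinom2} so that everything is grouped by the same power $K^{b-2m+2c-1}$ and bracket $\qbinom{\kk;1-m}{c}$. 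Matching the coefficient of $\p^{(2m-b-2c+1)}(\ka)\,\Y^{(a)} (\cdots) K^{b-2m+2c-1} F^{(b-a)}$ against the right-hand side of \eqref{t2m+1:evodd} reduces the whole statement to a single scalar identity of the shape $A+B+C+D_1+D_2=0$, exactly analogous to \eqref{ABCD} and \eqref{ABCD3}. This identity is proved by splitting $B$ and $C$ using \eqref{kbinom2} and the algebraic identity
\[
\frac{q^{-3a}K^{-2}-q^{-a}}{q^2-1}=q^{4m-4c-3a}\cdot\frac{q^{4c-4m}K^{-2}-1}{q^2-1}+q^{2m-2c-2a-1}[2m-2c-a],
\]
so that the pieces proportional to $\qbinom{\kk;1-m}{c}$ and to $\qbinom{\kk;1-m}{c-1}K^{-2}$ cancel separately, as they did in \S\ref{subsec:proof:dvev:evKappa}.

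In Step~(2), assuming \eqref{t2m:evodd} and \eqref{t2m+1:evodd}, we prove \eqref{t2m:evodd} for $m$ replaced by $m+1$ from $t\cdot\dvp{2m+1}=[2m+2]\dvp{2m+2}+[2m+1]\dvp{2m}$. The calculation parallels Step~(1): we compute $\texttt{I}$, $\texttt{II}=\texttt{II}_1+\texttt{II}_2$, $\texttt{III}$, reindex summations so that every term carries the common factor $\p^{(2m-b-2c+2)}(\ka)\,\Y^{(a)}(\cdots)K^{b-2m+2c-2}F^{(b-a)}$, and read off the scalar identity $\mc L_{a,b,c}=\mc R_{a,b,c}$. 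Again using \eqref{kbinom2} and splitting the $\Y\to F$ commutator, this reduces to an identity $\mc W+\mc X+\mc Y+\mc Z_1+\mc Z_2=0$ whose two natural halves each simplify to $\pm(q-q^{-1})[2c+a][2c]\qbinom{\kk;1-m}{c}$.

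The main obstacle, as in the earlier sections, is not any single conceptual difficulty but rather the bookkeeping of index shifts and the careful splitting of the brackets; the presence of the new polynomial $\p^{(n)}$ (with the recursion \eqref{eq:pn2div} differing from \eqref{recursive:f} by the shift $[n-1]\mapsto [n-2]$ and a different $q$-power) changes the explicit $q$-powers that appear in $III$ and $\texttt{III}$, but does not affect the final $q$-power identities, because those identities are driven entirely by \eqref{kbinom2} and the commutator identity above, which are independent of the $\p$-vs-$p$ distinction. Once the scalar identities are verified, the PBW basis theorem for $\U$ applied to $\Y^{(a)}\qbinom{\kk;1-m}{c}K^{b-2m+2c-1}F^{(b-a)}$ gives \eqref{t2m+1:evodd} from Step~(1) and \eqref{t2m:evodd} (shifted) from Step~(2), completing the induction.
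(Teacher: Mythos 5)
Your proposal follows essentially the same route as the paper's proof in \S\ref{subsec:proof:dvp:oddKappa}: the same two-step induction on the recursions \eqref{eq:tt:oddevKa}, the same decomposition of $t\cdot\dvp{n}$ into $I+II+III$ handled via \eqref{FYn} and \eqref{eq:pn2div}, the same index shifts, and the same reduction by the PBW basis theorem to a five-term scalar identity proved by splitting into two cancelling halves using \eqref{kbinom2} and the partial splitting of the commutator fraction. One small correction: the passage from $p^{(n)}$ to $\p^{(n)}$ \emph{does} alter the final scalar identities, not just the intermediate $q$-powers --- in the paper the two halves of \eqref{ABCD4} and \eqref{WXYZ4} come out as $\pm(1-q^{-2})[2c][2m-2c-a]\qbinom{\kk;1-m}{c}$ and $\pm(1-q^{-2})[2c+a+1][2c]\qbinom{\kk;1-m}{c}$ rather than your $\pm(q-q^{-1})[2c+a][2c]\qbinom{\kk;1-m}{c}$ --- but the method of verifying them is unchanged, so this does not affect the validity of the argument.
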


The proof of Theorem~\ref{thm:dvp:oddKappa} will be given in \S\ref{subsec:proof:dvp:oddKappa} below. 
By applying the anti-involution $\vs$ we convert the formulae in Theorem~\ref{thm:dvp:oddKappa} as follows.

\begin{thm}  
  \label{thm:dvp:oddKappa2}
Let $\ka$ be an odd $q$-integer. Then we have, for $m\ge 1$,  
\begin{align}
\dvp{2m} &= 
\sum_{b=0}^{2m} \sum_{a=0}^{b} \sum_{c\geq 0} 
(-1)^c q^{c +b(2m-b-2c) +a(b-a)}  \p^{(2m-b-2c)}(\kappa) 
 \label{t2m:oddevKa}  \\
&\qquad \qquad \qquad\quad \cdot  F^{(b-a)} K^{b-2m+2c} \qbinom{\kk;m-c}{c}  \Y^{(a)}
\notag
\\
\dvp{2m+1}   &= 
\sum_{b=0}^{2m+1} \sum_{a=0}^{b} \sum_{c\geq 0} 
(-1)^c q^{3c +b(2m-b-2c+1) +a(b-a)} \p^{(2m-b-2c+1)}(\kappa)
 \label{t2m+1:oddevKa} \\
&\qquad\qquad\qquad\quad \cdot F^{(b-a)} K^{b-2m+2c-1} \qbinom{\kk;m-c}{c}  \Y^{(a)}.
\notag
\end{align}
\end{thm}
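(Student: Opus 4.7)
The plan is to deduce Theorem~\ref{thm:dvp:oddKappa2} from Theorem~\ref{thm:dvp:oddKappa} by applying the anti-involution $\vs$ of $\U$, in complete analogy with the passage from Theorem~\ref{thm:dvev:evKappa} to Theorem~\ref{thm:dvev:evKappa2}.

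First I would verify that all data on both sides of the formulae of Theorem~\ref{thm:dvp:oddKappa} transform under $\vs$ in a controlled way. The action of $\vs$ on the building blocks $F,\Y,K^{-1},q$ and on the factor $\qbinom{\kk;a}{n}$ described in Lemma~\ref{lem:anti} is independent of the value of $\ka$; in particular, specializing $a=1-m$,
\[
\vs\colon\qbinom{\kk;1-m}{c}\longmapsto (-1)^c q^{2c(c+1)}\qbinom{\kk;m-c}{c}.
\]
Moreover, for $\ka=[2\ell-1]$ we have $\overline{\ka}=\ka$, whence $\vs(t)=t$. Since $\dvp{n}$ is a polynomial in $t$ with $q$-integer coefficients (visible in the closed formula recorded right after the defining recursion), this gives $\vs(\dvp{n})=\dvp{n}$ for every $n$, and similarly $\vs$ fixes $\p^{(n)}(\ka)$ in the same sense that $\vs$ fixes $p^{(n)}(\ka)$ in the proof of Theorem~\ref{thm:dvev:evKappa2}.

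Next I would apply $\vs$ term-by-term to the right-hand side of \eqref{t2m:evodd}. Because $\vs$ is an anti-involution, the monomial $\Y^{(a)}\qbinom{\kk;1-m}{c}K^{b-2m+2c}F^{(b-a)}$ reverses into $F^{(b-a)}K^{b-2m+2c}\qbinom{\kk;m-c}{c}\Y^{(a)}$, producing the extra factor $(-1)^c q^{2c(c+1)}$, while the scalar prefactor has its $q$-exponent negated. The two contributions combine to
\[
-\binom{2c}{2}-2c+2c(c+1)+b(2m-b-2c)+a(b-a)=c+b(2m-b-2c)+a(b-a),
\]
which is precisely the exponent appearing in \eqref{t2m:oddevKa}. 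The parallel bookkeeping applied to \eqref{t2m+1:evodd} produces the exponent $3c+b(2m-b-2c+1)+a(b-a)$, matching \eqref{t2m+1:oddevKa}.

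The only item requiring care is the tallying of signs and $q$-powers after combining $(-1)^c q^{2c(c+1)}$ with the inverted prefactor; this is the same short arithmetic that was carried out successfully in the proofs of Theorems~\ref{thm:dvev:evKappa2} and \ref{thm:dv:evenKappa2}. No genuinely new obstruction arises, so I would expect this conversion to be the most mechanical of the four analogous passages in the paper.
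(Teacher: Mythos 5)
Your overall strategy --- apply the anti-involution $\vs$ to the formulae of Theorem~\ref{thm:dvp:oddKappa} via Lemma~\ref{lem:anti} --- is exactly the route the paper takes, and your exponent bookkeeping is correct: $-\bigl(\tbinom{2c}{2}+2c\bigr)+2c(c+1)=c$ and $-\tbinom{2c}{2}+2c(c+1)=3c$, which reproduce the exponents in \eqref{t2m:oddevKa} and \eqref{t2m+1:oddevKa}. Your observation that $\vs(t)=t$, hence $\vs(\dvp{n})=\dvp{n}$ via the closed formula in $t$ (whose coefficients are bar-invariant), is also fine and is in fact slightly more careful than the paper's one-line justification.

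The one step that does not hold up as stated is your assertion that $\vs$ fixes $\p^{(n)}(\ka)$ --- a point the paper's own terse proof also passes over in silence (it lists only $\ka, K, \Y^{(a)}, F^{(a)}$ and the divided powers as fixed). Since $\vs$ acts on scalars by $q\mapsto q^{-1}$, one has $\vs\bigl(\p^{(n)}(\ka)\bigr)=\overline{\p^{(n)}}(\ka)$, the polynomial with bar-conjugated coefficients evaluated at the bar-invariant $\ka$; and the coefficients of $\p_n$ are \emph{not} bar-invariant for $n\ge 4$. For instance $\p_4=x^4+(q^{-4}[3]-1)x^2-q^{-4}[3]$ gives $\p_4(\ka)-\overline{\p_4}(\ka)=(q^{-4}-q^{4})[3](\ka^2-1)$, which is nonzero for $\ka=[2\ell-1]$ with $\ell\neq 0,1$. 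So applying $\vs$ to \eqref{t2m:evodd}--\eqref{t2m+1:evodd} literally produces the right-hand sides of \eqref{t2m:oddevKa}--\eqref{t2m+1:oddevKa} with $\overline{\p^{(2m-b-2c)}}(\ka)$ in place of $\p^{(2m-b-2c)}(\ka)$, and these two triple sums are not term-by-term equal; an additional argument (or a correction of the coefficients in the statement) is needed to bridge the gap, beginning with $m=2$. The same caveat applies to your appeal to the proofs of Theorems~\ref{thm:dvev:evKappa2} and \ref{thm:dv:evenKappa2} as precedents, since already $p_3=x^3+(q^{-2}+q^{-4})x$ fails to be bar-invariant there.
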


\begin{proof}
By Lemma~\ref{lem:anti}, the anti-involution $\vs$ sends 
$\qbinom{\kk;1-m}{c}\mapsto (-1)^c q^{2c(c+1)} \qbinom{\kk;m-c}{c}$, 
$q\mapsto q^{-1}$, while fixing $\ka, K, \Y^{(a)}, F^{(a)}$ and $\dvev{n}$. The formulae \eqref{t2m:oddevKa}--\eqref{t2m+1:oddevKa}  now follows from \eqref{t2m:evodd}--\eqref{t2m+1:evodd} in Theorem~\ref{thm:dvp:oddKappa}. 
%
%
\end{proof}

The following corollary is immediate from \eqref{eq:qbinom-hc}, Proposition~\ref{prop:fg:odd}, and Theorem~\ref{thm:dvp:oddKappa}. 
\begin{cor}  
We have $\dvp{n} \in {}_\A \Ui_{\rm{ev}}$, for all $n$. 
\end{cor}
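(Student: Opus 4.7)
The plan is to combine the three ingredients cited. By \cite{BW16},
\[
{}_\A \Ui_{\rm ev} = \{x \in \Ui \mid x u \in \UAdot_{\rm ev}\text{ for all }u \in \UAdot_{\rm ev}\},
\]
and since $\UAdot_{\rm ev}$ is an $\A$-subalgebra of $\Udot$ generated by $\one_{2\la}$, $E^{(k)} \one_{2\la}$, and $F^{(k)} \one_{2\la}$ for $\la \in \Z$, $k \ge 0$, it suffices to show that $\dvp{n} \cdot \one_{2\la}$ lies in $\UAdot_{\rm ev}$ for every $\la \in \Z$; right multiplication by any element of $\UAdot_{\rm ev}$ then preserves $\UAdot_{\rm ev}$.

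The main step is to apply Theorem~\ref{thm:dvp:oddKappa2} to $\one_{2\la}$ and inspect each summand, which has the shape
\[
(\text{power of }q) \cdot \p^{(n-b-2c)}(\ka) \cdot F^{(b-a)} K^{j} \qbinom{\kk;m-c}{c} \Y^{(a)} \one_{2\la}
\]
for an integer $j$ (depending on $b,c,m$) read off from the theorem. I would handle the nontrivial factors in order. First, $\p^{(n-b-2c)}(\ka) \in \A$ by Proposition~\ref{prop:fg:odd}, using that $\ka$ is an odd $q$-integer. Second, $\Y^{(a)} = q^{-a^2} E^{(a)} K^{-a}$, so $\Y^{(a)} \one_{2\la}$ equals a unit $q$-power times $E^{(a)} \one_{2\la} = \one_{2\la+2a} E^{(a)}$, after which we sit on the even idempotent $\one_{2\la+2a}$. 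Third, $\qbinom{\kk;m-c}{c}$ evaluated on $\one_{2\la+2a}$ gives the $\A$-scalar $(-1)^c q^{-2c(c+1)} \cbinom{m-\la-a-c}{c}$ by \eqref{eq:qbinom-hc} with $\la$ shifted to $\la+a$. Finally, $K^{j}$ contributes a unit $q$-power on the even weight $2\la+2a$, and $F^{(b-a)}$ is a Lusztig divided power preserving $\UAdot$. Assembling these observations exhibits $\dvp{n} \one_{2\la}$ as an $\A$-linear combination of monomials $F^{(b-a)} E^{(a)} \one_{2\la+2a}$ in $\UAdot_{\rm ev}$.

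I do not anticipate a genuine obstacle: each of the three cited inputs cleanly addresses one potential source of non-integrality in the expansion, and the verification is essentially bookkeeping. The only small point of care is that \eqref{eq:qbinom-hc} must be invoked after $\Y^{(a)}$ has shifted the weight to $2\la+2a$; since $2\la+2a$ remains even, the $q^4$-Gaussian binomial $\cbinom{m-\la-a-c}{c}$ still belongs to $\A$, and the argument proceeds uniformly in $\la$.
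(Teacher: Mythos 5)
Your proposal is correct and follows essentially the same route as the paper, which deduces the corollary from the expansion formula of Theorem~\ref{thm:dvp:oddKappa}, the integrality $\p^{(n)}(\ka)\in\A$ of Proposition~\ref{prop:fg:odd}, and the evaluation \eqref{eq:qbinom-hc} of $\qbinom{\kk;m-c}{c}$ on even-weight idempotents. You have merely written out the bookkeeping (including the correct weight shift to $2\la+2a$ before applying \eqref{eq:qbinom-hc}) that the paper leaves implicit.
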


\begin{rem}   
Note $\p_n(1)=0$ for $n\ge 2$, and $\p_0(1)=\p_1(1)=1$. 
The formulae in Theorems~\ref{thm:dvp:oddKappa} and  \ref{thm:dvp:oddKappa2} in the special case for $\ka=1$ recover the formulae in \cite[Theorem~4.1, Proposition~4.3]{BeW18}.
\end{rem}

\begin{example}
The formulae of $\dvp{n}$, for $1\le n\le 3$, in Theorem~\ref{thm:dvp:oddKappa} reads as follows. 
\begin{align*}
\dvp{1} 
&= F+ \Y + K^{-1},
\\
\dvp{2} 
&= b^{(2)}  + q^3 [\kk;0]+\ka( q^{-1}K^{-1}F +q^{-1}\Y K^{-1} ) +  \frac{\ka^{2}-1}{[2]} K^{-2},
\\
\dvp{3} 
&=  b^{(3)} + q \Y [\kk;0] + q [\kk;0] F 
+ \ka( q^{-2} \Y^{(2)} K^{-1} + q^{-3} \Y K^{-1} F + q^{-2} K^{-1} F^{(2)} + q [\kk;0] K^{-1})
\\
 & \qquad + q^{-2}\frac{\ka^{2}-1}{[2]}(\Y K^{-2} + K^{-2} F)  
 + \frac{\ka^3 -\ka}{[3]!} K^{-3}.
\end{align*}
\end{example}

\subsection{The $\imath$-canonical basis for $\Udot_{\rm{ev}}$ with odd $\ka$}
  \label{sec:iCB:evwt-oddK}

Recall from \S\ref{sec:iCB:ev} the $\imath$-canonical basis on simple $\U$-modules $L(\mu)$, for $\mu \in \N$. 
  
\begin{thm}  \label{thm:iCB:evwt-oddK}
    $\quad$
\begin{enumerate}
\item
Let $n\in \N$. 
For each integer $\la \gg n$, the element $\dvp{n} \vev$ is an $\imath$-canonical basis element for $L(2\la)$. 
 
 \item
The set $\{\dvp{n} \mid n \in \N \}$ forms the $\imath$-canonical basis for ${\U}^\imath$ (and an $\A$-basis in ${}_\A \Ui_{\rm{ev}}$). 
\end{enumerate}
\end{thm}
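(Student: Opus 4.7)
The plan is to imitate the proofs of Theorems~\ref{thm:iCB:ev} and \ref{thm:iCB:odd} line by line, replacing $p^{(n)}$ by $\p^{(n)}$ throughout and invoking Proposition~\ref{prop:fg:odd} in place of Proposition~\ref{prop:fg:ev}. First apply the expansion formulae \eqref{t2m:oddevKa}--\eqref{t2m+1:oddevKa} of Theorem~\ref{thm:dvp:oddKappa2} to the highest weight vector $\vev$. Because $\Y^{(a)}\vev = 0$ for $a \ge 1$, only the $a = 0$ summands survive. Commuting each $K^{b-2m+2c}$ past $\qbinom{\kk;m-c}{c}$ (both are polynomials in $K^{-1}$) and applying \eqref{eq:qbinom-hc} together with $K\vev = q^{2\la}\vev$, I expect to obtain the analogues of \eqref{dvev:2m-mod}--\eqref{dvev:2m-1-mod},
\begin{align*}
\dvp{2m}\vev   &= \sum_{b=0}^{2m}\sum_{c\ge 0} q^{(b-2\la)(2m-b-2c)   - 2c^2 - c}\,\p^{(2m-b-2c)}(\ka)\,\cbinom{m-\la-c}{c}\,F^{(b)}\vev,\\
\dvp{2m+1}\vev &= \sum_{b=0}^{2m+1}\sum_{c\ge 0} q^{(b-2\la)(2m-b-2c+1) - 2c^2 + c}\,\p^{(2m-b-2c+1)}(\ka)\,\cbinom{m-\la-c}{c}\,F^{(b)}\vev.
\end{align*}

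Next I will verify the asymptotic estimate. Since $\ka=[2\ell-1]$ has a finite top $q$-degree (depending only on the fixed integer $\ell$) and $\p_n$ is monic of degree $n$ with coefficients in $\A$, the quantity $\deg_q\p^{(n)}(\ka)$ is bounded in terms of $n$ and $\ell$ alone. Combined with $\cbinom{m-\la-c}{c}\in\N[q^{-1}]$ for $\la\ge m$ (see \eqref{eq:q-1}), the factor $q^{(b-2\la)(n-b-2c)}$ becomes arbitrarily negative for $\la\gg n$ whenever $b<n$ and $n-b-2c>0$, while for the boundary case $c\ge 1$ with $n-b-2c=0$ the prefactor $q^{-2c^2\pm c}$ already lies in $q^{-1}\Z[q^{-1}]$. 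Hence every summand with $b<n$ contributes to $q^{-1}\Z[q^{-1}]F^{(b)}\vev$, and the unique top-order contribution, at $b=n$, $c=0$, equals $F^{(n)}\vev$ (since $\p_0\equiv 1$ and $\cbinom{m-\la}{0}=1$). Therefore
\[
\dvp{n}\vev \;\in\; F^{(n)}\vev + \sum_{b<n} q^{-1}\Z[q^{-1}]\,F^{(b)}\vev, \qquad \text{for } \la \gg n.
\]

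Together with the $\psi_\imath$-invariance of $\dvp{n}$ forced by its defining recursion in $t$, the characterization properties (iCB1)--(iCB2) recalled in \S\ref{sec:iCB:ev} identify $\dvp{n}\vev$ as the $\imath$-canonical basis element of $L(2\la)$ indexed by $n$, proving part~(1). Part~(2) then follows at once from the projective-system construction of the $\imath$-canonical basis of $\Ui$ via $\{L(2\la)\}_{\la\in\N}$ and the $\Ui$-equivariant projections $L(2\la+2)\to L(2\la)$ of \cite[\S6]{BW16}, which simultaneously yields the $\A$-basis assertion for ${}_\A\Ui_{\rm ev}$. The principal obstacle is the $q$-power bookkeeping behind the asymptotic estimate: one must track $(b-2\la)(n-b-2c)$ against $-2c^2\pm c$, the top degree of $\p^{(n-b-2c)}(\ka)$, and the top degree of $\cbinom{m-\la-c}{c}$ uniformly in $b$ and $c$, and conclude strict negativity for $b<n$ and $\la\gg n$. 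This step is structurally identical to the corresponding estimate in \S\ref{sec:iCB:ev}, the only new ingredient being the sharp integrality and degree bound on $\p^{(n)}(\ka)$ supplied by Proposition~\ref{prop:fg:odd}.
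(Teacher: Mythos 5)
Your proposal matches the paper's own proof essentially verbatim: specialize Theorem~\ref{thm:dvp:oddKappa2} to $\vev$ so only the $a=0$ terms survive, apply \eqref{eq:qbinom-hc} to arrive at exactly the double-sum formulas \eqref{t2m:evwt-oF}--\eqref{t2m+1:evwt-oF}, run the same asymptotic estimate as for \eqref{eq:lattice} (using the degree bound on $\p^{(n)}(\ka)$ from Proposition~\ref{prop:fg:odd} and $\cbinom{m-\la-c}{c}\in\N[q^{-1}]$), and conclude via (iCB1)--(iCB2) and the projective system $\{L(2\la)\}$. The computations and $q$-power bookkeeping are correct, so nothing further is needed.
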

 
 \begin{proof}
Let $\la, m \in \N$. Recall $\cbinom{m}{c}$ from \eqref{cbinom} and $\qbinom{\kk; a}{c}$ from \eqref{kbinom}. It follows by a direct computation using Theorem~\ref{thm:dvp:oddKappa2} and \eqref{eq:qbinom-hc} that 
\begin{align}
\dvp{2m} \vev &= 
\sum_{b=0}^{2m} \sum_{c\geq 0} 
(-1)^c q^{c +b(2m-b-2c)}  \p^{(2m-b-2c)}(\kappa) F^{(b)} K^{b-2m+2c} \qbinom{\kk;m-c}{c}  \vev
\notag
\\
&=  \sum_{b=0}^{2m} \sum_{c \geq 0} q^{-2c^2-c +(b-2\la)(2m-b-2c)}  \p^{(2m-b-2c)}(\kappa) \cbinom{m-\la-c}{c}  F^{(b)}  \vev.
\label{t2m:evwt-oF}  
\end{align}
Similarly using Theorem~\ref{thm:dvp:oddKappa2} we have 
\begin{align}
\dvp{2m+1} \vev &=  
\sum_{b=0}^{2m+1} \sum_{c\geq 0} 
(-1)^c q^{3c +b(2m-b-2c+1)} \p^{(2m-b-2c+1)}(\kappa)
 F^{(b)} K^{b-2m+2c-1} \qbinom{\kk;m-c}{c} \vev
\notag
\\
&=  \sum_{b=0}^{2m+1} \sum_{c \geq 0} q^{-2c^2+c +(b-2\la)(2m-b-2c+1)} \p^{(2m-b-2c+1)}(\kappa) \cbinom{m-\la-c}{c} F^{(b)} \vev. 
\label{t2m+1:evwt-oF} 
\end{align}

By a similar argument for \eqref{eq:lattice}, using \eqref{t2m:evwt-oF}--\eqref{t2m+1:evwt-oF}   we obtain
 \begin{align*}
 \dvp{n} \vev  & \in F^{(n)}  \vev + \sum_{b<n}q^{-1} \Z[q^{-1}] F^{(b)} \vev,
 \qquad \text{ for } \la \gg n.
 \end{align*}
 
The second statement follows now from the definition of the $\imath$-canonical basis for $\Ui$ using
the projective system $\{ L(2\la) \}_{\la\ge 0}$; cf. \cite[\S6]{BW18b}. 
 \end{proof}

\subsection{Proof of Theorem~\ref{thm:dvp:oddKappa} }
  \label{subsec:proof:dvp:oddKappa}
We prove the formulae for $\dvp{n}$ by induction on $n$, in two steps (1)--(2) below. The base cases when $n=1,2$ are clear.

(1) We shall prove the formula \eqref{t2m+1:evodd} for $\dvp{2m+1}$, assuming the formula \eqref{t2m:evodd} for $\dvp{2m}$.

Recall $[2m+1] \dvp{2m+1} = t \cdot \dvp{2m}$, and $t= F + \Y  +\ka K^{-1}$. 
Let us compute 
\[
I=\Y  \dvp{2m},
\qquad
II=F \dvp{2m},
\qquad
III=\ka K^{-1} \dvp{2m}.
\]

First we have
\begin{align*}
I 
&= \sum_{b=0}^{2m} \sum_{a=0}^{b} \sum_{c\geq 0} 
q^{\binom{2c}{2}+2c - b(2m-b-2c) -a(b-a)} 
[a+1] \p^{(2m-b-2c)}(\kappa) 
 \\
&\qquad \qquad \qquad\quad \cdot \Y^{(a+1)}  \qbinom{\kk;1-m}{c} K^{b-2m+2c} F^{(b-a)}, 
\\%
&= \sum_{b=0}^{2m+1} \sum_{a=0}^{b} \sum_{c\geq 0} 
q^{\binom{2c}{2}+2c - (b-1)(2m-b-2c+1) -(a-1)(b-a)} 
[a] \p^{(2m-b-2c+1)}(\kappa) 
 \\
&\qquad \qquad \qquad\quad \cdot \Y^{(a)}  \qbinom{\kk;1-m}{c} K^{b-2m+2c-1} F^{(b-a)},
\end{align*}
where the last equation is obtained by shifting indices $a\to a-1$, $b\to b-1$.
Using \eqref{FYn} we  have
\begin{align*}
II  
&=  \sum_{b=0}^{2m} \sum_{a=0}^{b} \sum_{c\geq 0} q^{\binom{2c}{2}+2c - b(2m-b-2c) -a(b-a)} 
\p^{(2m-b-2c)}(\kappa) 
 \\
& \qquad\quad \cdot 
 \left( 
   q^{-2a} \Y^{(a)} F + \Y^{(a-1)} \frac{q^{3-3a}K^{-2}-q^{1-a}}{q^2-1}
  \right )
\qbinom{\kk;1-m}{c} K^{b-2m+2c} F^{(b-a)}
=II^1 +II^2, 
\\
\end{align*}  
where $II^1$ and $II^2$ are the two natural summands associated to the plus sign.
By shifting index $b\to b-1$, we further have
\begin{align*}
II^1
&=\sum_{b=0}^{2m+1} \sum_{a=0}^{b} \sum_{c\geq 0} 
q^{\binom{2c}{2}+2c -(b+1)(2m-b-2c+1) -a(b-a+1)} 
[b-a] \p^{(2m-b-2c+1)}(\kappa) 
 \\
&\qquad \qquad \qquad\quad \cdot 
   \Y^{(a)}  \qbinom{\kk;-m}{c} K^{b-2m+2c-1} F^{(b-a)}.
\end{align*}
By shifting the indices $a\to a+1$, $b\to b+1$ and $c\to c-1$, we further have
\begin{align*}
II^2 
&=\sum_{b=0}^{2m+1} \sum_{a=0}^{b} \sum_{c\geq 0} 
q^{\binom{2c-2}{2}+2c - (b+1)(2m-b-2c+1) -(a+1)(b-a)-2} 
\p^{(2m-b-2c+1)}(\kappa) 
 \\
&\qquad \qquad \qquad\quad \cdot 
 \Y^{(a)} \frac{q^{-3a}K^{-2}-q^{-a}}{q^2-1}
 \qbinom{\kk;1-m}{c-1} K^{b-2m+2c-1} F^{(b-a)}.
\end{align*}  
Using \eqref{eq:pn2div} we also compute
\begin{align*}
III 
&= \sum_{b=0}^{2m} \sum_{a=0}^{b} \sum_{c\geq 0} 
q^{\binom{2c}{2}+2c - b(2m-b-2c) -a(b-a)-2a} 
\ka \cdot \p^{(2m-b-2c)}(\kappa) 
 \\
&\qquad \qquad \qquad\quad \cdot \Y^{(a)}  \qbinom{\kk;1-m}{c} K^{b-2m+2c-1} F^{(b-a)}
\\
&= \sum_{b=0}^{2m+1} \sum_{a=0}^{b} \sum_{c\geq 0} 
q^{\binom{2c}{2}+2c - b(2m-b-2c) -a(b-a)-2a}  [2m-b-2c+1]  
 \\
&\qquad \qquad \qquad\quad \cdot 
\p^{(2m-b-2c+1)}(\kappa) \Y^{(a)}  \qbinom{\kk;1-m}{c} K^{b-2m+2c-1} F^{(b-a)}
\\%
&\quad - \sum_{b=0}^{2m+1} \sum_{a=0}^{b} \sum_{c\geq 0} 
q^{\binom{2c-2}{2}+2c - (b+2) (2m-b-2c+2) -a(b-a)-2a}  [2m-b-2c]  
 \\
&\qquad \qquad \qquad\quad \cdot
\p^{(2m-b-2c+1)}(\kappa)  \Y^{(a)}  \qbinom{\kk;1-m}{c-1} K^{b-2m+2c-3} F^{(b-a)}.
\end{align*}
(Note that we have shifted the index $c\to c-1$ in the last summand above.)

Collecting the formulae for $I, II^1, II^2$ and $III$  gives us
\[
t \cdot \dvp{2m} = 
 \sum_{0 \le a \le b \le 2m+1}  \sum_{c\ge 0}  \p^{(2m-b-2c+1)}(\kappa)  \Y^{(a)} 
 \ms H_{a,b,c}  K^{b-2m+2c-1} F^{(b-a)}, 
\]
where   
\begin{align*}
\ms H_{a,b,c} := 
& \; q^{\binom{2c}{2}+2c - (b-1)(2m-b-2c+1) -(a-1)(b-a)} 
[a]  \qbinom{\kk;1-m}{c}  
\\
&+q^{\binom{2c}{2}+2c -(b+1)(2m-b-2c+1) -a(b-a+1)} 
[b-a] \qbinom{\kk;-m}{c} 
\\
&+q^{\binom{2c-2}{2}+2c - (b+1)(2m-b-2c+1) -(a+1)(b-a)-2} 
 \frac{q^{-3a}K^{-2}-q^{-a}}{q^2-1} \qbinom{\kk;1-m}{c-1}  
\\
&+q^{\binom{2c}{2}+2c - b(2m-b-2c) -a(b-a)-2a}  
[2m-b-2c+1]    \qbinom{\kk;1-m}{c}
\\
&+q^{\binom{2c-2}{2}+2c - (b+2) (2m-b-2c+2) -a(b-a)-2a}  
[2m-b-2c]   \qbinom{\kk;1-m}{c-1} K^{-2}.
\end{align*}
Recall $[2m+1] \dv{2m+1} = t \cdot \dvp{2m}$. To prove the formula \eqref{t2m+1:evodd} for $\dv{2m+1}$,  by the PBW basis theorem and the inductive assumption it suffices to prove the following identity, for all $a,b,c$:
\begin{align}  \label{eq:H4}
\ms H_{a,b,c} = q^{\binom{2c}{2} -b(2m-b-2c+1) -a(b-a)}
 [2m+1] \qbinom{\kk;1-m}{c}.
\end{align}
Thanks to $q^{2m-a+1} [a] -[2m+1] = q^{-a} [a-2m-1]$, we can combine the RHS\eqref{eq:H4} with the first summand of LHS\eqref{eq:H4}.
Hence, after canceling out the $q$-powers $q^{\binom{2c}{2}- b(2m-b-2c+1) -a(b-a) -a}$
on both sides, we see that \eqref{eq:H4} is equivalent to the following identity, for all $a, b, c$:
\begin{equation}  \label{ABCD4}
\ms A+\ms B+\ms C+\ms D_1 +\ms D_2=0,
\end{equation}
where 
\begin{align*}
\ms A &= [a-2m-1] \qbinom{\kk;1-m}{c},
\\
\ms B &= q^{4c-2m+b-1} [b-a]  \qbinom{\kk;-m}{c},   
\\
\ms C &= q^{2a-2m} \frac{q^{-3a}K^{-2}-q^{-a}}{q^2-1} \qbinom{\kk;1-m}{c-1},   
\\
\ms D_1 &= q^{2c+b-a} [2m-b-2c+1]  \qbinom{\kk;1-m}{c},
\\
\ms D_2 &= - q^{2c-4m+b-a-1} [2m-b-2c] \qbinom{\kk;1-m}{c-1} K^{-2}.
\end{align*}

Let us prove the identity \eqref{ABCD4}. Using \eqref{kbinom2}, we can write $\ms B=\ms B_1+\ms B_2$, where 
\begin{align*}
\ms B_1 &=  q^{4c-2m+b-1} [b-a]  \qbinom{\kk;-m}{c},
   \quad
\ms B_2 = -  q^{4c-6m+b-1} [b-a]  \qbinom{\kk;-m}{c}.
\end{align*}
Noting that  
\[
q^{2a-2m}\frac{q^{-3a}K^{-2}-q^{-a}}{q^2-1} = q^{2m-4c-a}\frac{( q^{4c-4m}K^{-2}-1)}{q^2-1}
 +  q^{-2c-1} [2m-2c-a],
 \] 
we rewrite $\ms C=\ms C_1+\ms C_2$, where
\begin{align*}
\ms C_1 &= q^{2m-2c-a-1} [2c] \qbinom{\kk; 1-m}{c},
   \quad
\ms C_2 =   q^{-2c-1} [2m-2c-a]  \qbinom{\kk; 1-m}{c-1}.
\end{align*}

A direct computation gives us
\begin{align*}
\ms A+\ms B_1+\ms C_1+\ms D_1 
&=(1-q^{-2}) [2c] [2m-2c-a]  \qbinom{\kk; 1-m}{c},
\\
\ms C_2 +(\ms B_2+\ms D_2)  
&= \ms C_2 -  q^{2c-4m-1} [2m-2c-a] \qbinom{\kk; 1-m}{c-1} K^{-2}
\\
&= - (1-q^{-2}) [2c] [2m-2c-a]  \qbinom{\kk; 1-m}{c}.
\end{align*}
Summing up these two equations, we have $\ms A+\ms B+\ms C+\ms D_1+\ms D_2=0,$ whence \eqref{ABCD4}, completing Step~(1). 

\vspace{3mm}

(2) Assuming the formulae for $\dvp{n}$ with $n\le 2m+1$, we shall now prove the following formula \eqref{t2m+2:evodd} for $\dvp{2m+2}$ (obtained with $m$ replaced by $m+1$ in \eqref{t2m:evodd}):
\begin{align}
\dvp{2m+2} &= 
\sum_{b=0}^{2m+2}  \sum_{a=0}^{b} \sum_{c \geq 0} q^{\binom{2c}{2} +2c- b(2m-b-2c+2) -a(b-a)} 
\p^{(2m-b-2c+2)}(\kappa) 
  \label{t2m+2:evodd}
 \\
&\qquad \qquad \qquad\quad \cdot \Y^{(a)}  \qbinom{\kk;-m}{c} K^{b-2m+2c-2} F^{(b-a)}. 
\notag
\end{align}
The proof is based on the recursion $t \cdot \dvp{2m+1} =[2m+2] \dvp{2m+2} +[2m+1] \dvp{2m}$. 
Recall $t= F +\Y +\ka K^{-1}$ and $\dvp{2m+1}$ from \eqref{t2m+1:evodd}. We shall compute 
\[
\texttt{I}=\Y  \dvp{2m+1},
\qquad
\texttt{II}=F \dvp{2m+1},
\qquad
\texttt{III} =\ka K^{-1} \dvp{2m+1}, 
\]
respectively. First by by shifting indices $a\to a-1$ and $b\to b-1$, we have
\begin{align*}
\texttt{I} 
&=\sum_{b=0}^{2m+1} \sum_{a=0}^{b} \sum_{c \geq 0} q^{\binom{2c}{2} - b(2m-b-2c+1) -a(b-a)}
[a+1] \p^{(2m-b-2c+1)}(\kappa) 
\\
&\qquad\qquad\qquad\quad \cdot \Y^{(a+1)}  \qbinom{\kk;1-m}{c} K^{b-2m+2c-1} F^{(b-a)}
\\
&=\sum_{b=0}^{2m+2} \sum_{a=0}^{b} \sum_{c \geq 0} 
q^{\binom{2c}{2} - (b-1)(2m-b-2c+2) -(a-1)(b-a)}
[a] \p^{(2m-b-2c+2)}(\kappa) 
\\
&\qquad\qquad\qquad\quad \cdot \Y^{(a)}  \qbinom{\kk;1-m}{c} K^{b-2m+2c-2} F^{(b-a)}.
\end{align*}
Using \eqref{FYn} we   also have
\begin{align*}
\texttt{II} 
&= \sum_{b=0}^{2m+1} \sum_{a=0}^{b} \sum_{c \geq 0} q^{\binom{2c}{2} - b(2m-b-2c+1) -a(b-a)}
 \p^{(2m-b-2c+1)}(\kappa) 
\\
&\quad \cdot 
 \left( 
   q^{-2a} \Y^{(a)} F + \Y^{(a-1)} \frac{q^{3-3a}K^{-2}-q^{1-a}}{q^2-1}
  \right )
 \qbinom{\kk;1-m}{c} K^{b-2m+2c-1} F^{(b-a)}
=\texttt{II}_1 +\texttt{II}_2,
\end{align*}
where $\texttt{II}_1$ and $\texttt{II}_2$ are the two natural summands associated to the plus sign.
By shifting the index $b\to b-1$, we further have
\begin{align*}
\texttt{II}_1 
&= \sum_{b=0}^{2m+2} \sum_{a=0}^{b} \sum_{c \geq 0} 
q^{\binom{2c}{2} - (b+1)(2m-b-2c+2) -a(b-a-1)-2a}
[b-a] \p^{(2m-b-2c+2)}(\kappa) 
\\
&\qquad\qquad\qquad\quad \cdot 
    \Y^{(a)} 
    \qbinom{\kk;-m}{c} K^{b-2m+2c-2} F^{(b-a)}. 
\end{align*}
By shifting the indices $a\to a+1$, $b\to b+1$ and $c\to c-1$, we further have
\begin{align*}
\texttt{II}_2 
&= \sum_{b=0}^{2m+2} \sum_{a=0}^{b} \sum_{c \geq 0} 
q^{\binom{2c-2}{2} - (b+1)(2m-b-2c+2) -(a+1)(b-a)}
 \p^{(2m-b-2c+2)}(\kappa) 
\\
&\qquad\qquad\qquad\quad \cdot 
 \Y^{(a)} \frac{q^{-3a}K^{-2}-q^{-a}}{q^2-1}
 \qbinom{\kk;1-m}{c-1} K^{b-2m+2c-2} F^{(b-a)}.
 \end{align*}

Using \eqref{eq:pn2div} we also compute
\begin{align*}
\texttt{III}  
&= \sum_{b=0}^{2m+1} \sum_{a=0}^{b} \sum_{c \geq 0} q^{\binom{2c}{2} - b(2m-b-2c+1) -a(b-a)-2a}
\ka \cdot \p^{(2m-b-2c+1)}(\kappa) 
\\
&\qquad\qquad\qquad\quad \cdot \Y^{(a)}  \qbinom{\kk;1-m}{c} K^{b-2m+2c-2} F^{(b-a)}
\\
&= \sum_{b=0}^{2m+2} \sum_{a=0}^{b} \sum_{c \geq 0} q^{\binom{2c}{2} - b(2m-b-2c+1) -a(b-a)-2a}
[2m-b-2c+2]  
\\
&\qquad\qquad\qquad\quad \cdot 
\p^{(2m-b-2c+2)}(\kappa) \Y^{(a)}  \qbinom{\kk;1-m}{c} K^{b-2m+2c-2} F^{(b-a)}
\\%
& - \sum_{b=0}^{2m+2} \sum_{a=0}^{b} \sum_{c \geq 0} 
q^{\binom{2c-2}{2} - (b+2)(2m-b-2c+3) -a(b-a)-2a+2}
[2m-b-2c+1] 
\\
&\qquad\qquad\qquad\quad \cdot 
\p^{(2m-b-2c+2)}(\kappa) \Y^{(a)}  \qbinom{\kk;1-m}{c-1} K^{b-2m+2c-4} F^{(b-a)}.
\end{align*}
(Note that we have shifted the index $c\to c-1$ in the last summand above.)

Collecting the formulae for $\texttt{I}, \texttt{II}_1, \texttt{II}_2$, and $\texttt{III}$, we obtain 
\[
t \cdot \dvp{2m+1} = 
 \sum_{0 \le a \le b \le 2m+2}  \sum_{c\ge 0} \p^{(2m-b-2c+2)}(\kappa)  
  \Y^{(a)} \ms L_{a,b,c} K^{b-2m+2c-2} F^{(b-a)},
\]
where  
\begin{align*}
\ms L_{a,b,c} := 
& \; q^{\binom{2c}{2} - (b-1)(2m-b-2c+2) -(a-1)(b-a)} 
[a] \qbinom{\kk;1-m}{c} 
\\
&+q^{\binom{2c}{2} - (b+1)(2m-b-2c+2) -a(b-a-1)-2a} 
[b-a]  \qbinom{\kk;-m}{c}  
\\
&+q^{\binom{2c-2}{2} - (b+1)(2m-b-2c+2) -(a+1)(b-a)}
\frac{q^{-3a}K^{-2}-q^{-a}}{q^2-1} \qbinom{\kk;1-m}{c-1}  
\\
&+q^{\binom{2c}{2} - b(2m-b-2c+1) -a(b-a)-2a}
[2m-b-2c+2]  \qbinom{\kk;1-m}{c}
\\
&- q^{\binom{2c-2}{2} - (b+2)(2m-b-2c+3) -a(b-a)-2a+2}
[2m-b-2c+1]  \qbinom{\kk;1-m}{c-1} K^{-2}. 
\end{align*}

On the other hand, using \eqref{t2m:evodd} (with an index shift $c\to c-1$) and \eqref{t2m+2:evodd} we write 
\begin{align*}
 [2m+2] & \dvp{2m+2} +[2m+1] \dvp{2m} 
\\
&=\sum_{0 \le a \le b \le 2m+2} \sum_{c \geq 0} 
\p^{(2m-b-2c+2)}(\kappa) \Y^{(a)} \ms R_{a,b,c} K^{b-2m+2c-2} F^{(b-a)}, 
\end{align*}
where  
\begin{align*}
\ms R_{a,b,c} 
&:= q^{\binom{2c}{2} +2c- b(2m-b-2c+2) -a(b-a)} 
[2m+2]  \qbinom{\kk;-m}{c}  
\\
&\qquad + q^{\binom{2c-2}{2} +2c- b(2m-b-2c+2) -a(b-a)-2} 
[2m+1]  \qbinom{\kk;1-m}{c-1}.    
\end{align*}

To prove the formula \eqref{t2m+2:oddev} for $\dvp{2m+2}$, it suffices to show that, for all $a,b,c$,
\begin{equation}
  \label{L=R4}
\ms L_{a,b,c}= \ms R_{a,b,c}.
\end{equation}
Canceling the $q$-powers $q^{\binom{2c}{2} - (b-1)(2m-b-2c+2) -(a-1)(b-a)}$ on both sides, we see that the identity \eqref{L=R4} is equivalent to the following identity, for all $a,b,c$:
\begin{align}
 \label{l=r4}
 \begin{split}
[a] \qbinom{\kk;1-m}{c}  
&+q^{4c-4m+b-4}  [b-a]  \qbinom{\kk;-m}{c}   
\\
&+q^{2a-4m-1} \frac{q^{-3a}K^{-2}-q^{-a}}{q^2-1} \qbinom{\kk;1-m}{c-1}   
\\
&+q^{2c-2m+b-a-2} [2m-b-2c+2]  \qbinom{\kk;1-m}{c}
\\
&- q^{2c-6m+b-a-3} [2m-b-2c+1]  \qbinom{\kk;1-m}{c-1} K^{-2}
\\
&= q^{4c-2m+a-2}  [2m+2]  \qbinom{\kk;-m}{c}  
+ q^{a-2m-1}  [2m+1]  \qbinom{\kk;1-m}{c-1}.  
   \end{split}
\end{align}

By combining the second summand of LHS with the first summand of RHS as well as combining the third summand of LHS with the second summand of RHS, the identity \eqref{l=r4} is reduced to the following equivalent identity, for all $a, b, c$:
\begin{equation}  \label{WXYZ4}
\ms W+\ms X+\ms Y+\ms Z_1+\ms Z_2=0,
\end{equation}
where  
\begin{align*}
\ms W & =[a] \qbinom{\kk;1-m}{c},
\\
\ms X & =q^{4c-2m+b-2}  [b-a-2m-2]  \qbinom{\kk;-m}{c},   
\\
\ms Y & =  \frac{q^{-a-4m-1}K^{-2}-q^{a+1}}{q^2-1} \qbinom{\kk;1-m}{c-1},    
\\
\ms Z_1 & =q^{2c-2m+b-a-2} [2m-b-2c+2]  \qbinom{\kk;1-m}{c},
\\
\ms Z_2 & = - q^{2c-6m+b-a-3} [2m-b-2c+1]  \qbinom{\kk;1-m}{c-1} K^{-2}.
\end{align*}

Let us finally prove the identity \eqref{WXYZ4}. Using \eqref{kbinom2}, we can write $\ms X=\ms X_1+\ms X_2$, where 
\begin{align*}
\ms X_1 &=q^{4c-2m+b-2}  [b-a-2m-2]  \qbinom{\kk; 1-m}{c},
   \\
\ms X_2 &= - q^{4c-6m+b-2}  [b-a-2m-2]  \qbinom{\kk; 1-m}{c-1} K^{-2}.
\end{align*}
Noting that  
\[
\frac{q^{-a-4m-1}K^{-2}-q^{a+1}}{q^2-1}  
= q^{-4c-a-1}\frac{( q^{4c-4m}K^{-2}-1)}{q^2-1}
 +  q^{-2c-1} [-2c-a-1],
 \] 
we rewrite $\ms Y=\ms Y_1+\ms Y_2$, where
\begin{align*}
\ms Y_1 &=q^{-2c-a-2} [2c] \qbinom{\kk; 1-m}{c},
   \qquad
\ms Y_2 = -q^{-2c-1} [2c+a+1] \qbinom{\kk; 1-m}{c-1}.
\end{align*}
A direct computation shows that
\begin{align*}
\ms W+\ms X_1+\ms Y_1+\ms Z_1 &=
- (1-q^{-2}) [2c+a+1] [2c] \qbinom{\kk; 1-m}{c},
\\
(\ms X_2+\ms Z_2) +\ms Y_2  
&= q^{2c-4m-1} [2c+a+1] \qbinom{\kk; 1-m}{c-1} K^{-2} +\ms Y_2
\\
&= (1-q^{-2}) [2c+a+1] [2c] \qbinom{\kk; 1-m}{c}.
\end{align*}
Summing up these two equations, we obtain $\ms W+\ms X+\ms Y+\ms Z_1+\ms Z_2=0$, whence \eqref{WXYZ4}, completing Step ~(2).

The proof of Theorem~\ref{thm:dvp:oddKappa} is completed. \qed

\section{The $\imath$-divided powers $\dvd{n}$ for odd weights and odd $\ka$}
  \label{sec:oddoddK}
  
In this section~\ref{sec:oddoddK} we always take $\ka$ to be an odd $q$-integer, i.e., 
\begin{equation*}
\ka =[2\ell-1], \qquad  \text{ for } \ell  \in \Z.
\end{equation*} 

\subsection{Definition of $\dvd{n}$ for odd $\ka$}

\begin{definition}
Set $\dvd{1} =t= F +\Y +\ka K^{-1}$. 
The divided powers $\dvd{n}$, for $n\ge 1$, are defined by the recursive relations: 
\begin{align}   
  \label{eq:tt:evoddKa}
\begin{split}
t \cdot \dvd{2a-1} &=[2a] \dvd{2a},
\\
t \cdot \dvd{2a} &=  [2a+1] \dvd{2a+1} +   [2a] \dvd{2a-1}, \quad \text{ for } a\ge 1.
\end{split}
\end{align}   
\end{definition}
Equivalently, we have the following closed formula for $\dvd{n}$:
\begin{align}
\label{def:dvd:evoddKa}
\dvd{n} = 
\begin{cases}
\frac{t}{[2a]!} (t  - [-2a+2])(t -[-2a+4]) \cdots (t -[2a-4]) (t  - [2a-2]), & \text{if } n=2a, \\
\\
\frac{1}{[2a+1]!} (t  - [-2a])(t -[-2a+2]) \cdots (t -[2a-2]) (t  - [2a]), &\text{if } n=2a+1.
\end{cases}
\end{align}
Note the formulae for $\dvd{n}$ with odd $\ka$ is formally identical to the formulae for $\dvev{n}$ with even $\ka$. 

\subsection{Formulae for $\dvd{n}$ with odd $\ka$}

Recall the polynomials $\p^{(n)}$, for $n\ge 0$, from \S\ref{subsec:pn2}.

\begin{thm}  
   \label{thm:dvd:oddKappa}
Assume $\ka$ is an odd $q$-integer. Then we have, for $m\ge 1$,   
\begin{align}
\dvd{2m} &= 
\sum_{b=0}^{2m} \sum_{a=0}^{b} \sum_{c \geq 0} q^{\binom{2c}{2}-2c -b(2m-b-2c)-a(b-a)} 
\p^{(2m-b-2c)}(\kappa)
\label{t2moddodd} \\
&\qquad \qquad \qquad\quad \cdot \Y^{(a)}  \LR{\kk;2-m}{c} K^{b-2m+2c} F^{(b-a)}, 
\notag
\\
\dvd{2m-1} &= 
\sum_{b=0}^{2m-1} \sum_{a=0}^{b} \sum_{c \geq 0} q^{\binom{2c}{2} - b(2m-b-2c-1) -a(b-a)}
 \p^{(2m-b-2c-1)}(\kappa)
\label{t2m-1oddodd}
\\
&\qquad\qquad\qquad\quad \cdot \Y^{(a)}  \LR{\kk;2-m}{c} K^{b-2m+2c+1} F^{(b-a)}. 
\notag
\end{align}
\end{thm}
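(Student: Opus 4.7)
The plan is to prove Theorem~\ref{thm:dvd:oddKappa} by the same two-step induction scheme already executed three times in the paper, noting that the situation here is a hybrid: the recursion $t\cdot\dvd{2a-1} = [2a]\dvd{2a}$ and $t\cdot\dvd{2a} = [2a+1]\dvd{2a+1} + [2a]\dvd{2a-1}$ matches Section~\ref{sec:evevK} (the case of $\dvev{n}$), while the expansion formulae use the odd-$\ka$ polynomials $\p^{(n)}$ from Section~\ref{sec:evoddK} and the brackets $\LR{\kk;a}{c}$ from Section~\ref{sec:oddevK}. So the skeleton of the proof will be that of \S\ref{subsec:proof:dvev:evKappa}, with the polynomial recursion \eqref{recursive:f} replaced by \eqref{eq:pn2div} and the bracket reduction \eqref{kbinom2} replaced by \eqref{eq:commLR}. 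The base cases $n=1,2$ are immediate from $\dvd{1}=t$ and \eqref{def:dvd:evoddKa}.

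For Step~(1), assume \eqref{t2m-1oddodd} and expand $[2m]\dvd{2m} = (F+\Y+\ka K^{-1})\,\dvd{2m-1}$ into three summands $I, II, III$. I will treat $I=\Y\cdot\dvd{2m-1}$ by the identity $\Y\cdot \Y^{(a)} = [a+1]\Y^{(a+1)}$ followed by the index shifts $a\mapsto a-1$, $b\mapsto b-1$; for $II=F\cdot\dvd{2m-1}$ I will commute $F$ past $\Y^{(a)}$ via \eqref{FYn}, producing pieces $II^1$ (the $q^{-2a}\Y^{(a)} F$ contribution, with a shift $b\mapsto b-1$) and $II^2$ (the $\Y^{(a-1)}\tfrac{q^{3-3a}K^{-2}-q^{1-a}}{q^2-1}$ contribution, with shifts $a\mapsto a+1$, $b\mapsto b+1$, $c\mapsto c-1$); for $III=\ka K^{-1}\cdot\dvd{2m-1}$ I will rewrite $\ka\cdot\p^{(2m-b-2c-1)}$ via \eqref{eq:pn2div}, generating a $[2m-b-2c]\,\p^{(2m-b-2c)}$ term plus a term with $\p^{(2m-b-2c-2)}$ that becomes $\p^{(2m-b-2c)}$ after a shift $c\mapsto c-1$. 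Throughout I move $K^{-\pm 2}$ past $\LR{\kk;a}{c}$ using \eqref{eq:commLR}.

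Collecting the five resulting coefficients in front of $\Y^{(a)}\LR{\kk;\,\cdot\,}{\,\cdot\,} K^{\,\cdot\,} F^{(b-a)}$ reduces the desired identity to a single equation of the form $\mc A+\mc B+\mc C+\mc D_1+\mc D_2=0$, the direct analogue of \eqref{ABCD3} but with the brackets $\LR{\kk;a}{c}$ shifted by $2-m$ rather than $1-m$ (reflecting the presence of $\LR{\kk;2-m}{c}$ in \eqref{t2moddodd}). I will resolve it by the same two-part trick used in \S\ref{subsec:proof:dv:evKappa}: split $\mc B$ into $\mc B_1+\mc B_2$ via \eqref{eq:commLR}, split $\mc C$ into $\mc C_1+\mc C_2$ via the decomposition
\[
\tfrac{q^{-3a}K^{-2}-q^{-a}}{q^2-1} = q^{4m-4c-3a}\cdot\tfrac{q^{4c-4m}K^{-2}-q^2}{q^2-1} + q^{2m-2c-2a}[2m-2c-a+1],
\]
and verify that the two groups $\mc A+\mc B_1+\mc C_1+\mc D_1$ and $\mc C_2+\mc B_2+\mc D_2$ are negatives of the same expression $(q-q^{-1})[2c][2m-2c-a+1]\LR{\kk;2-m}{c}$.

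For Step~(2), I will prove \eqref{t2m-1oddodd} with $m$ replaced by $m+1$ from $t\cdot\dvd{2m} = [2m+1]\dvd{2m+1}+[2m]\dvd{2m-1}$, applying the same expansion/shift procedure to $\dvd{2m}$ as given by \eqref{t2moddodd} and matching coefficients (modulo the PBW basis) to obtain an identity $\mc L_{a,b,c}=\mc R_{a,b,c}$ that reduces, after the analogous splittings, to a sum of the form $\mc W+\mc X+\mc Y+\mc Z_1+\mc Z_2=0$. The main obstacle will be purely bookkeeping: tracking the many $q$-powers under the index shifts and ensuring that the $\ell$-shift built into the new bracket $\LR{\kk;2-m}{c}$ (rather than $\LR{\kk;1-m}{c}$) propagates consistently through the reductions. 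No new conceptual ingredient is needed; once Step~(1) is written out, Step~(2) is a mechanical variant. The integrality $\dvd{n}\in {}_\A\Ui_{\rm odd}$ then follows from Proposition~\ref{prop:fg:odd} and an identity analogous to \eqref{eq:LRhc}, and the $\imath$-canonical basis property is verified exactly as in Theorem~\ref{thm:iCB:odd} by specialising to highest weight vectors and invoking \cite[\S6]{BW16}.
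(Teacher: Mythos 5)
Your plan is essentially identical to the paper's own proof of Theorem~\ref{thm:dvd:oddKappa}: the same two-step induction, the same decomposition of $t\cdot\dvd{n}$ into $I$, $II^1$, $II^2$, $III$ with the same index shifts, the same reduction to a five-term identity, and the same splitting of $\mc B$ via \eqref{eq:commLR} and of $\mc C$ via a partial-fraction-type identity. The one correction needed is that the splitting identity for $\mc C$ must be adapted to the bracket $\LR{\kk;2-m}{c}$ rather than $\LR{\kk;1-m}{c}$, namely $\frac{q^{-3a}K^{-2}-q^{-a}}{q^2-1} = q^{4m-4c-3a-4}\frac{q^{4c-4m+4}K^{-2}-q^2}{q^2-1} + q^{2m-2c-2a-2}[2m-2c-a-1]$, since only the factor $q^{4c-4m+4}K^{-2}-q^2$ converts $\LR{\kk;2-m}{c-1}$ into a multiple of $\LR{\kk;2-m}{c}$; the version you wrote (taken from \S\ref{subsec:proof:dv:evKappa}) would not close the cancellation, and correspondingly the common cancelling expression is $(1-q^{-2})[2c][2m-2c-a-1]\LR{\kk;2-m}{c}$.
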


The proof of Theorem~\ref{thm:dvd:oddKappa} will be given in \S\ref{subsec:proof:dvd:oddKappa} below. 
By applying the anti-involution $\vs$ we convert the formulae in Theorem~\ref{thm:dvd:oddKappa} as follows.

\begin{thm}   
 \label{thm:dvd:oddKappa2}
Assume $\ka$ is an odd $q$-integer. Then we have, for $m\ge 1$,   
\begin{align}
\dvd{2m} 
&= \sum_{b=0}^{2m} \sum_{a=0}^{b} \sum_{c \geq 0} 
(-1)^c  q^{c +b(2m-b-2c) +a(b-a)}  \p^{(2m-b-2c)}(\kappa) 
\label{t2moddodd2} \\
&\qquad\qquad\qquad\quad \cdot F^{(b-a)} K^{b-2m+2c} \LR{\kk;m-c}{c}  \Y^{(a)}, 
\notag
\\%
\dvd{2m-1} &= 
\sum_{b=0}^{2m-1} \sum_{a=0}^{b} \sum_{c \geq 0} 
(-1)^c q^{-c+ b(2m-b-2c-1) +a(b-a)}
 \p^{(2m-b-2c-1)}(\kappa)
\label{t2m-1oddodd2} \\
&\qquad\qquad\qquad\quad \cdot F^{(b-a)} K^{b-2m+2c+1}   \LR{\kk;m-c}{c}  \Y^{(a)}.
\notag
\end{align}
\end{thm}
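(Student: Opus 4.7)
The plan is to derive Theorem~\ref{thm:dvd:oddKappa2} as an immediate corollary of Theorem~\ref{thm:dvd:oddKappa} by applying the anti-involution $\vs$, following the pattern already used for each of the three preceding companion pairs (Theorems~\ref{thm:dvev:evKappa2}, \ref{thm:dv:evenKappa2}, and \ref{thm:dvp:oddKappa2} derived from their respective partners). No new structural ingredient is needed: the substantive identities have been established in Theorem~\ref{thm:dvd:oddKappa}, and the current statement is, up to sign and exponent bookkeeping, its mirror image under $\vs$.

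The key input is Lemma~\ref{lem:anti2}: the map $\vs$ fixes $\dvd{n}$, $F^{(a)}$, $\Y^{(a)}$, $K$, and the bar-invariant scalar $\ka$, inverts $q$, and acts by
\[
\LR{\kk;2-m}{c} \longmapsto (-1)^c q^{2c(c-1)} \LR{\kk;m-c}{c}.
\]
Applying $\vs$ to formula~\eqref{t2moddodd}, the anti-homomorphism property reverses each PBW monomial $\Y^{(a)} \LR{\kk;2-m}{c} K^{b-2m+2c} F^{(b-a)}$ into $F^{(b-a)} K^{b-2m+2c} \LR{\kk;m-c}{c} \Y^{(a)}$ modulo the factor $(-1)^c q^{2c(c-1)}$, while negating the base $q$-exponent $\binom{2c}{2}-2c-b(2m-b-2c)-a(b-a)$. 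Invoking the arithmetic identity $-\binom{2c}{2}+2c+2c(c-1)=c$, these combine to give the exponent $c+b(2m-b-2c)+a(b-a)$ with sign $(-1)^c$ claimed in \eqref{t2moddodd2}. Running the same procedure on \eqref{t2m-1oddodd}, the analogous identity $-\binom{2c}{2}+2c(c-1)=-c$ delivers the exponent $-c+b(2m-b-2c-1)+a(b-a)$ of \eqref{t2m-1oddodd2}.

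Because the theorem follows purely from Lemma~\ref{lem:anti2} and the previous theorem, there is no genuine obstacle; the argument is a one-shot bookkeeping exercise modelled verbatim on the proof of Theorem~\ref{thm:dvp:oddKappa2}. The only place a careless mistake could creep in is in the two short $q$-exponent identities above, so that is the step I would verify most carefully before committing the final formulae.
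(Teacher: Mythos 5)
Your proposal is correct and coincides with the paper's own proof: the paper likewise obtains Theorem~\ref{thm:dvd:oddKappa2} by applying the anti-involution $\vs$ to Theorem~\ref{thm:dvd:oddKappa} via Lemma~\ref{lem:anti2}, and your two exponent identities $-\binom{2c}{2}+2c+2c(c-1)=c$ and $-\binom{2c}{2}+2c(c-1)=-c$ check out.
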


\begin{proof}
Recall from Lemma~\ref{lem:anti2} that the anti-involution $\vs$ on $\U$   
fixes $F,  \Y,  K,  \dvd{n}, \ka$ while sending
$q \mapsto q^{-1}$, 
$\LR{\kk;2-m}{c}\mapsto (-1)^c q^{2c(c-1)}  \LR{\kk;m-c}{c}$.
The formulae \eqref{t2moddodd2}--\eqref{t2m-1oddodd2} now follow by applying $\vs$ to the formulae \eqref{t2moddodd}--\eqref{t2m-1oddodd} in Theorem~\ref{thm:dvd:oddKappa}. 
%
%
\end{proof}

The following corollary is immediate from \eqref{eq:LRhc}, Proposition~\ref{prop:fg:odd}, and Theorem~\ref{thm:dvd:oddKappa}. 

\begin{cor}  
We have $\dvd{n} \in {}_\A \Ui_{\rm{odd}}$, for all $n$. 
\end{cor}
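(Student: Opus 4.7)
The plan is to verify the defining condition of ${}_\A \Ui_{\rm{odd}}$ recalled in \S\ref{sec:evevK}, namely that $\dvd{n}$ preserves the Lusztig $\A$-form $L(2\la+1)_\A$ of every odd-weight simple $\U$-module. Starting from the $\vs$-transformed expansion \eqref{t2moddodd2}--\eqref{t2m-1oddodd2} of Theorem~\ref{thm:dvd:oddKappa2} (in which the factor $\LR{\kk;m-c}{c}$ is already placed on the right of all the Chevalley factors), I would apply a generic summand to a canonical-basis vector $F^{(k)} \vodd \in L(2\la+1)_\A$ and check integrality of the result term by term.

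Reading such a summand right-to-left, the factor $\Y^{(a)} = q^{-a^2} E^{(a)} K^{-a}$ lies in $\UAdot$ and so sends $F^{(k)}\vodd$ to an $\A$-linear combination of canonical basis vectors in $L(2\la+1)_\A$, still supported on odd weights. Next, $\LR{\kk;m-c}{c}$ acts as a scalar on the resulting weight vectors; by the identity \eqref{eq:LRhc} (up to the harmless reindexing $1+m-c \leftrightarrow m-c$), that scalar equals a sign times a power of $q$ times $\cbinom{\bullet}{c}$, which is integral by \eqref{cbinom}. The subsequent $K^{b-2m+2c}$ (or $K^{b-2m+2c+1}$) acts as an integer power of $q$, and the final $F^{(b-a)} \in \UAdot$ again preserves $L(2\la+1)_\A$. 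The remaining scalar coefficient $\p^{(2m-b-2c)}(\ka)$ or $\p^{(2m-b-2c-1)}(\ka)$ lies in $\A$ precisely because $\ka$ is an odd $q$-integer, which is the content of Proposition~\ref{prop:fg:odd}.

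Summing over $a, b, c$ shows $\dvd{n} \cdot F^{(k)}\vodd \in L(2\la+1)_\A$ for every $0\le k \le 2\la+1$ and every $\la \ge 0$, and hence $\dvd{n} \cdot L(\mu)_\A \subseteq L(\mu)_\A$ for every odd $\mu$. The second description of ${}_\A \Ui_{\rm{odd}}$ then yields $\dvd{n} \in {}_\A \Ui_{\rm{odd}}$. There is no genuine obstacle here: Theorem~\ref{thm:dvd:oddKappa} already exhibits $\dvd{n}$ as a sum of manifestly $\A$-integral operators on odd-weight modules once the non-trivial integrality $\p^{(n)}(\ka)\in\A$ is in hand, which is exactly why the author calls the corollary immediate; the only bookkeeping worth attention is the matching of weights in \eqref{eq:LRhc}, and this is formally identical to the arguments used for the analogous corollaries on $\dvev{n}$, $\dv{n}$, and $\dvp{n}$ earlier in the paper.
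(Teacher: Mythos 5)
Your argument is correct and is exactly the paper's intended (one-line) proof: the triple-sum expansion of Theorem~\ref{thm:dvd:oddKappa}/\ref{thm:dvd:oddKappa2} exhibits $\dvd{n}$ as an $\A$-combination of operators built from $\Y^{(a)}, F^{(b-a)} \in \UAdot$, powers of $K$, and the factors $\LR{\kk;\bullet}{c}$ which act on odd weights as integral scalars by \eqref{eq:LRhc}, with the only nontrivial input being $\p^{(n)}(\ka)\in\A$ from Proposition~\ref{prop:fg:odd}. No substantive difference from the paper's route; your unwinding of why the corollary is ``immediate'' is accurate.
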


\begin{rem}   
Note $\p_n(1)=0$ for $n\ge 2$, and $\p_0(1)=\p_1(1)=1$. 
The formulae in Theorems~\ref{thm:dvd:oddKappa} and  \ref{thm:dvd:oddKappa2} in the special case for $\ka=1$ recover the formulae in \cite[Theorem~5.1, Proposition~5.3]{BeW18}.
\end{rem}

\begin{example}
The formulae of $\dvd{n}$, for $1\le n\le 3$, in Theorem~\ref{thm:dvd:oddKappa} read as follows. 
\begin{align*}
\dvd{1} 
&= F +\Y + \ka K^{-1},
\\
\dvd{2} 
&= \Y^{(2)} +q^{-1} \Y F + F^{(2)}  + q^{-1} [[\kk;1]] 
 + \ka (q^{-1}K^{-1}F + q^{-1} \Y K^{-1})  +  \frac{\ka^2-1}{[2]} K^{-2},
\\
\dvd{3}  &   = b^{(3)} + q[[\kk;0]]F+ q \Y [[\kk;0]]
\\
&\qquad 
+ \big(q^{-2} \Y^{(2)}K^{-1} +q^{-3} \Y K^{-1}F + q^{-2} K^{-1}F^{(2)} 
+ q [[h;0]] K^{-1} \big) \ka
\\
&\qquad + \frac{(\ka^2-1)}{[2]} (q^{-2}\Y K^{-2} +q^{-2}K^{-2}F) 
+\frac{\ka^3 -\ka}{[3]!} K^{-3}.
\end{align*}
\end{example}

\subsection{The $\imath$-canonical basis for $\Udot_{\text{odd}}$ with odd $\ka$}
  \label{sec:iCB:evwt-oddK}

Recall from \S\ref{sec:iCB:ev} the $\imath$-canonical basis on simple $\U$-modules $L(\mu)$, for $\mu \in \N$. 
  
\begin{thm}  \label{thm:iCB:evwt-oddK}
    $\quad$
\begin{enumerate}
\item
Let $n\in \N$. 
For each integer $\la \gg n$, the element $\dvd{n} \vodd$ is an $\imath$-canonical basis element for $L(2\la+1)$. 
 
 \item
The set $\{\dvd{n} \mid n \in \N \}$ forms the $\imath$-canonical basis for ${\U}^\imath$ (and an $\A$-basis in ${}_\A \Ui_{\rm{odd}}$). 
\end{enumerate}
\end{thm}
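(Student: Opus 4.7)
The plan is to mirror the proofs of Theorems~\ref{thm:iCB:ev} and~\ref{thm:iCB:odd}: apply the $FhE$-ordered expansion of Theorem~\ref{thm:dvd:oddKappa2} to $\vodd$, retain only the $a=0$ summands (since $\Y^{(a)}\vodd=0$ for $a>0$), and invoke~\eqref{eq:LRhc} to evaluate $\LR{\kk;m-c}{c}\vodd$ in terms of $\cbinom{m-\la-c}{c}$. Absorbing also the scalar action $K^{b-2m+2c}\vodd=q^{(2\la+1)(b-2m+2c)}\vodd$ (and the analogous $K^{b-2m+2c+1}$ factor in the odd-index case) produces closed double-sum expressions for $\dvd{n}\vodd$ in the canonical basis $\{F^{(b)}\vodd\}_{0\le b\le 2\la+1}$ of $L(2\la+1)$, whose coefficient of $F^{(b)}\vodd$ has the shape (a $q$-power)$\,\cdot\,\p^{(k)}(\ka)\,\cdot\,\cbinom{m-\la-c}{c}$ with $k=2m-b-2c$ or $k=2m-b-2c-1$ according to parity.

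Next I would isolate the leading $F^{(n)}\vodd$-term and verify the lower-triangularity. The summand $(b,c)=(n,0)$ contributes $F^{(n)}\vodd$ with coefficient $1$, since $\p^{(0)}(\ka)=1$, $\cbinom{m-\la}{0}=1$, and the $q$-power collapses to $0$; summands with $b=n$ and $c\ge 1$ vanish because $\p^{(k)}\equiv 0$ for $k<0$. For $b<n$, one has $\cbinom{m-\la-c}{c}\in\N[q^{-1}]$ once $\la\ge m$, whereas the $\la$-linear $q$-power $q^{(b-2\la-1)k}$ with $k\ge 1$ drives the coefficient into $q^{-1}\Z[q^{-1}]$ for $\la$ sufficiently large, provided the $\la$-independent bound $\deg_q\p^{(k)}(\ka)\le(2\ell-2)k-\binom{k}{2}$ holds (which follows from $\p_k$ being monic of degree $k$ in $x$ and $\ka=[2\ell-1]$ having $q$-degree $2\ell-2$). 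Consequently $\dvd{n}\vodd\in F^{(n)}\vodd+\sum_{b<n}q^{-1}\Z[q^{-1}]\,F^{(b)}\vodd$ for $\la\gg n$; combined with the $\psi_\imath$-invariance of $\dvd{n}\vodd$ inherited from the recursive definition~\eqref{eq:tt:evoddKa}, the characterization (iCB1)-(iCB2) recalled in \S\ref{sec:iCB:ev} identifies $\dvd{n}\vodd$ with the $\imath$-canonical basis element $b^{2\la+1}_n$ of $L(2\la+1)$, which gives part~(1). Part~(2) is then formal: by \cite[\S6]{BW16}, the $\imath$-canonical basis of ${}_\A\Ui_{\rm odd}$ is the unique $\A$-basis asymptotically compatible with the $\imath$-canonical bases on the projective system $\{L(2\la+1)\}_{\la\ge 0}$ under the $\Ui$-homomorphisms $L(2\la+3)\to L(2\la+1)$, and part~(1) supplies exactly this, so $\{\dvd{n}\mid n\in\N\}$ is simultaneously the $\imath$-canonical basis of $\Ui$ and an $\A$-basis of ${}_\A\Ui_{\rm odd}$.

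The one step that is not purely formal is the uniform degree estimate on $\p^{(k)}(\ka)$. This should fall out of a short induction on the recursion~\eqref{eq:pn2}, showing that the subleading-in-$x$ coefficients of $\p_k$ carry bounded $q$-degree so that no cancellation of the leading $\ka^k$ term can occur when $\ka=[2\ell-1]$; it is the direct analog of the estimate for $p^{(n)}(\ka)$ used implicitly in the proofs of Theorems~\ref{thm:iCB:ev} and~\ref{thm:iCB:odd}, and I expect no difficulty beyond careful bookkeeping of $q$-powers.
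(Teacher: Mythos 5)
Your proposal is correct and follows essentially the same route as the paper: apply Theorem~\ref{thm:dvd:oddKappa2} to $\vodd$ so that only the $a=0$ terms survive, evaluate $\LR{\kk;m-c}{c}$ and the $K$-power via \eqref{eq:LRhc} to get the double sums \eqref{t2m:oddoF}--\eqref{t2m-1:oddoF}, deduce $\dvd{n}\vodd\in F^{(n)}\vodd+\sum_{b<n}q^{-1}\Z[q^{-1}]F^{(b)}\vodd$ for $\la\gg n$ from the degree bound on $\p^{(k)}(\ka)$ (the analogue of the estimate used for \eqref{eq:lattice}), and conclude via (iCB1)--(iCB2) and the projective system $\{L(2\la+1)\}$. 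Your explicit degree estimate $\deg_q\p^{(k)}(\ka)\le(2\ell-2)k-\binom{k}{2}$ is exactly the bookkeeping the paper leaves implicit in its phrase ``by a similar argument for \eqref{eq:lattice}.''
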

 
 \begin{proof}
 Let $\la, m \in \N$. Recall $\cbinom{m}{c}$ from \eqref{cbinom}. It follows by a direct computation using Theorem~\ref{thm:dvd:oddKappa2} and \eqref{eq:LRhc} that 
\begin{align}
\dvd{2m} \vodd &= 
\sum_{b=0}^{2m} \sum_{c \geq 0} (-1)^c  q^{c +b(2m-b-2c)}  \p^{(2m-b-2c)}(\kappa) 
F^{(b)} K^{b-2m+2c} \LR{\kk;m-c}{c}  \vodd
\notag
\\
&=  \sum_{b=0}^{2m} \sum_{c \geq 0} q^{-2c^2+c +(b-2\la)(2m-b-2c)}  \p^{(2m-b-2c)}(\kappa) \cbinom{m-\la-c}{c}  F^{(b)}  \vodd.
\label{t2m:oddoF}  
\end{align}
Similarly using Theorem~\ref{thm:dvd:oddKappa2} we have 
\begin{align}
\dvd{2m-1} \vodd &=  
\sum_{b=0}^{2m-1} \sum_{c \geq 0} 
(-1)^c q^{-c+ b(2m-b-2c-1)} \p^{(2m-b-2c-1)}(\kappa) F^{(b)} K^{b-2m+2c+1}   \LR{\kk;m-c}{c} \vodd
\notag
\\
&=  \sum_{b=0}^{2m-1} \sum_{c \geq 0} q^{-2c^2-c +(b-2\la)(2m-b-2c+1)} \p^{(2m-b-2c+1)}(\kappa) \cbinom{m-\la-c}{c} F^{(b)} \vodd. 
\label{t2m-1:oddoF} 
\end{align}

By a similar argument for \eqref{eq:lattice}, using \eqref{t2m:oddoF}--\eqref{t2m-1:oddoF}  we obtain
 \begin{align*}
 \dvd{n} \vodd  & \in F^{(n)}  \vodd + \sum_{b<n}q^{-1} \Z[q^{-1}] F^{(b)} \vodd,
 \qquad \text{ for } \la \gg n.
 \end{align*}
 
The second statement follows now from the definition of the $\imath$-canonical basis for $\Ui$ using
the projective system $\{ L(2\la+1) \}_{\la\ge 0}$; cf. \cite[\S6]{BW18b}. 
 \end{proof}

\subsection{Proof of Theorem~\ref{thm:dvd:oddKappa} }
  \label{subsec:proof:dvd:oddKappa}
  
We prove the formulae for $\dvd{n}$ by induction on $n$, in two separate cases (1)--(2) below. The base cases when $n=1,2$ are clear.

(1) We shall prove the formula \eqref{t2moddodd} for $\dvd{2m}$, assuming the formula \eqref{t2m-1oddodd} for $\dvd{2m-1}$.

Recall $[2m] \dvd{2m} = t \cdot \dvd{2m-1}$, and $t= F +\Y +\ka K^{-1}$. 
Let us compute 
\[
I=\Y \dvd{2m-1},
\qquad
II=F \dvd{2m-1},
\qquad
III=\ka K^{-1} \dvd{2m-1}.
\]

We compute
\begin{align*}
I 
&= 
\sum_{b=0}^{2m-1} \sum_{a=0}^{b} \sum_{c \geq 0} 
q^{\binom{2c}{2} - b(2m-b-2c-1) -a(b-a)} [a+1] \p^{(2m-b-2c-1)}(\kappa)
\\
&\qquad\qquad\qquad\quad \cdot \Y^{(a+1)}  \LR{\kk;2-m}{c} K^{b-2m+2c+1} F^{(b-a)}
\\
 &= 
\sum_{b=0}^{2m} \sum_{a=0}^{b} \sum_{c \geq 0} 
q^{\binom{2c}{2} - (b-1)(2m-b-2c) -(a-1)(b-a)} [a] \p^{(2m-b-2c)}(\kappa)
\\
&\qquad\qquad\qquad\quad \cdot \Y^{(a)}  \LR{\kk;2-m}{c} K^{b-2m+2c} F^{(b-a)}. 
\end{align*}
where the last equation is obtained by shifting indices $a\to a-1$, $b\to b-1$.
Using \eqref{FYn} we  have
\begin{align*}
II  
&=\sum_{b=0}^{2m-1} \sum_{a=0}^{b} \sum_{c \geq 0} q^{\binom{2c}{2} - b(2m-b-2c-1) -a(b-a)}\p^{(2m-b-2c-1)}(\kappa) \\
  & \qquad \cdot 
  \left( 
   q^{-2a} \Y^{(a)} F + \Y^{(a-1)} \frac{q^{3-3a}K^{-2}-q^{1-a}}{q^2-1}
  \right )
    \LR{\kk;2-m}{c} K^{b-2m+2c+1} F^{(b-a)}
   =II^1 +II^2,
\end{align*}
where $II^1$ and $II^2$ are the two natural summands associated to the plus sign.
By shifting the index $b\to b-1$ and then adding some zero terms, we obtain
\begin{align*}
II^1
&=\sum_{b=0}^{2m} \sum_{a=0}^{b} \sum_{c \geq 0} q^{\binom{2c}{2} - (b+1)(2m-b-2c) -a(b-a+1)} [b-a] \p^{(2m-b-2c)}(\kappa)  \\
&\qquad\qquad\qquad \cdot \Y^{(a)}  \LR{\kk;1-m}{c} K^{b-2m+2c} F^{(b-a)}.
\end{align*}
By shifting the indices $a\to a+1$, $b\to b+1$ and then adding some zero terms, we also obtain
\begin{align*}
II^2 
&=\sum_{b=0}^{2m} \sum_{a=0}^{b} \sum_{c \geq 0} q^{\binom{2c-2}{2} - (b+1)(2m-b-2c) -(a+1)(b-a)}\p^{(2m-b-2c)}(\kappa) \\
  & \qquad\qquad\qquad \cdot 
 \Y^{(a)} \frac{q^{-3a}K^{-2}-q^{-a}}{q^2-1}
    \LR{\kk;2-m}{c-1} K^{b-2m+2c} F^{(b-a)}.
\end{align*}
By the identity \eqref{eq:pn2div} we have
\begin{align*}
III 
&= \sum_{b=0}^{2m-1} \sum_{a=0}^{b} \sum_{c \geq 0} q^{\binom{2c}{2} - b(2m-b-2c-1) -a(b-a)-2a}
 \ka \cdot \p^{(2m-b-2c-1)}(\kappa)
\\ 
&\qquad\qquad\qquad\quad \cdot \Y^{(a)}  \LR{\kk;2-m}{c} K^{b-2m+2c} F^{(b-a)}
\\
&= \sum_{b=0}^{2m} \sum_{a=0}^{b} \sum_{c \geq 0} q^{\binom{2c}{2} - b(2m-b-2c-1) -a(b-a)-2a}
[2m-b-2c]  
\\
&\qquad\qquad\qquad\quad \cdot \p^{(2m-b-2c)} \Y^{(a)}  \LR{\kk;2-m}{c} K^{b-2m+2c} F^{(b-a)}
\\
&\quad - \sum_{b=0}^{2m} \sum_{a=0}^{b} \sum_{c \geq 0} 
 q^{\binom{2c-2}{2} -(b+2)(2m-b-2c+1) -a(b-a)-2a +2}
[2m-b-2c-1]  
\\
&\qquad\qquad\qquad\quad \cdot 
\p^{(2m-b-2c)} \Y^{(a)}  \LR{\kk;2-m}{c-1} K^{b-2m+2c-2} F^{(b-a)}.
\end{align*}
(Note that we have shifted the index $c\to c-1$ in the last summand above.)

Collecting the formulae for $I, II^1, II^2, III$ gives us
\[
t \cdot \dvd{2m-1} = \sum_{0 \le a \le b \le 2m}  \sum_{c\ge 0} 
\p^{(2m-b-2c)}(\kappa) \Y^{(a)} \mf H_{a,b,c}  K^{b-2m+2c} F^{(b-a)}, 
\]
where 
\begin{align*}
\mf H_{a,b,c} := 
& \; q^{\binom{2c}{2} - (b-1)(2m-b-2c) -(a-1)(b-a)} [a]  \LR{\kk;2-m}{c}  
\\
&+ q^{\binom{2c}{2} - (b+1)(2m-b-2c) -a(b-a+1)} [b-a] \LR{\kk;1-m}{c}  
\\
&+ q^{\binom{2c-2}{2} - (b+1)(2m-b-2c) -(a+1)(b-a)} %
   \frac{q^{-3a}K^{-2}-q^{-a}}{q^2-1} \LR{\kk;2-m}{c-1}   
\\%
&+ q^{\binom{2c}{2} - b(2m-b-2c-1) -a(b-a)-2a}[2m-b-2c] \LR{\kk;2-m}{c}  
\\
&- q^{\binom{2c-2}{2} -(b+2)(2m-b-2c+1) -a(b-a)-2a +2}
[2m-b-2c-1] \LR{\kk;2-m}{c-1} K^{-2},  
\end{align*}

Recall $[2m] \dvd{2m} = t \cdot \dvd{2m-1}$. To prove the formula \eqref{t2moddodd} for $\dvd{2m}$,  by the PBW basis theorem and the inductive assumption it suffices to prove the following identity, for all $a,b,c$:
\begin{align}  \label{eq:Hoo}
\mathfrak H_{a,b,c} = q^{\binom{2c}{2}-2c -b(2m-b-2c)-a(b-a)} [2m] \LR{\kk;2-m}{c}.
\end{align}  
Thanks to $q^{2m-a} [a] -[2m] =q^{-a} [a-2m]$, we can combine the RHS\eqref{eq:Hoo} with the first summand of the LHS\eqref{eq:Hoo}.
Hence, after canceling out the $q$-powers $q^{\binom{2c}{2}-2c -b(2m-b-2c)-a(b-a)-a}$ on both sides, we see that \eqref{eq:Hoo} is equivalent to the following identity, for all $a, b, c$:
\begin{equation}  \label{ABCD2}
\mathfrak A+\mathfrak B+\mathfrak C+\mathfrak D_1 +\mathfrak D_2=0,
\end{equation}
where  
\begin{align*}  
\mathfrak A &=  [a-2m] \LR{\kk;2-m}{c},
  \\
\mathfrak B &=  q^{b+4c-2m} [b-a] \LR{\kk;1-m}{c},
\\
\mathfrak C  &=  q^{2a-2m+3} \frac{q^{-3a}K^{-2}-q^{-a}}{q^2-1} \LR{\kk;2-m}{c-1},
\\
\mathfrak D_1 &=  q^{2c+b-a} [2m-b-2c] \LR{\kk;2-m}{c},
   \\
\mathfrak D_2 &= - q^{2c-4m+b-a+3} [2m-b-2c-1] \LR{\kk;2-m}{c-1} K^{-2}.
\end{align*}

Let us prove the identity \eqref{ABCD2}. Using \eqref{eq:commLR}, 
we can write $\mathfrak B=\mathfrak B_1+\mathfrak B_2$, where 
\begin{align*}
\mathfrak B_1 &=  q^{b+4c-2m} [b-a] \LR{\kk;2-m}{c},
   \quad
\mathfrak B_2 = - q^{b+4c-6m+4} [b-a] \LR{\kk;2-m}{c-1} K^{-2}.
\end{align*}
 Noting that  
\[
\frac{q^{-3a}K^{-2}-q^{-a}}{q^2-1} = q^{4m-4c-3a-4}\frac{( q^{4c-4m+4}K^{-2}-q^2)}{q^2-1}
 +  q^{2m-2c-2a-2} [2m-2c-a-1],
 \] 
we rewrite $\mf C=\mf C_1+\mf C_2$, where 
\begin{align*}
\mf C_1 &= q^{2m-a-2c-2} [2c] \LR{\kk;2-m}{c},
   \qquad
\mf C_2 =   q^{1-2c} [2m-2c-a-1] \LR{\kk;2-m}{c-1}.
\end{align*}

A direct computation gives us  
\begin{align*}
\mf A+\mf B_1+\mf C_1+\mf D_1 
&=  (1-q^{-2}) [2c] [2m-2c-a-1] p^{(2m-b-2c)}(\kappa)   \LR{\kk;2-m}{c},
\\
\mf C_2 +(\mf B_2+\mf D_2)  
&= \mf C_2 -  q^{2c-4m+3} [2m-2c-a-1]  p^{(2m-b-2c)}(\kappa)   \LR{\kk;2-m}{c-1} K^{-2}
\\
&=  - (1-q^{-2}) [2c] [2m-2c-a-1] p^{(2m-b-2c)}(\kappa)   \LR{\kk;2-m}{c}.
\end{align*}
Summing up these two equations, we have $\mf A+\mf B+\mf C+\mf D_1+\mf D_2=0,$ whence \eqref{ABCD2}, completing Step~(1). 

\vspace{3mm}

(2) Assuming the formulae for $\dvd{n}$ with $n\le 2m$, we shall now prove the following formula for $\dvd{2m+1}$ (obtained from \eqref{t2m-1oddodd} with $m$ replaced by $m+1$):
\begin{align}
\dvd{2m+1} &= 
\sum_{b=0}^{2m+1} \sum_{a=0}^{b} \sum_{c \geq 0} 
q^{\binom{2c}{2} -b(2m-b-2c+1) -a(b-a)} p^{(2m-b-2c+1)}(\kappa) 
\label{t2m+1oddodd}
\\
&\qquad\qquad\qquad\quad \cdot  \Y^{(a)}  \LR{\kk;1-m}{c} K^{b-2m+2c-1} F^{(b-a)}. 
\notag
\end{align}
We first need to compute $t \cdot \dvd{2m}$, where we recall $t= F +\Y  +\ka K^{-1}$ and $\dvd{2m}$ from \eqref{t2moddodd}. We shall compute 
\[
\texttt I=\Y  \dvd{2m},
\qquad
\texttt{II}=F \dvd{2m},
\qquad
\texttt{III} =\ka K^{-1} \dvd{2m}, 
\]
respectively. First we have
\begin{align*}
\texttt{I} 
&=\sum_{b=0}^{2m} \sum_{a=0}^{b} \sum_{c \geq 0} q^{\binom{2c}{2}-2c -b(2m-b-2c)-a(b-a)} 
[a+1] \p^{(2m-b-2c)}(\kappa)
  \\
&\qquad \qquad \qquad\quad \cdot \Y^{(a+1)}  \LR{\kk;2-m}{c} K^{b-2m+2c} F^{(b-a)}
\\ %
&=\sum_{b=0}^{2m+1} \sum_{a=0}^{b} \sum_{c \geq 0} 
q^{\binom{2c}{2}-2c -(b-1)(2m-b-2c+1)-(a-1)(b-a)}  [a] \p^{(2m-b-2c+1)}(\kappa)
  \\
&\qquad \qquad \qquad\quad \cdot \Y^{(a)}  \LR{\kk;2-m}{c} K^{b-2m+2c-1} F^{(b-a)},
\end{align*}
where the last equation is obtained by shifting the indices $a\to a-1$, $b\to b-1$. We also have
\begin{align*}
\texttt{II}  
&= \sum_{b=0}^{2m} \sum_{a=0}^{b} \sum_{c \geq 0} 
q^{\binom{2c}{2}-2c -b(2m-b-2c)-a(b-a)} \p^{(2m-b-2c)}(\kappa)
  \\
&\qquad    \cdot 
\left( 
   q^{-2a} \Y^{(a)} F + \Y^{(a-1)} \frac{q^{3-3a}K^{-2}-q^{1-a}}{q^2-1}
  \right )
 \LR{\kk;2-m}{c} K^{b-2m+2c} F^{(b-a)}
=\texttt{II}_1 +\texttt{II}_2,
\end{align*}
where $\texttt{II}_1$ and $\texttt{II}_2$ are the two natural summands associated to the plus sign.
By shifting the index $b\to b-1$, we further have
\begin{align*}
\texttt{II}_1
&=\sum_{b=0}^{2m+1} \sum_{a=0}^{b} \sum_{c \geq 0} 
q^{\binom{2c}{2}-2c -(b+1)(2m-b-2c+1) -a(b-a+1)} [b-a] \p^{(2m-b-2c+1)}(\kappa)
\\
&\qquad \qquad \qquad\quad \cdot
 \Y^{(a)}  \LR{\kk;1-m}{c} K^{b-2m+2c-1} F^{(b-a)}. 
\end{align*}
By shifting the indices $a\to a+1$, $b\to b+1$ and $c\to c-1$, we further have
\begin{align*}
\texttt{II}_2
&=\sum_{b=0}^{2m+1} \sum_{a=0}^{b} \sum_{c \geq 0} 
q^{\binom{2c-2}{2}-2c -(b+1)(2m-b-2c+1)-(a+1)(b-a)+2} \p^{(2m-b-2c+1)}(\kappa)
  \\
&\qquad    \cdot  \Y^{(a)} \frac{q^{-3a}K^{-2}-q^{-a}}{q^2-1}
 \LR{\kk;2-m}{c-1} K^{b-2m+2c-1} F^{(b-a)}.
\end{align*}
Using the identity \eqref{eq:pn2div} we also compute
\begin{align*}
 \texttt{III} =
& \sum_{b=0}^{2m} \sum_{a=0}^{b} \sum_{c \geq 0} 
q^{\binom{2c}{2}-2c -b(2m-b-2c)-a(b-a)-2a} \ka \cdot \p^{(2m-b-2c)}(\kappa)
  \\
& \qquad\quad \cdot \Y^{(a)}  \LR{\kk;2-m}{c} K^{b-2m+2c-1} F^{(b-a)}
  \\%
= & \sum_{b=0}^{2m+1} \sum_{a=0}^{b} \sum_{c \geq 0} 
q^{\binom{2c}{2}-2c -b(2m-b-2c)-a(b-a)-2a} [2m-b-2c+1]  
  \\
& \qquad\quad \cdot \p^{(2m-b-2c+1)}(\kappa) \Y^{(a)}  \LR{\kk;2-m}{c} K^{b-2m+2c-1} F^{(b-a)}
\\
& -  \sum_{b=0}^{2m+1} \sum_{a=0}^{b} \sum_{c \geq 0} q^{\binom{2c-2}{2}-2c -(b+2)(2m-b-2c+2)-a(b-a)-2a+4} 
 [2m-b-2c]  
  \\
& \qquad\quad \cdot \p^{(2m-b-2c+1)}(\kappa) \Y^{(a)}  \LR{\kk;2-m}{c-1} K^{b-2m+2c-3} F^{(b-a)}.
\end{align*}
(Note that we have shifted the index $c\to c-1$ in the last summand above.)

Collecting the formulae for $\texttt{I}, \texttt{II}_1, \texttt{II}_2, \texttt{III}$, we obtain an expression of the form
\[
t \cdot \dvd{2m} = 
 \sum_{0 \le a \le b \le 2m+1}  \sum_{c\ge 0}  
 \p^{(2m-b-2c+1)}(\kappa) \Y^{(a)} \mf L_{a,b,c} K^{b-2m+2c-1} F^{(b-a)},
\]
where
\begin{align*}
\mf L_{a,b,c} :=& \;
q^{\binom{2c}{2}-2c -(b-1)(2m-b-2c+1)-(a-1)(b-a)}  [a]  \LR{\kk;2-m}{c}
\\
&+q^{\binom{2c}{2}-2c -(b+1)(2m-b-2c+1) -a(b-a+1)} [b-a] \LR{\kk;1-m}{c}
\\
&+q^{\binom{2c-2}{2}-2c -(b+1)(2m-b-2c+1)-(a+1)(b-a)+2}  
 \frac{q^{-3a}K^{-2}-q^{-a}}{q^2-1} \LR{\kk;2-m}{c-1}
\\
&+q^{\binom{2c}{2}-2c -b(2m-b-2c)-a(b-a)-2a} [2m-b-2c+1] \LR{\kk;2-m}{c}
\\
&-q^{\binom{2c-2}{2}-2c -(b+2)(2m-b-2c+2)-a(b-a)-2a+4} 
 [2m-b-2c] \LR{\kk;2-m}{c-1} K^{-2}.
\end{align*}
On the other hand, using \eqref{t2m-1oddodd} (with an index shift $c\to c-1$) and \eqref{t2m+1oddodd} we write 
\begin{align*}
[2m+1] & \dvd{2m+1} +[2m] \dvd{2m-1} 
\\
& =\sum_{0 \le a \le b \le 2m+1} \sum_{c \geq 0} 
\p^{(2m-b-2c+1)}(\kappa) \Y^{(a)} \mf R_{a,b,c} K^{b-2m+2c-1} F^{(b-a)}, 
\end{align*}
where  
\begin{align*}
\mf R_{a,b,c} :=&
q^{\binom{2c}{2} -b(2m-b-2c+1) -a(b-a)} [2m+1]  \LR{\kk;1-m}{c}  
\\
&\quad + q^{\binom{2c-2}{2} - b(2m-b-2c+1) -a(b-a)} [2m]  \LR{\kk;2-m}{c-1}.
\end{align*}

To prove the formula \eqref{t2m+1oddodd} for $\dvd{2m+1}$, it suffices to show that, for all $a,b,c$, 
\begin{equation}
  \label{L=R2}
 \mf L_{a,b,c}=\mf R_{a,b,c}.
 \end{equation}
Canceling the $q$-powers $q^{\binom{2c}{2}-2c -(b-1)(2m-b-2c+1)-(a-1)(b-a)}$  on both sides, we see that the identity \eqref{L=R2} is equivalent to the following identity, for all $a,b,c$:
\begin{align}
  \label{l=r2}
 \begin{split}
[a]  \LR{\kk;2-m}{c} 
&+q^{4c-4m+b-2} [b-a] \LR{\kk;1-m}{c}
\\
&+q^{2a-4m+3} \frac{q^{-3a}K^{-2}-q^{-a}}{q^2-1} \LR{\kk;2-m}{c-1}
\\
&+q^{2c-2m+b-a-1} [2m-b-2c+1] \LR{\kk;2-m}{c}
\\
&-q^{2c-6m+b-a+2} [2m-b-2c] \LR{\kk;2-m}{c-1} K^{-2}
 \\
 =& \; q^{4c-2m+a-1} [2m+1]  \LR{\kk;1-m}{c}  
+ q^{a-2m+2} [2m]   \LR{\kk;2-m}{c-1}.
 \end{split}
\end{align}

By combining the second summand of the LHS\eqref{l=r2} with the first summand of the RHS\eqref{l=r2} as well as combining the third summand of the LHS with the second summand of the RHS, the identity \eqref{l=r2} is reduced to the following equivalent identity, for all $a, b, c$:
\begin{equation}  \label{WXYZ2}
\mf W+ \mf X+\mf Y+\mf Z_1+\mf Z_2=0,
\end{equation}
where 
\begin{align*}
\mf W&= [a]  \LR{\kk;2-m}{c},
\\
\mf X&= q^{4c-2m+b-1} [b-a-2m-1] \LR{\kk;1-m}{c},
\\
\mf Y&=  \frac{q^{3-a-4m}K^{-2}-q^{3+a}}{q^2-1} \LR{\kk;2-m}{c-1},  
\\
\mf Z_1&= q^{2c-2m+b-a-1} [2m-b-2c+1] \LR{\kk;2-m}{c},
\\
\mf Z_2&= -q^{2c-6m+b-a+2} [2m-b-2c] \LR{\kk;2-m}{c-1} K^{-2}.
\end{align*}
Let us finally prove the identity \eqref{WXYZ2}. Using \eqref{eq:commLR}, we can write $\mf X=\mf X_1+\mf X_2$, where 
\begin{align*}
\mf X_1 &=q^{4c-2m+b-1} [b-a-2m-1] \LR{\kk; 2-m}{c},
   \\
\mf X_2 &= - q^{4c-6m+b+3} [b-a-2m-1] \LR{\kk; 2-m}{c-1} K^{-2}.
\end{align*}
Noting that  
\[
\frac{q^{3-a-4m}K^{-2}-q^{3+a}}{q^2-1}
= q^{-4c-a-1}\frac{( q^{4c-4m+4}K^{-2}-q^2)}{q^2-1}
 +  q^{1-2c} [-2c-a-1],
 \] 
we rewrite $\mf Y=\mf Y_1+\mf Y_2$, where
\begin{align*}
\mf Y_1 &=q^{-2c-a-2} [2c] \LR{\kk; 2-m}{c},
   \qquad
\mf Y_2 = -q^{1-2c} [2c+a+1] \LR{\kk; 2-m}{c-1}.
\end{align*}
A direct computation shows that
\begin{align*}
\mf W+\mf X_1+\mf Y_1+\mf Z_1 &=
-(1-q^{-2}) [2c+a+1] [2c]  \LR{\kk; 2-m}{c},
\\
(\mf X_2+\mf Z_2) +\mf Y_2 
&= q^{-2m-a+1} [2c+a] \LR{\kk; 2-m}{c-1} K^{-2} +\mf Y_2
\\
&= (1-q^{-2}) [2c+a+1] [2c]  \LR{\kk; 2-m}{c}.
\end{align*}
Summing up these two equations, we obtain $\mf W+\mf X+\mf Y+\mf Z_1+\mf Z_2=0$, whence \eqref{WXYZ2}, completing Step ~(2).

The proof of Theorem~\ref{thm:dvd:oddKappa} is completed. \qed

\appendix
\section{On the $\imath$-divided powers  for generic $\ka$ }
  \label{sec:genK}

In this appendix we provide closed formulae for the second $\imath$-divided power for $\Ui$ with an arbitrary parameter $\overline{\ka} =\ka\in \A$. 

For $\overline{\ka} =\ka\in \A$, we can write 
\[
\ka =\sum_{i\in\Z} c_i q^i,
\quad \text{ where } c_i=c_{-i}, \forall i.
\] 
Define 
\begin{equation}
  \label{eq:mm}
\m= \m(\ka) = \sum_{i\in \Z}(-1)^i c_{2i}.
\end{equation}

\begin{lem}
 \label{lem:A}
Let $\overline{\ka} =\ka\in \A$. We have $\frac{1}{[2]} (\ka - \m) \in \A$. 
\end{lem}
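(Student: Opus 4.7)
The plan is to reformulate divisibility by $[2] = q + q^{-1}$ in $\A$ as vanishing at a specific value of $q$. Since $[2] = q^{-1}(q^2+1)$ and $q$ is a unit in $\A = \Z[q,q^{-1}]$, divisibility by $[2]$ is equivalent to divisibility by the irreducible polynomial $q^2+1$. For any $f \in \A$, one may multiply by a sufficiently high power of $q$ to land in $\Z[q]$, and then Gauss's lemma applied to the monic integer polynomial $q^2+1$ shows that $(q^2+1) \mid f$ in $\A$ if and only if $f|_{q=i} = 0$, where $i = \sqrt{-1}$ (vanishing at $q=-i$ is automatic since $f$ has integer coefficients).

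The core computation is thus the evaluation $(\ka - \m)|_{q=i}$. Using $i^{2j} = (-1)^j$ and $i^{2j+1} = (-1)^j i$, splitting the expansion $\ka = \sum_{j\in\Z} c_j q^j$ into even- and odd-index parts yields
\[
\ka\big|_{q=i} \;=\; \sum_{j \in \Z} (-1)^j c_{2j} \;+\; i \sum_{j \in \Z} (-1)^j c_{2j+1}.
\]
By the definition \eqref{eq:mm}, the first sum is precisely $\m$. For the second sum, the bar-invariance $\overline{\ka} = \ka$, i.e.\ $c_j = c_{-j}$, is the key input: the reindexing $j \mapsto -j-1$ converts $(-1)^j c_{2j+1}$ into $(-1)^{-j-1} c_{-2j-1} = -(-1)^j c_{2j+1}$, so the sum equals its own negative and hence vanishes. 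Therefore $(\ka - \m)|_{q=i} = 0$, and the lemma follows.

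I do not foresee any real obstacle. The only step that warrants a remark is the passage from ``vanishes at $q = i$'' to ``divisible by $q^2 + 1$ with integer Laurent coefficients'', which is standard via Gauss's lemma since $q^2 + 1$ is monic with integer coefficients; everything else reduces to an elementary reindexing of the defining sum of $\ka$ using bar-invariance.
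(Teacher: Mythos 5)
Your proof is correct, but it takes a genuinely different route from the paper. The paper argues directly and term by term: writing $\ka=c_0+\sum_{i\ge 1}c_i(q^i+q^{-i})$ via $c_i=c_{-i}$, it observes that $q^{2i+1}+q^{-2i-1}\in[2]\A$ and $q^{2i}+q^{-2i}-2(-1)^i\in[2]\A$, so that $\ka-\m$ is a $\Z$-linear combination of such elements. You instead convert divisibility by $[2]$ (equivalently by $q^2+1$, since $q$ is a unit) into vanishing at $q=\sqrt{-1}$, justified by division with remainder against the monic polynomial $q^2+1$ over $\Z$, and then check $(\ka-\m)|_{q=i}=0$: the even-index part evaluates to $\m$ by definition, and the odd-index part vanishes because the reindexing $j\mapsto -j-1$ together with $c_j=c_{-j}$ makes that sum equal to its own negative. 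Both arguments are complete and elementary; yours is slightly more conceptual in that the single evaluation criterion explains \emph{why} the paper's two ad hoc identities hold (each displayed element vanishes at $q=i$), at the cost of the small Gauss-type divisibility step, while the paper's is entirely self-contained within $\A$ and needs no passage to $\Z[\sqrt{-1}]$.
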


\begin{proof}
Note that
$q^{2i+1} +q^{-2i-1} \in [2] \A$ and $q^{2i} +q^{-2i} -2(-1)^i \in [2] \A$ for all $i\in \Z$. The lemma follows. 
\end{proof}

Recall from \eqref{eq:t} that $t = F +\Y + \ka K^{-1}$.  We shall denote the second $\imath$-divided power in ${}_\A\Ui_{\rm ev}$ (and respectively, ${}_\A\Ui_{\rm odd}$) by $\dvp{2}$ (and respectively, $\dvd{2}$), which is by definition the $\imath$-canonical basis element with a leading term $F^{(2)}$ (see \cite{BW18b}). The following formula was obtained with help from Huanchen Bao. 

\begin{prop}
 \label{prop:tt}
Let $\overline{\ka} =\ka\in \A$.  For $\la \in \Z$, we have 
\[
\dvp{2}   =\frac{t^2- \m^2}{[2]}, 
\qquad 
\dvd{2}  =\frac{t^2-1 + \m^2}{[2]}.    
\]
\end{prop}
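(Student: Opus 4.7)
The plan is to verify that the two proposed expressions are the unique $\psi_\imath$-invariant elements of $\Ui$ satisfying the characterizing asymptotic property of the second $\imath$-canonical basis element on finite-dimensional simple modules, exactly as in the proof of Theorem~\ref{thm:iCB:ev}(1). The $\psi_\imath$-invariance is immediate since $t$, $[2]$, and $\m(\kappa)\in\Z$ are all fixed by $\psi_\imath$. The substantive content is therefore to compute the action on a highest weight vector $v^+_\mu$ and check that for $\la \gg 0$,
\[
\dvp{2} v^+_{2\la},\ \dvd{2} v^+_{2\la+1} \ \in\ F^{(2)} v^+_\mu + \sum_{b<2} q^{-1}\Z[q^{-1}]\, F^{(b)} v^+_\mu \cap L(\mu)_\A,
\]
which, by the projective-system characterization of \cite[\S6]{BW16}, identifies them with the $\imath$-canonical basis elements $\dvp{2}$ and $\dvd{2}$.

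I would first compute $t^2 v^+_\mu$ directly, using $\Y v^+_\mu = 0$, $K^{-1}v^+_\mu = q^{-\mu} v^+_\mu$, $K^{-1}F = q^{2}F K^{-1}$, and $EF v^+_\mu = [\mu] v^+_\mu$. A short calculation gives
\[
t^2 v^+_\mu = [2]F^{(2)}v^+_\mu + q^{1-\mu}[2]\kappa\cdot F v^+_\mu + \bigl(q^{1-\mu}[\mu] + q^{-2\mu}\kappa^2\bigr) v^+_\mu.
\]
Dividing by $[2]$ and subtracting $\m^2/[2]$ (respectively $(1-\m^2)/[2]$) gives the explicit action of $\dvp{2}$ on $v^+_{2\la}$ (resp.\ of $\dvd{2}$ on $v^+_{2\la+1}$); the coefficient of $F v^+_\mu$ becomes $\kappa q^{1-\mu}$, which for $\mu\gg 0$ clearly lies in $q^{-1}\Z[q^{-1}]$ since $\kappa\in\A$ has bounded degree.

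The main obstacle, and the only nontrivial point, is the integrality of the scalar coefficient of $v^+_\mu$, namely showing
\[
q^{1-2\la}[2\la] + q^{-4\la}\kappa^2 - \m^2 \ \in\ [2]\,\A \quad\text{and}\quad q^{-2\la}[2\la+1] + q^{-4\la-2}\kappa^2 - 1 + \m^2 \ \in\ [2]\,\A.
\]
For this I would invoke the criterion that $f \in \A$ is divisible by $[2]=q+q^{-1}$ iff $f|_{q=\pm i}=0$. The key input is the identity $\kappa|_{q=\pm i} = \m$, which is the true content of the definition \eqref{eq:mm}: writing $\kappa = \sum c_j q^j$ with $c_j=c_{-j}$ and pairing symmetric terms, one has $i^{2k+1}+i^{-(2k+1)}=0$ while $i^{2k}+i^{-2k} = 2(-1)^k$, so only the even-indexed $c_{2k}$ survive and contribute exactly $\m$. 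Combined with the elementary evaluations $[2\la]|_{q=i}=0$ and $q^{-2\la}[2\la+1]|_{q=\pm i}=1$ and $q^{-4\la-2}|_{q=\pm i}=-1$, this shows both numerators vanish at $q=\pm i$ and are therefore in $[2]\A$. Finally, since each numerator has top $q$-degree equal to $0$ (with leading coefficient $-\m^2$ or $\m^2-1$) for $\la\gg 0$, division by $[2]$ lowers the top degree by one, placing the quotient in $q^{-1}\Z[q^{-1}]$; this delivers the asymptotic leading-coefficient condition and completes the identification.
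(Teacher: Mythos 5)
Your proof is correct and follows the same overall strategy as the paper's: compute $t^2 v^+_\mu$ on a highest weight vector, verify that the scalar coefficient divided by $[2]$ is integral, and then check the $q^{-1}\Z[q^{-1}]$ asymptotics to invoke the characterization (iCB1)--(iCB2) and the projective system of \cite[\S6]{BW16}. Your computation of $t^2 v^+_\mu$ agrees with \eqref{eq:t22}. The one place where you genuinely diverge is the integrality step. The paper splits the numerator into three summands, e.g.\ $q^{1-2\la}[2\la] + q^{-4\la}(\ka^2-\m^2) + (q^{-4\la}-1)\m^2$, and checks each is divisible by $[2]$ using Lemma~\ref{lem:A} (that $(\ka-\m)/[2]\in\A$). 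You instead use the criterion that $f\in\A$ is divisible by $[2]=q^{-1}(q-i)(q+i)$ iff $f|_{q=\pm i}=0$, together with the identity $\ka|_{q=\pm i}=\m$, which is exactly the content of Lemma~\ref{lem:A} repackaged; since the relevant $[2]$ is monic up to a unit of $\A$, Gauss-type divisibility makes this legitimate over $\Z[q,q^{-1}]$. Your route is a clean, uniform way to see the same divisibility and arguably explains \emph{why} the definition \eqref{eq:mm} of $\m$ is the right one; the paper's route has the advantage of exhibiting the quotient explicitly as a sum of manifestly integral terms. One small slip: the degree-zero coefficients of the two numerators are $1-\m^2$ and $\m^2$ (you forgot the contribution $+1$ from $q^{1-\mu}[\mu]$), not $-\m^2$ and $\m^2-1$; this is immaterial, since all that is needed for the quotient to land in $q^{-1}\Z[q^{-1}]$ is that the numerator lies in $\Z[q^{-1}]$ with top degree at most $0$ for $\la\gg 0$.
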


\begin{proof}
A direct computation shows that, for $\mu \in \Z$, 
\begin{align}
  \label{eq:t22}
t^2 /[2] \cdot v^+_\mu 
&= F^{(2)} v^+_\mu +q^{1-\mu} \ka F v^+_\mu +\frac{ \frac{1-q^{-2\mu}}{1-q^{-2}} +q^{-2\mu} \ka^2}{q+q^{-1}} v^+_\mu.
\end{align}
Hence we obtain by using \eqref{eq:t22} that 
\begin{align*}
(t^2-\m^2)/[2] \cdot v^+_{2\la} 
&= F^{(2)} v^+_{2\la} +q^{1-2\la} \ka F v^+_{2\la} +\frac{q^{1-2\la} [2\la] +q^{-4\la} \ka^2 -\m^2}{q+q^{-1}} v^+_{2\la}
\\
&= F^{(2)} v^+_{2\la} +q^{1-2\la} \ka F v^+_{2\la} +\frac{q^{1-2\la} [2\la] +q^{-4\la} (\ka^2 -\m^2)
 + (q^{-4\la} -1) \m^2}{q+q^{-1}} v^+_{2\la}.
\end{align*}
Note that (each of the 3 summands of) the last fraction above lies in $\A$ thanks to Lemma~\ref{lem:A}. 
Hence $(t^2-\m^2)/[2]$ is integral, i.e., it lies in ${}_\A\Ui_{\rm ev}$. 
Moreover, the lower terms above all have coefficients in $q^{-1} \Z[q^{-1}]$ for $\la \gg 0$. Hence it follows by definition \cite{BW18b} that $(t^2-\m^2)/[2]$ is an $\imath$-canonical basis element in ${}_\A\Ui_{\rm ev}$. 

On the other hand, by using \eqref{eq:t22} again we have
\begin{align*}
(t^2-1 +\m^2)/[2] \cdot v^+_{2\la+1} 
&= F^{(2)} v^+_{2\la+1} +q^{1-2\la} \ka F v^+_{2\la+1} 
+\frac{q^{-2\la-1} [2\la] +q^{-4\la-2} \ka^2 +\m^2}{q+q^{-1}} v^+_{2\la+1}
\\
&= F^{(2)} v^+_{2\la+1} +q^{1-2\la} \ka F v^+_{2\la+1} 
\\
&\qquad +\frac{q^{-2\la-1} [2\la] +q^{-4\la-2} (\ka^2 -\m^2)
 + (q^{-4\la-2} +1) \m^2}{q+q^{-1}} v^+_{2\la+1}.
\end{align*}
Note that (each of the 3 summands of) the fraction above lies in $\A$ thanks to Lemma~\ref{lem:A}. 
Hence $(t^2-1+ \m^2)/[2] \one_{2\la}$ is integral, i.e., it lies in ${}_\A\Ui_{\rm ev}$. 
Moreover, the lower terms above all have coefficients in $q^{-1} \Z[q^{-1}]$ for $\la \gg 0$. Hence it follows by definition \cite{BW18b} that $(t^2-1+\m^2)/[2]$ is an $\imath$-canonical basis element in ${}_\A\Ui_{\rm odd}$. 
\end{proof}

Recall $[\kk;0] =\frac{K^{-2}-1}{q^4-1}$, and $\llbracket\kk;0\rrbracket =\frac{K^{-2}-q^2}{q^4-1}$.
We leave the verification of the following formulae using Proposition~\ref{prop:tt} to the reader. 
\begin{prop}
 \label{prop:t22}
Let $\overline{\ka} =\ka\in \A$.  The following formulae for $\dvp{2}$ and $\dvd{2}$ in $\U$ hold:
\begin{align*}
\dvp{2} 
&= \Y^{(2)} +q^{-1} \Y F + F^{(2)} +q^{-1}\ka K^{-1}F +q^{-1} \ka \Y K^{-1}  
\\
& \qquad\quad 
+ \big( q + (q^3-q)\ka^{2} \big) \frac{K^{-2}-1}{q^4-1}  
+\frac{\ka^2 -\m^2}{[2]},
\\
\dvd{2} 
&= \Y^{(2)} +q^{-1} \Y F + F^{(2)} +q^{-1}\ka K^{-1}F +q^{-1} \ka \Y K^{-1}  
\\
& \qquad\quad 
+ \big( q + (q^3-q)\ka^{2} \big) \frac{K^{-2}-q^2}{q^4-1}  
+\frac{q^2(\ka^2 -\m^2)}{[2]} +q \m^2. 
\end{align*}
\end{prop}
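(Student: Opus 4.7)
The plan is to derive both formulae directly from Proposition~\ref{prop:tt}, which expresses $\dvp{2} = (t^2-\m^2)/[2]$ and $\dvd{2} = (t^2-1+\m^2)/[2]$; the only work left is to expand $t^2$ inside $\U$ and rewrite a few scalar terms.

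First I would expand $t^2 = (F+\Y+\ka K^{-1})^2$ by distributing the nine cross-products and then moving all $F$'s past $\Y$'s and all $K^{-1}$'s to the right, using three commutation rules: $F\Y = q^{-2}\Y F + \kk$ from \eqref{FEk}, $FK^{-1} = q^{-2}K^{-1}F$, and $K^{-1}\Y = q^{-2}\Y K^{-1}$. The symmetric pairs each contribute a factor $(1+q^{-2}) = q^{-1}[2]$, and only the $F\Y$ swap produces the inhomogeneous term $\kk = (K^{-2}-1)/(q^2-1)$. Collecting, one obtains
\[
t^2 = [2]F^{(2)} + [2]\Y^{(2)} + q^{-1}[2]\,\Y F + q^{-1}[2]\ka K^{-1}F + q^{-1}[2]\ka\, \Y K^{-1} + \frac{K^{-2}-1}{q^2-1} + \ka^2 K^{-2}.
\]

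Next, divide by $[2]$. The identity $\frac{1}{[2](q^2-1)} = \frac{q}{q^4-1}$ handles the $\kk$ contribution, while splitting $\ka^2 K^{-2} = \ka^2 (K^{-2}-1) + \ka^2$ together with $\frac{q^3-q}{q^4-1} = \frac{1}{[2]}$ gives
\[
\frac{t^2}{[2]} = \Y^{(2)} + q^{-1}\Y F + F^{(2)} + q^{-1}\ka K^{-1}F + q^{-1}\ka\, \Y K^{-1} + \bigl(q+(q^3-q)\ka^2\bigr)\frac{K^{-2}-1}{q^4-1} + \frac{\ka^2}{[2]}.
\]
Subtracting $\m^2/[2]$ produces the stated formula for $\dvp{2}$ at once.

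For $\dvd{2}$ I would instead add $-1/[2]+\m^2/[2]$ to the displayed expression, then use the identity $\frac{K^{-2}-1}{q^4-1} = \frac{K^{-2}-q^2}{q^4-1} + \frac{1}{q^2+1}$ to convert to the form with $\frac{K^{-2}-q^2}{q^4-1}$, absorbing the shift $\frac{1}{q^2+1}\bigl(q+(q^3-q)\ka^2\bigr) = \frac{1+(q^2-1)\ka^2}{[2]}$ into the residual scalars. Using $\frac{q^2+1}{[2]} = q$, a short scalar computation shows that all remaining constants collapse to $\frac{q^2(\ka^2-\m^2)}{[2]} + q\m^2$, matching the claim. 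The only real obstacle is bookkeeping of $q$-powers; there is no conceptual difficulty beyond the reduction already performed in Proposition~\ref{prop:tt}, which explains why the authors leave the verification to the reader.
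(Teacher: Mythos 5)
Your verification is correct and is precisely the computation the paper leaves to the reader: expand $t^2$ using $F\Y - q^{-2}\Y F = \kk$ and the $K^{-1}$ commutations, divide by $[2]$, and substitute into Proposition~\ref{prop:tt}; I checked the scalar bookkeeping, and indeed $\frac{1+(q^2-1)\ka^2 + \ka^2 - 1 + \m^2}{[2]} = \frac{q^2\ka^2+\m^2}{[2]} = \frac{q^2(\ka^2-\m^2)}{[2]} + q\m^2$. No gaps.
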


\begin{example}
Assume $\ka$ is a $q$-integer, i.e., $\ka =[n]$ for some $n\in \Z$. Then $\m$ in \eqref{eq:mm} becomes 
\begin{align*}
\m = \begin{cases}
0, & \text{ for } n \text{ even}, 
\\
1, & \text{ for } n\equiv 1\pmod 4,
\\
-1, & \text{ for } n\equiv -1\pmod 4. 
\end{cases}
\end{align*}
In this case, the formulae in Propositions~\ref{prop:tt} and \ref{prop:t22} reduce to those obtained in the previous sections. 
\end{example}

It is an open question to find closed formula for general $\imath$-divided powers of $t$ with an arbitrary bar invariant parameter $\ka\in \A$. 

It is also an open question to find closed formulae for the $\imath$-canonical bases for the simple $\U$-modules $L(\la)$, for $\la \in \N$, even for $\ka$ being an arbitrary $q$-integer as in Section~\ref{sec:evevK}-\ref{sec:oddoddK}. The explicit fromulas for such $\imath$-canonical bases for $L(\la)$ were known in \cite{BeW18}  for $\ka=0$ or $1$; in this case, they coincide with the nonzero images of the $\imath$-divided powers acting on  $v^+_\la$. 




\begin{thebibliography}{DWW15} \frenchspacing

\bibitem[BK19]{BK19}
M.~Balagovic and S.~Kolb,
{\em Universal $K$-matrix for quantum symmetric pairs},
J. Reine Angew. Math. {\bf 747} (2019), 299--353, DOI 10.1515/crelle-2016-0012, \href{http://arxiv.org/abs/1507.06276}{arXiv:1507.06276v2}.

\bibitem[BW18a]{BW18a} H. Bao and W. Wang,
{\em  A new approach to Kazhdan-Lusztig theory  of type $B$ via quantum symmetric pairs},   Ast\'erisque {\bf 402} (2018), vii+134 pp,
\href{https://arxiv.org/abs/1310.0103}{arXiv:1310.0103v3}.

\bibitem[BW18b]{BW18b} H. Bao and W. Wang,
{\em  Canonical bases arising from quantum symmetric pairs}, Invent. Math. {\bf 213} (2018), 1099--1177, 
 \href{http://arxiv.org/abs/1610.09271}{arXiv:1610.09271v2}.

\bibitem[BWW18]{BWW18} H. Bao, W. Wang and H. Watanabe,
{\em  Multiparameter quantum Schur duality of type B},  
Proc. AMS  {\bf 146} (2018), 3203--3216,
 \href{http://arxiv.org/abs/1609.01766}{arXiv:1609.01766}.

\bibitem[BeW18]{BeW18} C. Berman and W. Wang,
{\em Formulae of $\imath$-divided powers in ${\bf U}_q(\mathfrak{sl}_2)$}, J. Pure Appl. Algebra {\bf 222} (2018), 2667--2702, 
\href{https://arxiv.org/abs/1703.00602}{arXiv:1703.00602}.

\bibitem[Ko93]{Ko93} 
T. Koornwinder,
{\em  Askey-Wilson polynomials as zonal spherical functions on the $SU(2)$ quantum group},
SIAM J. Math. Anal. {\bf 24} (1993), 795--813.

\bibitem[Le99]{Le99} G. Letzter, 
{\em Symmetric pairs for quantized enveloping algebras}, J. Algebra {\bf 220}  (1999), 729--767.


\bibitem[Lu93]{Lu93} G. Lusztig, {\em Introduction to quantum groups}, 
Modern Birkh\"auser Classics, Reprint of the 1993 Edition,
Birkh\"auser, Boston, 2010.

\end{thebibliography}
\end{document}